\newcommand{\Vol}{\text{Vol}}
\newcommand{\polylog}{\text{polylog}}
\DeclareMathOperator*{\argmax}{argmax}
\DeclareMathOperator{\Var}{Var}
\newtheorem{theorem}{Theorem}[section]
\newtheorem{lemma}{Lemma}[theorem]
\newtheorem{corollary}[theorem]{Corollary}
\newtheorem{claim}{Claim}[theorem]
\newtheorem{definition}{Definition}[section]
\begin{document}

\title{The $k$-Cap Process on Geometric Random Graphs}
\author{ Mirabel Reid\\ mreid48@gatech.edu\\ Georgia Tech \and Santosh S. Vempala\\ vempala@gatech.edu\\ Georgia Tech}
\date{}

\maketitle

\begin{abstract}
The $k$-cap (or $k$-winners-take-all) process on a graph works as follows: in each iteration, a subset of $k$ vertices of the graph are identified as winners; the next round winners are the vertices that have the highest total degree from the current winners, with ties broken randomly. This natural process is a simple model of firing activity and inhibition in the brain and has been found to have desirable robustness properties as an activation function. We study its convergence on directed geometric random graphs in any constant dimension, revealing rather surprising behavior, with the support of the current active set converging to lie in a small ball and the active set itself remaining essentially random within that.    
\end{abstract}

\tableofcontents
\thispagestyle{empty}
\newpage
\pagenumbering{arabic}

\section{Introduction}
	The function $k$-cap, also known as $k$-winners take all, takes $n$ real-valued elements and selects the $k$ elements with the highest values, i.e., it assigns $1$ to those $k$ and $0$ to all others, breaking ties randomly. It has found applications in machine learning, image processing and related fields~\cite{maass2000computational, wang2010analysis, xiao2019enhancing}. It has been proven to be computationally powerful; circuits employing $k$-WTA gates as nonlinearities can approximate arbitrary Boolean functions~\cite{maass2000computational}. Additionally, since the gradient of the $k$-WTA function is undefined at key points, it has been recently proposed as a technique for defending against attacks which use the gradient of a neural network to generate adversarial examples~\cite{xiao2019enhancing}. This property differentiates it from typical activation functions, such as ReLU or tanh. The $k$-cap process has also been proposed as a model of neural firing behavior, a motivation that we will discuss in more detail presently. 
	
	We study the $k$-cap process, which repeatedly applies $k$-cap to the  degrees of a random graph: at each time step $t>0$, the set $A_t$ consists of the $k$ vertices with the highest degree in $A_{t-1}$ (with ties broken randomly). Given this process, some natural questions arise: 
\begin{itemize}
\item  What does the $k$-cap process ``converge" to? 

\item When the process does converge, how quickly does it do so? 
\end{itemize}
To understand these and related questions, we investigate how the $k$-cap $A_t$ at time $t$, evolves as $t\rightarrow \infty$. If $G$ is the complete graph, every vertex has degree $k$ from $A_t$; so, assuming random tie-breaking, all vertices fire with probability $k/n$ at each time step. On the other hand, if $G$ is a sparse graph with a planted $k$-clique $H$, we expect $A_t = H$ to be a fixed point (for $k$ sufficiently large).  These examples indicate that the answers to the above questions depend on the graph structure; there are many classes of random graphs in which no meaningful convergence is expected. For example, we don't expect this process to converge on a directed Erdős–Rényi model of random graphs. Since this early influential model, many interesting variants of random graphs have been proposed, e.g., Power Law Random graphs, Stochastic Block Models, Geometric Random graphs, etc. The last of these seems particularly well-motivated for studying the $k$-cap process. In geometric random graphs, each vertex is assigned a position in a hidden variable space (for example, the cube $[0, 1]^d$). The probability that an edge $\mathbbm{1}_{(x, y)}$ exists in the graph is a function of the hidden variables of the endpoints. By using an edge probability function which decreases with distance in the hidden variable space, this creates subgraphs which are dense and concentrated within a small diameter subset of the space. The hidden variables can correspond to spatial distance, or they can represent similarity in a wider set of features. For example, the geometric random graph model has been used for social networks, where the hidden variable represents a closeness in ``social space" rather than physical distance~\cite{boguna2004models}.  This model has also been studied in the context of transportation networks, communication networks, and networks of neurons~\cite{bullmore2009complex, barthelemy2011spatial}. 

Properties of geometric random graphs have been thoroughly explored; see~\cite{penrose2003random} for comprehensive exposition. In the most common variant of the model, all vertices are placed in a $d$-dimensional space according to some distribution. If the distance between two vertices is less than $r$ (where $r$ is a parameter of the model), they are connected by an edge; otherwise, they are not. We study a \textit{directed, soft} geometric random graph where the edge probability decays exponentially with squared distance, i.e., the Gaussian kernel. This alternative model introduces asymmetry as well as long-range connections, both of which are important for real-life networks.

\paragraph{Motivation from the Brain. } \label{sec:brain-motivation}The network of the brain, called the connectome, is modeled as a sparse directed graph whose nodes represent neurons and whose directed edges represent synaptic connections. It is useful to view the connectome as consisting of many directed subgraphs (also called brain areas) with some connectivity between them. Neurons fire based on the total (weighted) input they sense from other neurons that are currently firing. 
An important and longstanding idea in neuroscience is that of an {\em assembly} of neurons --- a subset of densely interconnected nodes within a brain area which tend to fire together in response to the same input to the brain area~\cite{papadimitriou2020brain,buzsaki2019brain}. Assemblies are created through projection, where an outside stimulus fires (repeatedly), activating a subset of neurons. Two ideas, rooted in experimental findings in neuroscience, lead to the convergence of assembly projection in a random brain graph. The first is inhibition: at each step, the $k$ neurons with the highest total synaptic input are chosen to fire, while the rest are suppressed. The second is plasticity: if a neuron fires immediately following one of its pre-synaptic neighbors, the weight of the edge between them is increased. This causes neurons that `fire together' to `wire together', and strengthens internal connections each time an assembly is activated.
	
Rigorous analysis of the assembly model has thus far been based on a directed Erd\"{o}s-R\'{e}nyi random graph, where each pair of neurons has an equal probability of being connected via a synapse. There are two important ways in which this model departs from observed reality. First, the locations of neurons in the brain and the physical distance between them have a significant impact on the probability of connection. Long axons come with a cost in both material and energy, so neurons tend to prefer to create connections that are close in physical space. The principle of conservation of axonal wiring costs was proposed by Ramon y Cajal in the early 20th century~\cite{ramon1911histology}, and the relationship between distance and connection probability has been confirmed empirically~\cite{bullmore2009complex, cuntz2010one}. Moreover, models that take locality into account are better able to explain statistical deviations of the connectome from the standard random graph model, as observed in experiments~\cite{SongETAL:05}. Second, in the standard random graph model, assemblies are shown to correspond to the firing of $k$ neurons, with most of them in a fixed set of size $(1+o(1))k$ with at most $o(k)$ outside this set. On the other hand, what has been observed is that assemblies represent increased firing activity of a relatively small but significantly larger than $k$ subset of neurons for a period of time~\cite{durstewitz2000neurocomputational,buzsaki2019brain}. An exciting aspect of our investigation is a rigorous explanation of this phenomenon.

\subsection{Main Results}

As a warm-up, we consider the infinite limit, i.e., the {\em continuous} interval $[0,1]$ in one dimension. Then we turn to the discrete setting of graphs, with vertices chosen uniformly from the $d$-dimensional unit cube. While the brain motivation applies directly to $d=2,3$, higher dimension is also relevant and interesting, as vertex location could indicate some set of relevant features (e.g., type of neuron). 

\paragraph{A Continuous Process.}
A natural abstraction of the $k$-cap process on geometric random graphs is to consider what happens when the number of vertices, $n$, goes to infinity. On a finite graph, the input to a discrete vertex $v$ is the sum of its edges from $A_t\subset V=\{1/n, 2/n, \dots, 1\}$. In the infinite limit, we assume that $A_t\subset[0, 1]$ is a set of measure $\alpha$, leading to a corresponding $\alpha$-cap process. The input to a given point $v\in [0,1]$ is the integral of the edge probability function over $A_t$ and the $\alpha$ fraction of points with the highest input will form $A_{t+1}$. We provide a formal definition of this process in Section \ref{sec:cts}.

	This continuous abstraction leads to a clean convergence phenomenon. We find that $A_t$ converges to a single interval of length $\alpha$. The number of steps to convergence depends on how large the derivative of the edge probability function can be. The exact result proven in this paper is stated below.
	
\begin{restatable}{theorem}{thmcts}
	\label{thm:cts_convergence}
	Let $A_0$ be a countable set of intervals in [0,1] and $g$ be the edge probability function. For any differentiable, even, nonnegative and integrable function $g:[0,1]\rightarrow \mathbb{R}_+$ with $g'(x) < 0$ for all $x > 0$, the $\alpha$-cap process converges to a single interval of width $\alpha$. Moreover, the number of steps to convergence is 
	\[
	O\left(\frac{\max_{[0,1]}|g'(x)|}{\min_{[\frac{\alpha}{8},1]}|g'(x)|}\right).
	\]
\end{restatable}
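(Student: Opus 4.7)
The plan is to show that the convex hull of $A_t$ contracts strictly in each iteration until it reaches width $\alpha$, and to quantify the per-step contraction by the derivative bounds on $g$. As a warm-up I verify that every interval $[c - \alpha/2, c + \alpha/2]$ of measure $\alpha$ is a fixed point: the input $f_t(v) = \int_{A_t} g(|v - u|)\,du$ is symmetric about $c$, and $g' < 0$ together with $g$ even forces $f_t$ on $A_t$ to be minimized at the two endpoints and to decrease monotonically as $v$ moves outside $A_t$, so the super-level set of measure $\alpha$ coincides with $A_t$.

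The heart of the argument is a contraction lemma. Let $R_t = \sup A_t$ and $L_t = \inf A_t$. I would exhibit an interior ``bulk'' point $v^\ast \in A_t$ with the property that $A_t$ contains a sub-interval of width at least $\alpha/8$ reasonably close to $v^\ast$; this accounts for the $\alpha/8$ appearing in the bound. Comparing $f_t$ at $v^\ast$ with $f_t$ at $R_t$,
\[
f_t(v^\ast) - f_t(R_t) = \int_{A_t} \bigl[g(|v^\ast - u|) - g(|R_t - u|)\bigr]\,du,
\]
the mean value theorem applied to $g$, together with the strict negativity of $g'$ on $[\alpha/8, 1]$, lower-bounds this difference by a positive quantity proportional to $(R_t - v^\ast)\cdot \min_{[\alpha/8,1]}|g'(x)|$, with the $\alpha/8$-wide window near $v^\ast$ serving to guarantee a definite contribution. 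Since $f_t$ is continuous and strictly decreases as $v$ moves past $R_t$, the cap threshold $\tau_t$ must therefore exceed $f_t(R_t)$ by at least this amount, and the super-level set $A_{t+1}$ must exclude an open neighborhood of $R_t$; the analogous argument at $L_t$ shrinks the hull from the left.

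To translate this into a per-step width decrease, I use the maximal slope $\max_{[0,1]}|g'|$ to upper-bound how fast $f_t$ changes near $R_t$: the gap $\tau_t - f_t(R_t)$ forces $R_{t+1}$ to retreat from $R_t$ by an amount at least of order $\min_{[\alpha/8,1]}|g'|/\max_{[0,1]}|g'|$, so the excess width $R_t - L_t - \alpha$ decreases by this amount each step, producing the stated $O(\max|g'|/\min|g'|)$ convergence time; once the hull has width $\alpha$, the warm-up identifies the fixed point. The main obstacle is the contraction lemma: the delicate part is choosing $v^\ast$ and verifying that the ``bulk pull'' on $f_t$ dominates even when $A_t$ is a highly fragmented countable union of small intervals scattered across $[L_t, R_t]$, and handling the degenerate symmetric multi-cluster configurations whose perfect symmetry would otherwise freeze the process.
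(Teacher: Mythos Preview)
Your overall strategy of monitoring a scalar potential that decreases by a fixed amount each step is the same as the paper's, and your warm-up fixed-point verification matches theirs. But the specific potential you chose---the convex-hull width $R_t - L_t$---does not work, and this is the main gap.

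The step ``the cap threshold $\tau_t$ must therefore exceed $f_t(R_t)$'' does not follow from $f_t(v^\ast)>f_t(R_t)$. To conclude $\tau_t>f_t(R_t)$ you would need $|\{v:f_t(v)\ge f_t(R_t)\}|\ge\alpha$, and nothing in your argument establishes this. In fact the hull can expand: the paper explicitly notes that the leftmost point of $A_{t+1}$ can lie to the left of $a_1=L_t$ (``it is possible that $z=m_1-\epsilon<a_1$, such that the left end of the interval decreases''). So $L_{t+1}<L_t$ (and symmetrically $R_{t+1}>R_t$) can occur, and your contraction lemma is false as stated.

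The paper sidesteps this by tracking the \emph{midpoints} $m_1,m_n$ of the first and last intervals rather than the endpoints. The key observation is that $F_t(m_1+\epsilon)>F_t(m_1-\epsilon)$ for all $\epsilon>0$ (the first interval contributes symmetrically, every later interval pulls rightward), so even when the left endpoint moves left, the new first interval is $[m_1-\epsilon,\,m_1+\epsilon+\epsilon']$ with $\epsilon'>0$, and its midpoint strictly increases. Two auxiliary claims then lower-bound the shift: one locates the first local maximum of $F_t$ at $m_1+\delta$ with $\delta\gtrsim \alpha\cdot\min_{[\alpha/8,1]}|g'|/\max_{[0,1]}|g'|$, and the other converts $\delta$ into a midpoint shift. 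Your worry about ``degenerate symmetric multi-cluster configurations'' is also resolved by this: in a symmetric two-interval state the midpoints still move inward, so nothing is frozen.

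If you want to rescue your approach, you would need a different monotone quantity; the hull endpoints are not it.
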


Note that the above conditions capture any distance function that decays smoothly with the distance between its endpoints, e.g., the well-known Gaussian kernel.
This process is deterministic given the initial choice of $A_0$. We are able to bound the convergence using a simple potential function: 

\begin{center}
    {\em The distance between the medians of the leftmost and rightmost intervals of $A_t$ decreases.} 
\end{center}

In this way, the intervals are ``squeezed" together until they collapse into one. In this continuous version, any sub-interval in $[0, 1]$ of length $\alpha$ is a fixed point; if $A_t = [a, b]$, then $A_{t+1} = A_t$. Moreover, a single interval is the only possible fixed point.

\paragraph{Formal Definition of the Discrete Process. }
Now we turn to the main setting of this paper, the $k$-cap process on a finite directed graph $G$. The following symbols will be used for the rest of the paper. Let $n$ be the number of vertices in the graph and $k$ be the number of vertices activated at each step. $A_t$ represents the set of $k$ vertices activated at step $t$ for $t=0,1,2,\ldots$. Let $\mathbbm{1}_{(x,y)}$ be the indicator variable for the directed edge between two vertices $x$ and $y$.
\begin{definition}[$k$-cap Process]
	\label{k-cap-definition}
	Assume $A_t \subset \{1, 2, \dots, n\}$, and $|A_t| = k$.
	Let $F_t:\{1, 2, \dots, n\} \rightarrow \{0, 1, 2, \dots,  k\}$ be the synaptic input function at time $t$, defined as follows:
	$$F_t(x) = \sum_{y\in A_t} \mathbbm{1}_{(y, x)}$$
	Let $C_t$ be the smallest integer such that $|\{x \mid F_t(x)> C_t\}|\leq k$,   and let 
	$$A_{t+1} = \{x \mid F_t(x)> C_t\} \cup A_{t+1}^*$$
	where $A_{t+1}^*$ is a set of points sampled at random from $\{x \mid F_t(x) = C_t\}$ such that $|A_{t+1}| = k$. 
\end{definition}
In the $k$-cap process, $A_{t+1}$ is chosen as the $k$ vertices with the highest degree from $A_t$. If there are ties, the remaining vertices are chosen uniformly at random from the set of vertices with the next highest degree. $A_0$ can be instantiated in any way, but we assume that it is chosen uniformly at random from the set of vertices. 

We analyze the convergence of the $k$-cap process on a $d$-dimensional Gaussian geometric random graph; the probability of an edge between two vertices with hidden variables $x$ and $y$ is a Gaussian kernel; i.e., $\mathbb{P}((x, y) \in G) = g(x, y) = \exp\left(-(x-y)^2/(2\sigma^2)\right)$.
Here, $\sigma$ is a parameter of the model. For simplicity, we use $x$ to represent both the vertex and its hidden variable in $[0, 1]^d$. Throughout this paper, we will use the terms \textit{point} and \textit{vertex} interchangeably. 

\begin{definition}[$d$-dim Gaussian Geometric Random Graph]
	\label{graph-definition}
 Let $G=G_\sigma = (V,E)$. Let $V=\{v_1, v_2, \dots, v_n\}$ where each vertex is a point chosen uniformly at random in $[0,1]^d$. Each directed edge $(x,y)$ is present in the graph with probability $$\mathbb{P}(\mathbbm{1}_{(x,y)}) = g(x,y) = \exp\left(-\frac{\|x-y\|_2^2}{2\sigma^2}\right)$$
\end{definition}
\noindent Unless otherwise stated, assume $\|x-y\| = \|x-y\|_2$ is the Euclidean distance.
\paragraph{Convergence of the Discrete Process.} 

The discrete process on graphs turns out to exhibit much more complex behavior than the continuous variant. With the randomness induced by the choice of edges, the convergence behavior also becomes probabilistic rather than ending in a fixed set or distribution. We will prove that in the interesting range of $\sigma$, the cap $A_t$ converges with high probability to lie within a small ball (an interval when $d=1$). Note that the process will not converge to a fixed set of points; it will instead randomly oscillate within a small subset of the hidden variable space, corresponding to a small dense subgraph of $G$. The reasons for this type of convergence are discussed in Section \ref{sec:proof-outline}. We believe that this behavior is both interesting mathematically and of relevance to modeling the brain; it provides a new perspective on the notion of an assembly.

Given that $A_t$ will not converge to a fixed set of points, we will determine the size and structure of the set of likely points. First, it is not immediately obvious that the set will converge to having its support in a single small interval, e.g., it may be the case that the set of points which fire are split between two weakly connected subgraphs. Additionally, we are interested in the \textit{width} of the set of likely points. When two points are at a distance $\Theta(\sigma)$, the probability that they are connected by an edge is a constant, bounded away from $0$. At a larger distance, the probability drops off quickly; therefore, we might expect that $A_t$ will be contained within an interval of size $O(\sigma)$. In fact, as stated in Theorem \ref{thm-main-at-multidim} below, we find that it will converge to an interval much smaller than $\sigma$.

\begin{theorem}
\label{thm-main-at-multidim}
There exists a $t^* \le \ln^c k$, for a constant $c$, such that $A_t$ can be covered by a single ball of radius $\Theta\left(\sigma \sqrt{\ln k/k}\right)$.
\end{theorem}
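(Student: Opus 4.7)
The plan is to track the radius $r_t$ of the smallest ball containing $A_t$ and show that it contracts until it reaches the equilibrium scale $r^{*} = \Theta(\sigma\sqrt{\ln k/k})$, which is exactly the scale at which the signal and noise in one update balance. I would split the argument into a coarse phase, in which $r_t \gtrsim \sigma$ and the analysis runs in analogy with the continuous result (Theorem~\ref{thm:cts_convergence}), and a fine phase, in which $r_t \ll \sigma$, where the $\sqrt{\ln k/k}$ scale emerges.

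For the fine phase, let $c_t$ be the centroid of $A_t$ and fix a candidate vertex $y$ at distance $d$ from $c_t$. The identity $\|x-y\|^2 = d^2 + \|x-c_t\|^2 - 2(y-c_t)\cdot(x-c_t)$, a Taylor expansion of $g$, and the defining property $\sum_{x \in A_t}(x-c_t)=0$ of the centroid give
\[
\mathbb{E}[F_t(y)] = k\,e^{-d^2/(2\sigma^2)}\bigl(1+O(r_t^2/\sigma^2)\bigr).
\]
The crucial observation is that $\Var[F_t(y)] = \sum_{x\in A_t} g(x,y)(1-g(x,y))$ is \emph{not} the naive $O(k)$: since $1-g(x,y) = O(\|x-y\|^2/\sigma^2)$ when $\|x-y\| \ll \sigma$, one has $\Var[F_t(y)] = O(k(r_t+d)^2/\sigma^2)$. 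Bernstein's inequality with a union bound over $n$ vertices then gives $|F_t(y)-\mathbb{E}[F_t(y)]| = O((r_t+d)\sqrt{k\ln n}/\sigma)$ uniformly. The expected-input gap $\mathbb{E}[F_t(c_t)]-\mathbb{E}[F_t(y)] = \Omega(kd^2/\sigma^2)$ dominates this noise precisely when $d \gg \sigma\sqrt{\ln n/k}$, so any $y$ outside a ball of radius $C\sigma\sqrt{\ln n/k}$ around $c_t$ loses to the $\Theta(k)$ vertices packed near $c_t$, and $A_{t+1}$ is contained in a ball of radius $O(\sigma\sqrt{\ln n/k})$.

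For the coarse phase I would use a spread statistic of $A_t$ --- the diameter, or a multidimensional analogue of the median-gap potential from Theorem~\ref{thm:cts_convergence} --- and show that it decays by a constant factor per step as long as $r_t \gtrsim \sigma$. The mechanism mirrors the continuous analysis: $y \mapsto \mathbb{E}[F_t(y)]$ is a sum of Gaussians, strongly peaked near the mode of $A_t$, and the top-$k$ threshold squeezes the support toward that mode. Combining the coarse phase (taking $O(\ln k)$ rounds to drive $r_t$ down to $O(\sigma)$) with the fine phase (at most $O(\ln k)$ further halving rounds until the radius matches the noise floor) gives a total convergence time of $O(\ln^c k)$; a union bound over the poly-logarithmically many rounds controls the overall failure probability, under the assumption $n = \mathrm{poly}(k)$ so that $\ln n = O(\ln k)$.

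The main obstacle is ensuring that the support does not bifurcate into two or more distant clusters, so that the final covering is by a \emph{single} ball rather than several. I would address this by showing that the centroid $c_t$ drifts by at most $O(r_t)$ per step and that the uniform concentration of $F_t$ derived above rules out any faraway satellite cluster from surviving the top-$k$ selection, so the balls nest down to the mode of the limit distribution. A delicate secondary point is that the fine-phase analysis breaks down exactly at $r_t \sim \sigma\sqrt{\ln k/k}$ --- where the signal and noise become comparable --- which is precisely why this is the correct stopping radius.
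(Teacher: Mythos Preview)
Your fine-phase centroid analysis is on the right track---the variance bound $\Var[F_t(y)] = O\bigl(k(r_t+d)^2/\sigma^2\bigr)$ is exactly the mechanism that pins the equilibrium radius at $\sigma\sqrt{\ln k/k}$, and the paper uses essentially the same idea via $\hat k - E[z]$ in Lemma~\ref{lem:Ex-difference}. (One overstatement: your comparison of signal $kd^2/\sigma^2$ against noise $(r_t+d)\sqrt{k\ln n}/\sigma$ only excludes $d \gtrsim \sqrt{r_t \cdot \sigma\sqrt{\ln n/k}}$, not $d \gtrsim \sigma\sqrt{\ln n/k}$ in one shot; you get geometric contraction toward $r^*$, not a single jump.)

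The genuine gap is the multi-cluster structure, which you flag as an ``obstacle'' but misdiagnose. The process does not start in a single ball: Theorem~\ref{thm:A1} shows that $A_1$ is already split into $k^{1/4+o(1)}$ balls of radius $O(\sigma\sqrt{\ln\ln k})$, pairwise separated by $\ge 2\sigma\sqrt{\ln n}$, coming from Poisson clumping of the uniform $A_0$. For this configuration the global centroid $c_t$ lies in empty space with $\mathbb{E}[F_t(c_t)] \approx 0$, so your centroid comparison is vacuous; and a vertex $y$ inside cluster $I_j$ draws essentially all its input from $I_j$ itself, so its $F_t$ matches that of any vertex in another cluster of the same size---concentration of $F_t$ does \emph{not} kill satellites. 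The task is thus not to prevent bifurcation but to collapse an initially bifurcated state, and your ``centroid drifts by $O(r_t)$'' fix does not do this. The paper's route is: (i) use the separation to decouple the balls and show each shrinks in place via Lemma~\ref{lem:main_At-multidim}, with a two-case (balanced/imbalanced) analysis giving only a $1-1/(\ln k)^c$ contraction per step, whence $t^* = \polylog(k)$ rather than your $O(\ln k)$; then (ii) once all surviving balls have radius $O(\sigma\sqrt{\ln k/\hat k})$, a separate random-fluctuation argument---the $\Theta(\sqrt{M})$ deviations in $|I_j \cap A_t|$ break the symmetry between clusters, one cluster's input level overtakes the common threshold $C_t$, and the others are eliminated in $O(\ln k)$ further rounds. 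Your coarse phase (``analogy with the continuous result'') supplies neither piece: the continuous theorem is one-dimensional, deterministic, and about coalescing overlapping intervals, whereas here the clusters are decoupled and statistically exchangeable, so no deterministic potential can select a winner.
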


\paragraph{Evolution of the $k$-cap.}
    \begin{wrapfigure}{h!}{0.5\textwidth}
    \includegraphics[width = 0.5\textwidth]{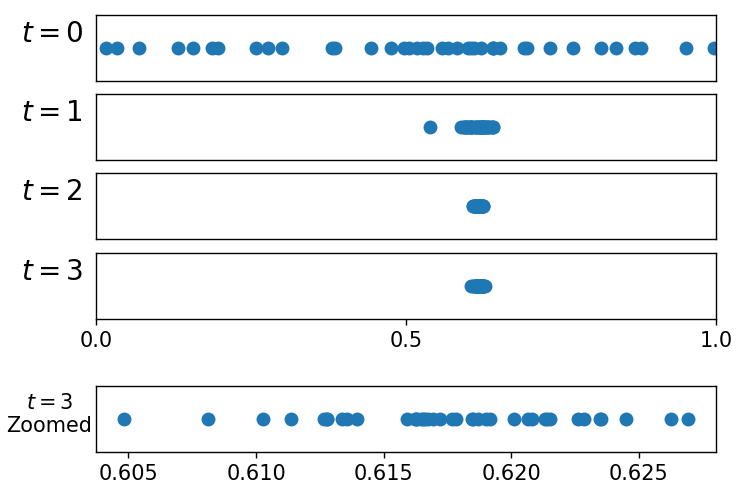}
	\caption{The $k$-cap at 4 time steps, with parameters $n=90000$ and $k=40$. }
	\label{fig:simulation2}
\end{wrapfigure}
  The evolution of the structure of $A_t$ also reveals interesting properties of the $k$-cap process on random graphs. 
	In the first step, $A_0$ is uniformly distributed. However, due to the the {\em Poisson clumping} phenomenon~\cite{aldous2013probability},  there will be a several regions of the ball with a higher concentration than average. As we will show, for $\sigma$ sufficiently small, $A_1$ will be concentrated within $k^{1/4+o(1)}$ balls which are small compared to $[0, 1]^d$ (This result is described formally in Theorem \ref{thm:A1}).
	As $t$ increases, all but one of these balls will diminish and disappear. With high probability, each ball will shrink by a fixed fraction at each step. After this, we will show that one ball will ``win" over the others.
 
	When $n$ is sufficiently large, $A_t$ eventually lies within a subset of a ball of radius $\sigma\sqrt{\ln k/k}$. The distribution of $A_t$ is fairly uniform in the core of the ball, and the probability drops toward $\frac{1}{n}$ as it moves to the edges. Figure~\ref{fig:simulation2} graphs the points which fired in a simulation of the $k$-cap process over 4 time steps. In both, the graph is random with each edge added with probability $\exp(-|x-y|^2 k^2/2)$. 

\paragraph{Parameter Range. } In this paper, we focus on $\sigma = \Theta(1/k^{1/d})$. The justification for this parameter range lies in the concentration behavior of uniform random variables. The soft geometric random graph model can be thought of as an approximation of an interval graph with radius $\Theta(\sigma)$. Let $U=\{U_1, U_2, \dots U_k\}$ be a set of $k$ random variables, each chosen uniformly at random in $[0, 1]^d$. For a given radius $r$, we can compute the maximum number of points which are likely to fall into a ball of radius $r$ (this result is described in Lemma \ref{lem-max-degree}).  Note the expected number of points in a ball $I$ is $k\Vol(I)$, and $\Vol(I) = \Theta(r^d)$; as $r$ increases, the maximum degree approaches the expected value. This phenomenon means that the concentration behavior starts to disappear as $\sigma$ increases past $1/k^{1/d}$. On the other hand, as $r$ decreases, the maximum degree approaches $1$. Therefore, we focus on an intermediate range of $\sigma$ where the concentration of $A_0$ leads to interesting behavior. 

\paragraph{Organization.} 
We next discuss the discrete process in detail and present our precise findings about its convergence. Following that, in Section~\ref{sec:discrete-proof}, we prove our main theorem about this, starting with a detailed exposition of related probabilistic considerations. In Section~\ref{sec:cts} we prove the convergence of the continuous process. 

\subsection{Analysis Outline}

\label{sec:proof-outline}

An important quantity in the analysis will be the probability that $F_t(x)$ exceeds a given threshold: $p_{C, t}(x) = \mathbb{P}(F_t(x) \geq C)$.
The probability is conditioned on the random choice of edges in the graph. If $n$ is sufficiently large, we can make the assumption that with high probability, the random variable $F_t(x)$ depends only on the edges from $x$ to $A_t$ and is independent of $A_0, \dots, A_{t-1}$. This will be proven formally when it becomes relevant. When not otherwise specified, $p_t(x) = p_{C_t, t}(x)$ is the probability that $F_t(x)$ strictly exceeds the $k$-cap threshold $C_t$, conditioned on the choice of the set $A_t$.  Figure~\ref{fig:simulations} shows an empirical demonstration of how the functions $\mathbb{E}F_t(x)$ and $p_t(x)$ evolve over time. 

\begin{figure}[h]
	\centering
    \centering
	\subfloat[\centering $t=0$]{{\includegraphics[width=0.45\textwidth]{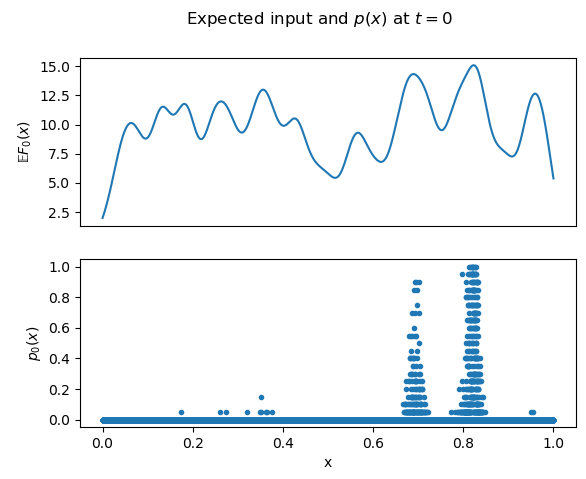} }}
	\qquad
	\subfloat[\centering $t=2$]{{\includegraphics[width=0.45\textwidth]{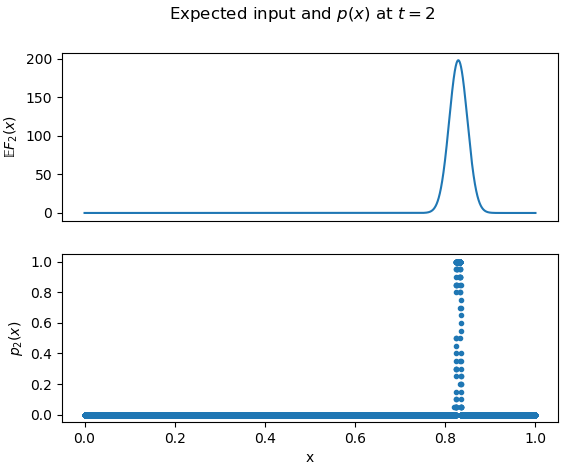} }}
	\caption{Expected input and $p_t(x)$ at two different time steps for $n=20000$ and $k=200$. The probability $p_t(x)$ was estimated by fixing the firing set, and then repeatedly redrawing the graph edges. The probability of $x$ is defined as the percentage of times $x$ was chosen by the top-k function. The left figure illustrates the input due to $A_0$, which is uniformly random on $[0,1]$. The right figure shows the input two steps later, when the set $A_2$ has converged to lie in a single, narrow interval. }
	\label{fig:simulations}
\end{figure}

The proof will be divided into three parts. First, we will characterize the structure of $A_1$.
The set $A_0$ is drawn uniformly at random from the set of vertices. However, due to the {\em Poisson clumping} phenomenon~\cite{aldous2013probability}, there will be a few regions of the hidden variable space which have a significantly higher concentration of points than average. Vertices with hidden variables in these regions will have larger inputs $F_0$. We have the following theorem describing points which have a significant probability of exceeding the threshold at Step $0$. 

\begin{restatable}{theorem}{theoremAone}
	\label{thm:A1}
	Let $n = k^{\beta}$ for some constant $\beta \geq 2+d$. 
	Then, with high probability, $A_1$ can be covered by $k^{\frac{1}{4}+o(1)}$ balls, each of radius $O(\sigma\sqrt{\ln \ln k})$ and pairwise separated by a distance of at least $2\sigma \sqrt{\ln n}$.
\end{restatable}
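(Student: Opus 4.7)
My plan is to analyze $A_1$ by reducing to the Poisson clumping structure of the uniform random set $A_0$. Let $\mu(x):=\sum_{y\in A_0} g(x-y)$ denote the conditional mean of $F_0(x)$ given $A_0$. Since $\sigma^d = \Theta(1/k)$, the set $A_0$ essentially behaves like $k$ i.i.d.\ uniform points in $[0,1]^d$, so $\mathbb{E}\mu(x) = k\int g = \Theta(1)$ and only $A_0$-points within $O(\sigma)$ of $x$ contribute significantly to $\mu(x)$. Conditional on $\mu(x)$, $F_0(x)$ is a sum of independent Bernoullis, so by a LeCam-type Poisson approximation plus Chernoff I would pin $F_0(x)$ tightly to $\mu(x)$; tail bounds on $F_0(x)$ then translate cleanly into tail bounds on the local $A_0$-density around $x$.

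First I would determine the threshold $C_0$. Using Poisson tail estimates, $\mathbb{P}(F_0(x)\ge C) \asymp 1/C!$ for $C = \omega(1)$; I would then match the expected count $n\cdot\mathbb{P}(F_0(x)\ge C_0)$ to $\Theta(k)$, which forces $C_0 = \Theta(\log k/\log\log k)$ (with the constant depending on $\beta$), and concentrate the count of candidates with a Bennett- or second-moment argument. Spatial correlation of $F_0$ across nearby vertices reduces the effective number of independent locations, but the order of $C_0$ remains this logarithmic scale.

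Next I would localize $A_1$ to small neighborhoods of dense $A_0$-clumps. Splitting $\mu(x) = \mu_{\text{near}}(x) + \mu_{\text{far}}(x)$ at radius $R$ from $x$ gives $\mu_{\text{far}}(x)\le k\cdot e^{-R^2/(2\sigma^2)}$, which drops below $C_0$ once $R = \Omega(\sigma\sqrt{\ln\ln k})$. Hence every vertex of $A_1$ must sit within $O(\sigma\sqrt{\ln\ln k})$ of a region containing $\Omega(C_0)$ points of $A_0$, giving the ball radius in the cover. To count these clumps I would apply a Poisson tail bound on a $\sigma$-net of $[0,1]^d$: the number of net balls with the requisite density, after accounting for clump overlap at the relevant scale and the regime $\beta\ge 2+d$, yields a high-probability bound of $k^{1/4+o(1)}$ on the number of cover balls. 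The pairwise separation by $2\sigma\sqrt{\ln n}$ then follows from a second union bound over pairs of clump candidates: any two dense regions within $2\sigma\sqrt{\ln n}$ can be merged into a single cover ball, while the Gaussian decay $\le 1/n^2$ across that gap makes the probability of two genuinely distinct survivors landing closer than this $o(1)$.

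The main technical obstacle is the tail calibration needed to extract the specific exponent $1/4+o(1)$: I must simultaneously control (i) the placement randomness of $A_0$, which generates $\mu$, (ii) the edge randomness of $G$, which generates $F_0$ given $\mu$, and (iii) the correlation of $F_0(x)$ across nearby vertices. I would attack this by conditioning on $A_0$, establishing tail bounds on $F_0$ uniformly in that realization, then taking expectation over $A_0$ together with a union bound over the $O(n)$ candidate vertices and the $\mathrm{poly}(k)$ candidate clump pairs.
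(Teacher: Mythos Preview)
Your overall architecture matches the paper's: lower-bound $C_0$ at the scale $\Theta(\ln k/\ln\ln k)$, show that any $x\in A_1$ must lie near a dense clump of $A_0$, count the clumps via a box/net covering to get $k^{1/4+o(1)}$, and argue separation. The paper does exactly this through Lemmas~2.1 and~2.2 and a partition of $[0,1]^d$ into boxes of side $\Theta(\sigma\sqrt{\ln\ln k})$.

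There is, however, a genuine quantitative gap in your localization step. You write $\mu_{\text{far}}(x)\le k\cdot e^{-R^2/(2\sigma^2)}$ and claim this drops below $C_0$ at $R=\Omega(\sigma\sqrt{\ln\ln k})$. It does not: at $R=c\,\sigma\sqrt{\ln\ln k}$ your bound is $k(\ln k)^{-c^2/2}$, which for any constant $c$ is far larger than $C_0=\Theta(\ln k/\ln\ln k)$; your crude bound would force $R=\Omega(\sigma\sqrt{\ln k})$, destroying the ball radius in the statement. The point you are missing is that the far contribution must be controlled by \emph{averaging} over the uniform placement of $A_0$, not by the worst-case edge weight. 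The paper conditions on $|B_R(x)\cap A_0|$ and computes
\[
\mathbb{E}\big[\mathbbm{1}_{(x,y)}\,\big|\, y\notin B_R(x)\big]\;\le\;\Theta(\sigma^d)\,(R/\sigma)^{d-2}e^{-R^2/(2\sigma^2)}\;=\;\Theta(1/k)\,(\ln k)^{-\alpha^2/2}
\]
for $R=\alpha\sigma\sqrt{\ln\ln k}$, using $\sigma^d=\Theta(1/k)$; the integrated Gaussian tail over the \emph{volume} outside $B_R$ is what buys the extra factor $1/k$. Summing over $\le k$ far points gives mean $o(1)$, and then a KL-type binomial tail (not merely Chernoff around the mean) is needed to push $\mathbb{P}\big(F_0(x;\,\text{far})\ge C_0/4\big)$ below $1/n^3$. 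Your ``LeCam plus Chernoff'' pinning of $F_0$ to $\mu$ does not substitute for this, because you must rule out a large-deviation event at $\Theta(\ln k/\ln\ln k)$ standard deviations above a mean of $o(1)$.

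Two smaller points. First, your threshold argument ``match $n\cdot\mathbb{P}(F_0(x)\ge C_0)$ to $\Theta(k)$'' is heuristic in the wrong direction: you need a \emph{lower} bound on $C_0$, and the spatial correlation you mention cuts against a first-moment/second-moment count. The paper instead exhibits a single ball of radius $\tfrac12\sigma\sqrt{\ln\ln k}$ containing $(1+\eta)\ln k/\ln\ln k$ points of $A_0$ (via the max-degree of a geometric graph) and shows directly that $\Omega(k)$ vertices of $V$ inside it connect to all of them; this gives the needed lower bound on $C_0$ without any correlation analysis. Second, the exponent $1/4$ is not explained in your sketch; in the paper it comes from the binomial tail $\mathbb{P}\big(|I_i\cap A_0|\ge \tfrac34\ln k/\ln\ln k\big)\le k^{-3/4+o(1)}$ for a box of volume $\Theta((\ln\ln k)^{d/2}/k)$, times $\Theta(k)$ boxes.
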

By the assumption that $\sigma = \Theta(k^{-1/d})$, the set $A_1$ is contained in a small region relative to $[0, 1]^d$. 

The next step of the analysis will show that $A_t$ will gradually become concentrated in a single ball of radius $O(\sigma\sqrt{\ln k/k})$. The key idea is that each individual ball shrinks with high probability at each step, almost in place. 

The statement of this lemma is below. We use $r(S)$ to denote the radius of the smallest ball containing a set $S$.

\begin{restatable}{lem}{lemmainAt}
	\label{lem:main_At-multidim}
	Suppose that $A_t \subset I_1 \cup \dots\cup I_i$, where $i = O(k^{1/4+o(1)})$,  and for each $j$, $I_j$ is a ball of radius $r_j$, bounded by $r(I_j) = \Omega(\sigma\sqrt{\ln k/|I_j \cap A_t|})$ and $r(I_j)<C\sigma \sqrt{\ln \ln k}$ for some constant $C$. Also assume that the distance between any two balls is at least $2(1-o(1))\sigma\sqrt{ \ln n}$. 
At the next step, with high probability, 
	$A_{t+1} \subset I_1' \cup I_2'\cup \dots \cup I_i'$,
	where $d(I_j, I_j') = \max_{x \in I_j'}\min_{y \in I_j}\lVert x-y\rVert < 5(r(I_j) - r(I_j'))$, 
	and $r(I_j')\le (1- \frac{1}{(\ln k)^c}) r(I_j)$ for an absolute constant $c$.
\end{restatable}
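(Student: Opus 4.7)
I would treat each ball $I_j$ essentially independently, exploiting the separation assumption $\ge 2(1-o(1))\sigma\sqrt{\ln n}$ to decouple their contributions. Write $k_j := |A_t \cap I_j|$, so $\sum_j k_j = k$. For any vertex $y$ within distance $2\,r(I_j)$ of $I_j$, every $x \in A_t \cap I_l$ with $l \neq j$ lies at distance $\ge (2-o(1))\sigma\sqrt{\ln n}$ from $y$, so $g(x,y) \le n^{-2+o(1)}$ and the total cross-ball contribution to $\mathbb{E}[F_t(y)]$ is $o(1)$. Hence $\mathbb{E}[F_t(y)]$ agrees, up to an $o(1)$ additive error, with the single-ball profile $f_j(y) := \sum_{x \in A_t \cap I_j} g(x,y)$. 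Vertices $y$ not within $\sigma\sqrt{\ln n}$ of any ball have $\mathbb{E}[F_t(y)] = o(1)$ and cannot be winners. Conditional on $A_t$, the indicators $\mathbbm{1}_{(x,y)}$ are independent (by the independence assumption quoted earlier in the excerpt), so Bernstein's inequality plus a union bound over all $n$ vertices gives
\[
|F_t(y) - \mathbb{E}[F_t(y)]| = O\!\left(\sqrt{\mathbb{E}[F_t(y)]\ln n} + \ln n\right)
\]
simultaneously for all $y \in V$ with probability $1 - n^{-\Omega(1)}$.

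Next I would analyze the profile $f_j$ around its maximizer $y_j^*$, which sits near the centroid of $A_t \cap I_j$ and in particular within $r(I_j)$ of every $x \in A_t \cap I_j$. From $r(I_j) \le C\sigma\sqrt{\ln \ln k}$ one obtains $f_j(y_j^*) \ge k_j \exp(-r(I_j)^2/(2\sigma^2)) \ge k_j/\polylog(k)$. The crux is quantifying the drop of $f_j$ as one moves outward from $y_j^*$. I would combine (i) a second-order expansion near $y_j^*$, where each summand contributes curvature of order $g(x,y_j^*)/\sigma^2$ in every direction, with (ii) a direct bound at the boundary of $I_j$ using $\|y - x\| \ge \|y - y_j^*\| - r(I_j)$, to obtain
\[
f_j(y_j^* + v) \le f_j(y_j^*) - \Omega\!\left(\frac{k_j\,\|v\|^2}{\sigma^2\,\polylog(k)}\right)
\]
valid for $\|v\| \le O(r(I_j))$.

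Setting $r_j' := (1 - 1/(\ln k)^c)\,r(I_j)$ for a sufficiently large constant $c$ and $I_j' := B(y_j^*, r_j')$, the drop in $f_j$ from $y_j^*$ out to the shell $\|v\| \in [r_j', r(I_j)]$ is at least $\Omega(k_j\,r(I_j)^2/(\sigma^2\polylog(k)))$, which exceeds the concentration error $O(\sqrt{k_j \ln n}+\ln n)$ by invoking the lower bound $r(I_j)^2 \ge \sigma^2 \ln k / k_j$ and $\ln n = O(\ln k)$. Consequently, with high probability no vertex $y$ at distance $> r_j'$ from $y_j^*$ (but near $I_j$) attains $F_t(y)$ above the top-$k$ threshold $C_t$, so every winner contributed by the vicinity of $I_j$ lies in $I_j'$. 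The proximity $d(I_j, I_j') < 5(r(I_j) - r_j')$ then follows from the fact that $y_j^*$ lies in (or within $o(r(I_j))$ of) $I_j$.

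\textbf{Main obstacle.} The tightest step is pinning down the precise exponent $c$ in the shrinkage ratio $1 - 1/(\ln k)^c$: the drop in $f_j$ across the narrow shell of width $r(I_j)/(\ln k)^c$ must dominate the $\sqrt{k_j \ln n}$-scale noise, and both quantities can be only a $\polylog(k)$ factor apart for $k_j$ close to the imposed lower bound $\Omega(\ln k/\ln\ln k)$, so a careful accounting of the Gaussian sum's curvature vs.\ the Bernstein noise is required. A secondary obstacle is maintaining global consistency of the threshold $C_t$ across all $i = k^{1/4+o(1)}$ balls simultaneously: one must argue that each ball produces approximately $k_j$ high-input vertices inside its own $I_j'$, so that the per-ball counts sum to $k$; this requires combining the per-ball analysis with a global counting argument and a union bound over the $i$ balls.
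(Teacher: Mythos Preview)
Your decoupling of the balls via the separation assumption and the Bernstein-type concentration of $F_t(y)$ around $\mathbb{E}[F_t(y)]$ are fine and match the paper. The gap is in your analysis of the single-ball profile $f_j$.

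The claimed quadratic drop
\[
f_j(y_j^* + v)\;\le\; f_j(y_j^*)\;-\;\Omega\!\left(\frac{k_j\,\|v\|^2}{\sigma^2\,\polylog(k)}\right),\qquad \|v\|\le O(r(I_j)),
\]
is false in general. The Hessian contribution of a single summand $g(x,\cdot)$ in a unit direction $v$ is $g(x,y_j^*)\bigl[(v\cdot(y_j^*-x))^2/\sigma^4-1/\sigma^2\bigr]$, which is \emph{positive} whenever $|v\cdot(y_j^*-x)|>\sigma$; since the lemma allows $r(I_j)$ up to $C\sigma\sqrt{\ln\ln k}>\sigma$, your statement that ``each summand contributes curvature of order $g(x,y_j^*)/\sigma^2$ in every direction'' is simply wrong in that range. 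More decisively, the global inequality fails: put half of $A_t\cap I_j$ at $p+re_1$ and half at $p-re_1$ with $r\gtrsim\sigma$. Then $f_j$ has two nearly equal peaks, $y_j^*$ sits near one of them, and $f_j(y_j^*)-f_j(y_j^*-2re_1)\approx 0$, whereas your bound would demand a drop of order $k_j r^2/(\sigma^2\polylog(k))$.

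The same bimodal example also breaks your displacement claim. With $y_j^*$ at distance $\approx r$ from the centre of $I_j$ and $r_j'=(1-1/(\ln k)^c)r$, the ball $B(y_j^*,r_j')$ protrudes from $I_j$ by roughly $r-r/(\ln k)^c$, which is far larger than $5(r-r_j')=5r/(\ln k)^c$. So neither the containment of winners in $I_j'$ nor $d(I_j,I_j')<5(r(I_j)-r(I_j'))$ can be obtained by centring at $y_j^*$ with a $1/(\ln k)^c$ shrink.

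The paper handles exactly this obstruction by a dichotomy on the point distribution inside $I_j$. Either (Case~1) some hyperplane cuts $I_j$ into caps of heights $r/4$ and $7r/4$ with the larger cap holding at most $k_j/(\ln k)^{\alpha}$ points; then a first-derivative argument (not a second-order expansion) shows that every candidate winner lies in a ball of radius $19r/20$ shifted toward the heavy cap, and the large shrink $r-r'=r/20$ absorbs the shift so that $d(I_j,I_j')<5(r-r')$. Or (Case~2) no such cut exists; then in every boundary direction at least $k_j/(\ln k)^{\alpha}$ points sit in the far cap, which forces a strictly positive inward derivative of $E[\cdot]$ near every boundary point and confines $A_{t+1}$ to the \emph{concentric} ball of radius $(1-1/(\ln k)^{c})r$, with zero displacement. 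Only by coupling the size of the shrink to the location of the new centre in this way do you get the stated bound; a single Taylor expansion around $y_j^*$ cannot do both jobs at once.
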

Up to some small deviations, each ball determined by time Step $1$ will either disappear or shrink slightly in place. Moreover, the surviving balls will remain separated; the maximum distance moved by a single ball by time $t$ is $5 (r(I^{(0)}_j) - r(I^{(t)}_j)) = O(\sigma \sqrt{\ln \ln k})$. Using this, we will show convergence to a structure contained within a single ball of radius $O(\sigma \sqrt{\ln k/k})$. This is the content of Theorem~\ref{thm-main-at-multidim}.

This structure is, to within a log factor, the smallest subgraph we can expect $A_t$ to converge to. Within a region of radius $O(\sigma k^{-1/2})$, the edge probability is greater than $e^{-1/k}$; hence, the degree of any vertex to $A_t$ will be almost constant in this interval. The probability that a vertex fires remains uniform in the center and drops off toward $\frac{1}{n}$ at the ends of the interval.

Finally, we show that conditioned on the structure of the graph, almost all of $A_t$ is contained within a ball of radius $O(\sigma k^{-1/3+\epsilon})$ for all $t\geq t^*$.
\begin{theorem}
\label{thm-all-t-multidim}
For all $t \geq t^*$, with high probability, there exists a ball $I_t$ with radius $r = \sigma k^{-1/3+\epsilon}$, for a constant $\epsilon > 0$, such that $|A_t \cap I_t| > k- k^{2/3}$.
\end{theorem}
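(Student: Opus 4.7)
}

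My plan is to promote the one-time guarantee of Theorem~\ref{thm-main-at-multidim} into a \emph{self-maintaining} invariant and propagate it inductively in $t$. The invariant at time $t$ is that there exists a center $c_t\in[0,1]^d$ with $|A_t\cap B(c_t,r)|\ge k-k^{2/3}$, where $r=\sigma k^{-1/3+\epsilon}$. The base case $t=t^*$ is immediate, since Theorem~\ref{thm-main-at-multidim} produces an all-of-$A_{t^*}$ cover of radius $\Theta(\sigma\sqrt{\ln k/k})$, which is much smaller than $r$ for any constant $\epsilon>0$.

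For the inductive step, I would condition on $A_t$ and treat the edges from vertices in $V$ to $A_t$ as fresh independent Bernoulli$(g(\cdot,\cdot))$ variables, as justified earlier in the paper under the assumption $n\ge k^{2+d}$. Let $\mu_t$ be the centroid of $A_t\cap B(c_t,r)$. Taylor-expanding the Gaussian kernel around $\mu_t$, and using the fact that the first-order cross term vanishes by centering, gives for $x$ at distance $d=\|x-\mu_t\|$,
\[
\mathbb{E}F_t(x)\;=\;k\,e^{-d^2/(2\sigma^2)}\bigl(1\pm O(k^{-2/3+2\epsilon})\bigr)\;\pm\;O(k^{2/3}),\qquad
\Var(F_t(x))\;\le\;k\min\{1,d^2/\sigma^2\}.
\]
The ball $B(\mu_t,r/2)$ contains $\Theta(nr^d)=\omega(k)$ vertices, each with mean input $k(1-O(k^{-2/3+2\epsilon}))$; a Bernstein estimate shows every one of them concentrates within $\tilde{O}(k^{1/6+\epsilon})$ of its mean, which pins the top-$k$ threshold at $C_t\ge k-O(k^{1/3+2\epsilon})$ with probability $1-k^{-\omega(1)}$.

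Having pinned $C_t$, I would rule out distant vertices. For $x$ at distance $d\ge r(1+k^{-\epsilon/3})$, the mean drop $kd^2/(2\sigma^2)$ exceeds the slack $O(k^{1/3+2\epsilon})$ by a factor of $k^{\Omega(\epsilon)}$ standard deviations, and a Bernstein tail bound gives $\Pr[F_t(x)\ge C_t]\le\exp(-k^{\Omega(\epsilon)})$. A union bound over $|V|=\text{poly}(k)$ then rules out all but $k^{o(1)}$ such far vertices from $A_{t+1}$. The only remaining contribution is the thin annulus $r\le d\le r(1+k^{-\epsilon/3})$; its volume is $\Theta(nr^dk^{-\epsilon/3})$, each vertex has marginal probability $k^{-\Omega(\epsilon)}$ of clearing the threshold, and a first-moment calculation bounds the expected number of annular vertices entering $A_{t+1}$ by $k^{2/3}/2$. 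A Chebyshev/second-moment argument concentrates this count below $k^{2/3}$. Hence $|A_{t+1}\cap B(\mu_t,r(1+k^{-\epsilon/3}))|\ge k-k^{2/3}$, which re-establishes the invariant at the cost of a negligible adjustment to $\epsilon$. Finally, each step fails with probability $\exp(-k^{\Omega(\epsilon)})$, so a union bound over any $\text{poly}(k)$-length horizon yields the conclusion for all $t\ge t^*$ simultaneously.

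The principal obstacle is the one already flagged by the paper: justifying the ``fresh Bernoulli'' assumption for $F_t(x)$ despite the fact that many edges into $A_t$ have been exposed at previous time steps. One must exploit $n\gg k$ to argue that, over a polylogarithmic time horizon, only a vanishing fraction of the edges incident to any particular vertex have been used, so the relevant conditional distribution remains effectively that of a fresh sum of independent Bernoullis. A secondary technical subtlety is the Taylor-expansion cross-term cancellation: expanding around the centroid eliminates the linear error, without which the absolute approximation error in $\mathbb{E}F_t(x)$ would be $\Omega(k^{1/3+\epsilon})$, exactly the scale of the mean drop we are trying to resolve, and the whole threshold argument would collapse.
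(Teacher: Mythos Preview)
Your inductive scheme has a genuine structural gap: it cannot deliver the quantifier ``for all $t\ge t^*$.'' Every step of your argument rests on treating the edges from $A_t$ as fresh Bernoullis, and you correctly flag that this is only justified over a polylogarithmic horizon (indeed, Lemma~\ref{lem:independence} in the paper only gives disjointness of the $A_s$ for $t=\polylog(k)$). Your concluding sentence then asserts that a union bound over a $\text{poly}(k)$-length horizon yields the claim for all $t\ge t^*$, but those are not the same thing: once $t$ exceeds polylog (or even poly) in $k$, vertices and edges necessarily repeat, your conditional-independence model breaks, and there is nothing left to union-bound over. The per-step failure probability $\exp(-k^{\Omega(\epsilon)})$ is irrelevant here; the obstruction is that the \emph{model} in which you compute that probability no longer applies.

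The paper avoids this entirely by proving a statement about the \emph{graph} rather than about the \emph{process}: with high probability over $G$, for \emph{every} $k$-subset $A\subset V$ with $|A\cap I|\ge k-k^{2/3}$ for some ball $I$ of radius $r$, the $k$-cap of $A$ has the same property. This is a union bound over all $\binom{n}{k}\le e^{k\log n}$ candidate sets, not over time steps, and it goes through because the individual tail probabilities are of order $e^{-\Theta(k^{1+2\epsilon})}$ (one needs $k^{2/3}$ far vertices to simultaneously beat the threshold, each with probability $e^{-\Theta(k^{1/3+2\epsilon})}$). Once this holds, the invariant propagates deterministically for all $t\ge t^*$ with no further appeal to freshness. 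To repair your argument you would need to replace the time-indexed induction with such a uniform-over-all-$A$ statement; the per-vertex tail bound you sketch is of the right order, but you must organize it so that the union bound is over $\binom{n}{k}$ sets rather than over a time horizon. The paper's two-case (balanced/imbalanced) analysis also handles the possibility that $A_t\cap I$ is lopsided within $I$, a situation your centroid expansion does not obviously control, since the $O(k^{-2/3+2\epsilon})$ multiplicative error in $\mathbb{E}F_t(x)$ that you write down presumes the second moment of $A_t$ about $\mu_t$ is $O(r^2)$, which can fail if the mass is pushed to the boundary of $I$.
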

The proof of this theorem directly implies the following structural property of geometric random graphs, which holds for all $S \subset V$ of size $k$ that are mostly contained in a small ball in $[0,1]^d$. With high probability over all such sets, the set of $k$ vertices with the highest degree from $S$ are also mostly contained in a small ball.
\begin{corollary}
Let $G=(V,E)$ be a geometric random graph such that for every vertex $x \in V$, its location $h_x$ is chosen uniformly at random from $[0,1]^d$ and for every pair $x,y \in V$, $\mathbb{P}((x,y) \in E) = e^{-\lVert h_x - h_y\rVert k^{2/d}}$, where $k = O(|V|^{\frac{1}{2+d}})$. Let $r = k^{-1/d - 1/3 + \epsilon}$ for any $\epsilon >0$.
Then, with high probability (over the edges of $G$), for every set $S \subset V$ of size $k$, if at least $k-k^{2/3}$ points of $S$ are contained in a ball of radius $r$, then there exists a ball of radius $r$ which contains at least $k-k^{2/3}$ points of $S'$, the set of $k$ points with the highest degree from $S$. 
\end{corollary}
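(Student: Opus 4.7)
The plan is to observe that Theorem~\ref{thm-all-t-multidim} must be proved by induction on $t$ whose inductive step is: \emph{if $A_t$ has at least $k - k^{2/3}$ points in some ball of radius $r$, then so does $A_{t+1}$}. The corollary is exactly this inductive step with an arbitrary set $S$ in the role of $A_t$ (and $S'$ in the role of $A_{t+1}$), upgraded to a simultaneous statement over all admissible $S$. I would therefore (i) extract the inductive step from the proof of Theorem~\ref{thm-all-t-multidim}, checking that it uses only the geometric concentration of $A_t$ and the independence of edges into $V \setminus A_t$, and then (ii) promote it from ``for a fixed $S$'' to ``for every $S$'' via a union bound.

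For (i), the per-$S$ argument runs through expected degrees: if $S$ has at least $k - k^{2/3}$ points in a ball $B$ of radius $r$ centered at $c$, then for $x$ inside $B$ the expected degree $\mu_S(x) = \sum_{s \in S} g(h_s, h_x)$ is at least $(1 - o(1))(k - k^{2/3})$, using that $r \cdot k^{1/d} = k^{-1/3 + \epsilon} = o(1)$; for $x$ at distance $D \gg r$ from $c$ the expected degree drops to $(1 + o(1)) k^{2/3}$ once $D - r$ exceeds $\Omega(k^{-1/d} \log k)$. The $\Theta(k)$ gap in expected degrees, combined with Chernoff concentration of the Bernoulli sum $\deg(x, S)$, forces the actual top-$k$ degree vertices to lie in a slightly expanded ball of radius $r(1 + o(1))$ around $c$, which I absorb into $r$ by slightly shrinking $\epsilon$.

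For (ii), the count of admissible $S$ is at most
\[
r^{-d} \cdot \binom{O(n r^d)}{k - k^{2/3}} \cdot \binom{n}{k^{2/3}} \le n^{O(k)},
\]
obtained by first picking a center from an $(r/2)$-net of $[0,1]^d$, then the $k - k^{2/3}$ ``core'' vertices inside the corresponding ball of radius $r$, then the remaining $k^{2/3}$ outliers anywhere. The main obstacle I anticipate is obtaining per-$S$ failure probability small enough to beat $n^{O(k)}$: a naive vertex-wise Chernoff gives only $e^{-\Omega(k)}$, which does not suffice on its own. I would overcome this by noting that the deviation events for different vertices share the same underlying edges and by replacing a vertex-wise union bound with a uniform-convergence argument over the class of functions $x \mapsto \deg(x, S)$ indexed by admissible $S$, whose effective complexity (after discretizing ball centers on an $(r/\log n)$-net and using that $S'$ depends only on the distance profile to $S$) is small enough to deliver the required $e^{-\Omega(k \log n)}$ tail.
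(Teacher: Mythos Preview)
Your structural reading is right: the corollary is precisely the inductive step of Theorem~\ref{thm-all-t-multidim}, promoted to hold simultaneously for every admissible $S$ via a union bound over $\binom{n}{k} \le e^{k\log n}$ sets. Where you go wrong is in identifying the per-$S$ failure event, and this is what makes your ``obstacle'' and its proposed fix unnecessary.

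The statement only asks that at least $k-k^{2/3}$ points of $S'$ lie in a ball of radius $r$; it does not ask that \emph{all} of $S'$ does. So the bad event for a fixed $S$ is ``at least $k^{2/3}$ vertices outside the target ball make it into $S'$'', not ``some vertex outside does''. If the per-vertex deviation probability is $p$ (your $e^{-\Omega(k)}$, or the paper's sharper $e^{-\Theta(k^{1/3+2\epsilon})}$ for near-boundary points), then the per-$S$ failure probability is at most $\binom{n}{k^{2/3}}p^{k^{2/3}}$, which is $e^{-\Omega(k^{1+2\epsilon})}$ in the paper's setting. That \emph{does} beat $\binom{n}{k}\le e^{k\log n}$, since $k^{1+2\epsilon}\gg k\log n = \beta k\log k$. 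No uniform-convergence or VC-style machinery is needed; you just have to exploit the $k^{2/3}$ slack that is already built into the statement. Your proposed fix also has a factual error: the degrees $\deg(x,S)$ and $\deg(x',S)$ for distinct $x,x'$ depend on disjoint edge sets (the graph is directed), so they do not ``share the same underlying edges'', and there is nothing to be gained by coupling them.

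A secondary point: your per-$S$ argument only treats vertices at distance $\gg r$ from the center, where the expected-degree gap is $\Theta(k)$. The paper's proof of Theorem~\ref{thm-all-t-multidim} also handles vertices within $O(r)$ of the boundary via a two-case gradient analysis (imbalanced vs.\ balanced distribution of $S\cap I$), obtaining a gap of $\Theta(kr^2/\sigma^2)=\Theta(k^{1/3+2\epsilon})$ there. This is what lets the paper conclude that the covering ball for $S'$ does not grow (in fact it shrinks), rather than merely absorbing an expansion into $\epsilon$. Your coarser analysis may be reparable, but as written it does not control the near-boundary region.
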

\section{Analysis of the discrete $k$-cap process}\label{sec:discrete-proof}
In Appendix \ref{sec:prob-prelims}, we will introduce a few general results on probability which play key roles in the proof. In Section \ref{sec:a1}, we describe the structure of $A_1$ given the random initial firing set $A_0$. Lastly, in Section \ref{sec:at}, we investigate the evolution of $A_t$ as $t$ increases, and we prove the main theorem on the convergence of the process, stated in Theorem \ref{thm-main-at-multidim}.

\subsection{Characterization of $A_1$}
Since the initial firing set $A_0$ is chosen uniformly at random from $V$, $A_1$ is concentrated near dense sections of $A_0$. We argue that the probability that $x\in A_1$ can be characterized by conditioning on the number of points of $A_0$ within $\tilde{O}(\sigma)$ of $x$. By analyzing the distribution of dense subsets of a set of uniform random variables, we show that $A_1$ is contained within a union of $k^{1/4+o(1)}$ small balls. 

First, in Lemma \ref{lem:c1bound}, we give a lower bound on the first threshold, $C_0$, by examining the maximum number of uniform random points within a ball of radius $r$. There are, with high probability, at least $k$ vertices of $V$ which connect to the every point in the intersection of the ball with $A_0$. 

Next, Lemma \ref{lem-helper-a1} shows that if $|\{y \in A_0 : \lVert x-y\rVert = O(\sigma\sqrt{\ln \ln k})\}|$ is not large, $x$ has a very small probability of achieving an input of $C_0$. This implies that $A_1$ must be solely contained within high-density regions of $A_0$.

Finally, we combine these two lemmas to prove Theorem \ref{thm:A1}, restated later in this section. Since the number of high-density regions can be bounded of $A_0$ using a combinatorial argument, $A_1$ must be contained within $k^{1/4 + o(1)}$ small balls. 
\label{sec:a1}
\begin{lemma}
		\label{lem:c1bound}
		With probability $1-o(1)$ (where $\ln^{(3)} k = \ln \ln \ln k$):
		$$C_0 \geq \frac{\ln k}{\ln \ln k}\left(1+\frac{1}{4} \frac{\ln^{(3)} k}{\ln \ln k}\right)$$
	\end{lemma}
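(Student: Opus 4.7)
The plan is to show that with probability $1-o(1)$, strictly more than $k$ vertices $v \in V$ satisfy $F_0(v) \ge m$ for $m$ equal to the claimed bound; by the definition of $C_0$ as the smallest integer with $|\{v : F_0(v) > C_0\}| \le k$, this immediately forces $C_0 \ge m$.

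The core ingredient is a per-vertex lower bound of the form $\mathbb{P}(F_0(v) \ge m) \gtrsim A^m/m!$ for an absolute constant $A > 0$, obtained by exhibiting a single explicit favorable event. Fix $r = \Theta(\sigma)$. Under the random choice of $A_0$, $|A_0 \cap B(v,r)|$ is Binomial with mean $\lambda = k c_d r^d = \Theta(1)$, so $\mathbb{P}(|A_0 \cap B(v,r)| = m) \ge c \lambda^m e^{-\lambda}/m!$. Conditional on that event, each of the $m$ edges from $v$ into $A_0 \cap B(v,r)$ is present with probability at least $g(r) = e^{-r^2/(2\sigma^2)}$, independently of each other and of the $A_0$-event. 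Combining yields
\[
\mathbb{P}(F_0(v) \ge m) \;\ge\; \frac{\bigl(\lambda e^{-r^2/(2\sigma^2)}\bigr)^m}{m!}\, e^{-\lambda},
\]
and optimizing $r$ (so that $r^2/\sigma^2 = \Theta(1)$) delivers $\mathbb{P}(F_0(v) \ge m) \ge A^m/m!$.

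Next, apply Stirling to the specific $m$. Writing $\delta = \tfrac{1}{4}\tfrac{\ln^{(3)} k}{\ln \ln k}$ and expanding
\[
m \ln m \;=\; (1+\delta)\ln k\, \Bigl(1 - \tfrac{\ln^{(3)} k}{\ln \ln k} + o(\cdot)\Bigr) \;=\; \ln k \;-\; \tfrac{3}{4}\,\tfrac{\ln k\,\ln^{(3)} k}{\ln \ln k} \;+\; o(\cdot),
\]
one obtains $\ln m! = \ln k - \tfrac{3}{4}\tfrac{\ln k\,\ln^{(3)} k}{\ln \ln k} - m + o(1)$, hence $A^m/m! \ge k^{-1}(Ae)^m\, k^{(3/4)\ln^{(3)} k/\ln \ln k}$. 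Summing over the $n = k^\beta$ vertices with $\beta \ge 2+d$, the expected size of $\{v : F_0(v) \ge m\}$ is at least $k^{\beta - 1 + (3/4)\ln^{(3)} k/\ln \ln k}(Ae)^m$, which exceeds $k$ by a comfortable (polynomial for $\beta \ge 2+d$) margin.

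Finally, condition on $A_0$: the variables $F_0(v)$ for distinct $v \in V \setminus A_0$ depend on disjoint sets of edges and are therefore mutually independent. A preliminary standard step shows that $\mu_v = \sum_{y \in A_0} g(y,v) = \Theta(1)$ for all but $o(n)$ vertices (by concentration of the iid sum $\sum_{y \in A_0} g(y,v)$), so the per-vertex bound applies on a set of vertices of size $(1-o(1))n$; a Chernoff argument then turns the expected excess into $|\{v : F_0(v) \ge m\}| > k$ with probability $1-o(1)$. The main obstacle is calibrating the constants tightly enough to extract the specific factor $\tfrac{1}{4}$: it is essentially the largest coefficient of the second-order term $\tfrac{\ln^{(3)} k}{\ln \ln k}$ in $m$ for which the Stirling slack $\tfrac{3}{4}\tfrac{\ln k\,\ln^{(3)} k}{\ln \ln k}$ in $\ln(1/m!)$ remains large enough, after absorbing the subpolynomial loss $A^m = k^{o(1)}$ from the edge-probability factor, to both dominate $k$ in expectation and fuel the Chernoff concentration.
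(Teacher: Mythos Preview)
Your first-moment calculation is essentially right: the unconditional bound $\mathbb{P}(F_0(v)\ge m)\ge A^m/m!$ (via the event $|A_0\cap B(v,r)|=m$ followed by ``all $m$ edges present'') is valid, and the Stirling expansion correctly extracts the $\tfrac34\tfrac{\ln k\,\ln^{(3)}k}{\ln\ln k}$ slack, so $\mathbb{E}[N]\ge k^{\beta-1+o(1)}\gg k$. The gap is in the concentration step. After you condition on $A_0$, the per-vertex bound no longer applies: that bound was obtained by \emph{averaging} over $A_0$ (using that $|A_0\cap B(v,r)|$ is Binomial), and knowing only that $\mu_v=\sum_{y\in A_0}g(y,v)=\Theta(1)$ does not recover it. The tail $\mathbb{P}(F_0(v)\ge m\mid A_0)$ of a Poisson--binomial is not controlled by its mean alone; in the extreme case where one $p_i$ equals $\mu_v$ and the rest vanish, the tail is zero for $m\ge2$. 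So the sentence ``$\mu_v=\Theta(1)$ \dots\ so the per-vertex bound applies'' is a non sequitur, and Chernoff on the conditional indicators does not go through as written.

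More concretely, with your choice $r=\Theta(\sigma)$ the favorable event $|A_0\cap B(v,r)|\ge m$ is at best borderline: the maximum over all balls of radius $\Theta(\sigma)$ of $|A_0\cap B|$ is $\tfrac{\ln k}{\ln\ln k}(1+O(1/\ln\ln k))$, whereas $m=\tfrac{\ln k}{\ln\ln k}(1+\tfrac14\tfrac{\ln^{(3)}k}{\ln\ln k})$ exceeds this by a factor $1+\omega(1/\ln\ln k)$. Thus for typical $A_0$ the witnessing event you used may fail for every $v$, meaning your unconditional bound is carried by atypical $A_0$'s and says nothing after conditioning. The paper sidesteps this by working the other way round: it first fixes a high-probability property of $A_0$---the existence of a ball of radius $\tfrac12\sigma\sqrt{\ln\ln k}$ containing $M_k\ge m$ points (this larger radius is exactly what is needed to push the max-count above $m$)---and only then uses the edge randomness (independently across the $\Theta(n\sigma^d(\ln\ln k)^{d/2})$ vertices inside that ball, each connecting to all $M_k$ points with probability $k^{-(1+\eta)/2}$) together with a Chernoff bound. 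If you want to rescue the averaging route, you would need either a genuine second-moment argument handling the correlations through $A_0$, or to enlarge $r$ to $\Theta(\sigma\sqrt{\ln\ln k})$ so that the conditional favorable event holds w.h.p.---at which point you have essentially reproduced the paper's proof.
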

	\begin{proof}
		Consider a graph constructed on $A_0$ as follows. For any $a_1, a_2 \in A_0$, add an edge if $\lVert a_1 - a_2 \rVert < \frac{1}{2}\sigma \sqrt{\ln \ln k}$. Denote the maximum degree of this graph as $\Delta_k$. $\Delta_k + 1$ is the maximum intersection of $A_0$ with a circle of radius $r=\frac{1}{2}\sigma \sqrt{\ln \ln k}$. 
  
  By Lemma \ref{lem-max-degree}, 
  \begin{align*}
  \Delta_k &\geq \frac{\ln k}{\ln \ln k - \ln(\sigma^d k(1/4\ln \ln k)^{d/2})} \\
  &=\frac{\ln k}{\ln \ln k - d/2\ln^{(3)}k+O(1)}\\
  &\geq \frac{\ln k}{\ln \ln k} \left( 1 + \eta\right) = M_k \qquad \mbox{where } \eta = \frac{\ln^{(3)} k}{4\ln \ln k}. 
  \end{align*}

 As per above, there exists a ball with $M_k$ points of $A_0$ almost surely. Call this ball $I$.

		The maximum distance within the ball is $\sigma\sqrt{\ln \ln k}$. For any $x \in I$, the probability that $x$ connects to $M_k$ points is at least $g(x, x+v)^{M_k}$, where $v$ is a vector of size $\sigma\sqrt{\ln \ln k}$. Substituting:
		\begin{align*}
		    \mathbb{P}(F_0(x) \geq M_k) &\geq \exp\left(-\frac{(\sigma \sqrt{\ln \ln k})^2}{2\sigma^2}\right)^{M_k} =\exp\left(-\frac{\ln \ln k}{2}\right)^{(1+\eta)\frac{\ln k}{\ln \ln k}}\\&= k^{-\frac{(1+\eta)}{2}}
		\end{align*}
		By Lemma \ref{lem-n-uniform-random},  $|I\cap V| = \Omega\left(\frac{\Vol(I) \cdot n}{\log n}\right) = \Omega\left(\sigma^d (\ln \ln k)^{d/2}\cdot \frac{n}{\log n}\right)$. By the assumption that $\sigma = \Theta(k^{-1/d})$ and $n=k^{\beta}$, there are $k^{\beta-1 - o(1)}$ vertices of $G$ in $I$. 
		
		The expected number of points with degree $M_k$ from $|A_0 \cap I|$ is at least $k^{\beta-1 - o(1)}k^{- \frac{(1+\eta)}{2}}$, which is much greater than $k$. Therefore, there are at least $k$ points with input $M_k$ with high probability. This implies that the threshold $C_0$ is bounded from below by $M_k$. 
	\end{proof}

\begin{lemma}
	\label{lem-helper-a1}
		For any $x \in [0,1]^d$, define $B_r(x) = \{y\in[0,1]^d : \lVert x-y\rVert < r\}$.
		Let $r=\sigma \sqrt{24\beta\ln \ln k)}$. Suppose that the overlap between $B_r(x)$ and  $A_0$ is at most $\frac{3\ln k}{4\ln \ln k}$. Conditioned on this event, the probability that $x\in A_1$ is at most $\frac{1}{n^3}$. 
	\end{lemma}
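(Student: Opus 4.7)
The plan is to split $F_0(x) = F_0^{\text{in}}(x) + F_0^{\text{out}}(x)$ according to whether $y \in A_0$ lies inside or outside $B_r(x)$, bound $F_0^{\text{in}}$ by the deterministic count from the overlap hypothesis, and bound the tail of $F_0^{\text{out}}$ by a Chernoff argument in which the edge probability has been \emph{averaged} over the uniform position of each outside point of $A_0$. By Lemma~\ref{lem:c1bound}, $C_0 \geq M_k := (\ln k/\ln\ln k)(1+\eta)$ with probability at least $1 - e^{-\Omega(k)}$ (its proof gives exponential concentration, well beyond the stated $1-o(1)$), so up to an event of probability $\ll 1/n^3$ we may assume $C_0 \geq M_k$. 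The overlap hypothesis yields $F_0^{\text{in}}(x) \leq |A_0 \cap B_r(x)| \leq \frac{3\ln k}{4\ln\ln k}$ deterministically, so $\{x \in A_1\}$ forces
\[
F_0^{\text{out}}(x) \;\geq\; m^* \;:=\; M_k - \frac{3\ln k}{4\ln\ln k} \;=\; \frac{\ln k}{\ln\ln k}\!\left(\tfrac{1}{4}+\eta\right).
\]

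\textbf{Averaged Chernoff.} Conditional on $|A_0 \cap B_r(x)| = N$, the remaining $k-N$ points of $A_0$ are i.i.d.~uniform on $[0,1]^d \setminus B_r(x)$. Integrating each edge probability $g(x,Y)$ against this uniform distribution, $F_0^{\text{out}}(x)$ is stochastically dominated by $\mathrm{Bin}(k, q)$ where, by a Gaussian-tail estimate (using $r^2/(2\sigma^2) = 12\beta\ln\ln k$),
\[
q \;=\; \mathbb{E}_{Y}\!\left[g(x,Y)\,\bigm|\,Y\notin B_r(x)\right] \;=\; O\!\left(\sigma^d (\ln\ln k)^{(d-2)/2}(\ln k)^{-12\beta}\right).
\]
Because $k\sigma^d = \Theta(1)$, this gives $kq = O((\ln\ln k)^{(d-2)/2}(\ln k)^{-12\beta}) \ll m^*$, and the standard binomial Chernoff bound yields
\[
\mathbb{P}[F_0^{\text{out}}(x) \geq m^*] \;\leq\; \left(\frac{ekq}{m^*}\right)^{m^*} \;\leq\; \exp\!\bigl(-(12\beta+1)\,m^*\ln\ln k\,(1-o(1))\bigr) \;\leq\; k^{-3\beta - 1/4 + o(1)} \;<\; n^{-3},
\]
which, combined with the $e^{-\Omega(k)}$ contribution from the event $\{C_0 < M_k\}$, finishes the argument.

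\textbf{Main obstacle.} The delicate point is the averaging itself. Using the pointwise bound $g(x,y)\leq (\ln k)^{-12\beta}$ for $y\notin B_r(x)$ only yields $\mathbb{E}[F_0^{\text{out}}(x)] \leq k(\ln k)^{-12\beta}$; then the Chernoff exponent $m^*\ln(e\mu/m^*)$ is dominated by the \emph{positive} term $(\ln k)^2/(4\ln\ln k)$ and the bound is vacuous. Integrating out the uniform position of each outside point absorbs the Gaussian-normalization factor $\sigma^d = \Theta(1/k)$ — in other words, uses that a typical uniform point lies at distance $\Theta(1)$ rather than $\Theta(r)$ from $x$ — which shrinks the effective mean by a factor of $k$ and provides exactly the slack needed to push the Chernoff exponent below $-3\beta\ln k$.
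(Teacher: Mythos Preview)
Your approach is essentially identical to the paper's: split $F_0(x)$ into inside/outside contributions, bound the inside deterministically via the overlap hypothesis, average the Gaussian edge probability over the uniform position of each outside point of $A_0$ to pick up the crucial $\sigma^d = \Theta(1/k)$ factor, and then apply a binomial tail bound (the paper uses the KL-divergence form of Lemma~\ref{lem-bin-tight} and drops the $(1+\eta)$ from the threshold, while you keep it and use the $(e\mu/m)^m$ form, but the arithmetic lands in the same place, $k^{-(1+12\beta)/4+o(1)}<n^{-3}$). One minor caveat: your assertion that the proof of Lemma~\ref{lem:c1bound} delivers $\mathbb{P}(C_0 < M_k)\le e^{-\Omega(k)}$ is not supported---that proof rests on the asymptotic max-degree statement of Lemma~\ref{lem-max-degree}, which is only given in probability---though the paper itself also invokes the $C_0$ bound as if it were deterministic, so this is not a gap peculiar to your write-up.
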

	\begin{proof}

		Let $r = \alpha\sigma \sqrt{\ln \ln k}$. Suppose $|B_r(x) \cap A_0| \leq\frac{3\ln k}{4\ln \ln k}$. 
		
		By Lemma \ref{lem:c1bound}, if $x \in A_1$, then $F_0(x) \geq C_0 \geq \frac{\ln k}{\ln \ln k}$. Hence, by assumption, $x \in A_1$ only if it achieves an input of $M=\frac{\ln k }{4\ln \ln k}$ from outside $B_r(x)$. 
		
		Conditioned on $|B_r(x)\cap A_0| = \lambda k$, the remaining $(1-\lambda)k$ points are distributed uniformly on $[0,1]^d\setminus B_r(x)$. 
		
		By definition, $\mathbb{P}(\mathbbm{1}_{(x, Y)}\mid Y=y) = g(x,y)=\exp\left(-\lVert x-y\rVert^2/2\sigma^2\right)$. Let $f_r$ be the conditional distribution function of $\lVert y-x\rVert$, which has support on $(r, \sqrt{d}]$ ($\sqrt{d}$ being the longest diagonal of the hypercube $[0,1]^d$). Define $\partial B_r(x) = \{y\in[0,1]^d : \lVert x-y\rVert = r\}$ to be the spherical shell of radius $r$ around $x$. 
         
  $$f_r(\rho) = \frac{\Vol(\partial B_\rho(x) \cap [0,1]^d)}{1-\Vol(B_r(x))}$$

        The boundaries of the hypercube make $f_r$ somewhat difficult to calculate. Therefore, we will ignore the boundaries and set $f_r(\rho) < \frac{\Vol(\partial B_\rho(x))}{1-\Vol(B_r(x))}$. Note that $\Vol(\partial B_\rho(x)) = \frac{2\pi^{d/2}}{\Gamma(d/2)}\rho^{d-1}$.

		\begin{align*}
		\mathbb{E} [\mathbbm{1}_{(x,y)} \mid y\notin B_r(x)] &= \int_r^{\sqrt{d}} e^{-\rho^2/(2\sigma^2)} f_r(\rho) \, d\rho\\
        &\leq \frac{1}{1-\Vol(B_r(x))}\int_r^{\sqrt{d}} e^{-\rho^2/(2\sigma^2)}\Vol(\partial B_\rho(x))\, d\rho\\
        &\leq 2\frac{2\pi^{d/2}}{\Gamma(d/2)}\int_r^{\infty} \rho^{d-1} e^{-\rho^2/(2\sigma^2)}\, d\rho\\
        &= \frac{4\pi^{d/2}\sigma^{d}}{\Gamma(d/2)}\int_{r/\sigma}^{\infty} z^{d-1}e^{-z^2/2}\, dz\\
		\end{align*}

        This integral can be estimated by observing that for $r$ sufficiently large, $z^{d-1} e^{-z^2/2}$ is decreasing on $[r/\sigma, \infty]$. Therefore, 
        \begin{equation}
        \label{eq-gauss-tail}
        \int_{r/\sigma}^{\infty} x^{d-1}e^{-x^2/2}\, dx\leq \left(\frac{\sigma }{r}\right)^{d-2}\int_{r/\sigma}^{\infty} xe^{-x^2/2}\, dz=\left(\frac{r}{\sigma}\right)^{d-2}e^{-r^2/(2\sigma^2)}
        \end{equation}
        Returning to the original equation, for any $d\geq 1$:
        $$ \mathbb{E} [\mathbbm{1}_{(x,y)} \mid y\notin B_r(x)] \leq\frac{4\pi^{d/2}\sigma^{d}}{\Gamma(d/2)}\left(\frac{r}{\sigma}\right)^{d-2}e^{-r^2/(2\sigma^2)}  = \Theta(1)\sigma^{d}\left(\frac{r}{\sigma}\right)^{d-2}e^{-r^2/2\sigma^2}$$

		Substituting $r=\sigma \alpha \sqrt{\ln \ln k}$, this equals $\Theta(1)\sigma^d (\ln k)^{-\alpha^2/2} ( \sqrt{\ln \ln k})^{d-2}$. Again recalling $\sigma^d = \Theta(1/k)$, 
  $$\mathbb{E} [\mathbbm{1}_{(x,y)} \mid y\notin B_r(x)] \leq p = O(1/k)(\ln k)^{-\alpha^2/2} (\alpha \sqrt{\ln \ln k})^{d-2}$$.
     We can bound the distribution of $F_0(x)$ by a binomial with probability $p$. In particular, $\mathbb{P}(F_0(x) > C_0)$ is bounded above by $\mathbb{P}\left(B > \frac{\ln k}{4 \ln \ln k}\right)$, where $B \sim Bin(k,p)$. This quantity can be tightly bounded using Lemma~\ref{lem-bin-tight}. 
            $$\mathbb{P}\left(B > \frac{\ln k}{4 \ln \ln k}\right) \leq \exp\left( - k D\left(\frac{\ln k}{4k\ln \ln k} \mid \mid p\right)\right)$$
        Bounding the divergence term:
        \begin{align*}
            D\left(\frac{\ln k}{4k\ln \ln k} \mid \mid p\right) &= \frac{\ln k}{4k\ln \ln k}\ln \frac{\ln k}{4kp\ln \ln k} + (1-\frac{\ln k}{4k\ln \ln k}) \ln \frac{1-\frac{\ln k}{4k\ln \ln k}}{1-p}\\
            &\geq \frac{\ln k}{4k\ln \ln k}\ln \frac{\ln k}{4kp\ln \ln k} + p-\frac{\ln k}{4k\ln \ln k} \text{ using }\ln x \geq 1-1/x \;\forall x>0\\
            &=\frac{\ln k}{4k\ln \ln k}\ln \frac{\ln k^{1+\alpha^2/2}}{O(1)(\ln \ln k)^{1 + (d-2)/2}} + \Theta(\frac{1}{k})(\ln k)^{-\alpha^2/2} (\alpha \sqrt{\ln \ln k})^{d-2}-\frac{\ln k}{4k\ln \ln k}\\
            &\geq\frac{\ln k}{k}\left[\frac{1+\alpha^2/2}{4} - \frac{1+(d-2)/2}{4}\frac{\ln^{(3)}k}{\ln \ln k} - \frac{O(1)}{\ln \ln k}\right]
        \end{align*}
		
		Plugging this into the original bound, $\mathbb{P}\left(B > \frac{\ln k}{4 \ln \ln k}\right) \leq k^{-\frac{1+\alpha^2/2}{4} + o(1)}$

		Since $n = k^{\beta}$ by definition, we can choose $\alpha = \sqrt{24\beta}$. Then, $\mathbb{P}(F_0(x) > C_0) \leq \mathbb{P}\left(B > \frac{\ln k}{4 \ln \ln k}\right)\leq 1/n^3$
	\end{proof}
\theoremAone*

\begin{proof}[Proof of Theorem \ref{thm:A1}]

        We apply Lemma \ref{lem-helper-a1} and take the union bound over all $x$ in the graph to conclude the following: with probability, $1-1/n^2$, $x\in A_1$ only if the ball $B_r(x)$, where $r=\sigma\sqrt{24\beta\ln \ln k}$,   contains more than $\frac{3\ln k}{4\ln \ln k}$ points. Since $1/n^2$ is summable, this is true almost surely. 
        
        Since the expected number of points of $A_0$ in $B_r(x)$ is $k * \Vol(B_r(x)) = O((\ln \ln k)^{d/2})$, the number of such high density regions will be relatively small. 
        
        To argue this, we consider $d+1$ overlapping partitions of $[0, 1]^d$ into boxes. Let $L = 2\sigma\sqrt{24\beta\ln \ln k}$. First, tile $[0,1]^d$ with boxes of width $L$. Then, shift each interval by half its width in each dimension, leading to $d$ alternate partitions of $[0,1]^d$.
        
        For any $x$, the ball $B_{\sigma \sqrt{24\beta\ln \ln k}}(x)$ must be fully contained in a box $I_i$ in at least one partition for some index $i$. Consider the probability that a given box $I_i$ contains $\frac{3\ln k}{4\ln \ln k}$ points of $A_0$. 
        
        For each point in $A_0$, the probability that it lands in $I_i$ is $\Vol(I_i) = (2\sigma\sqrt{24\beta \ln \ln k})^d = \Theta(1)(\ln \ln k)^{d/2}/k$. Therefore, the number of points in $I_i$ is $|A_0 \cap I_i| \sim Bin(k, \Vol(I_i))$. 
        
        Using the binomial bound in Lemma \ref{lem-bin-tight}, the probability that $|I_i \cap A_0|$ exceeds $3\ln k/4\ln \ln k$ is at most:
        \begin{equation}
        \label{eq-interval-bound}
            \mathbb{P}\left(|I_i \cap A_0|>\frac{3\ln k}{4\ln \ln k}\right) \leq \exp\left(-kD\left(\frac{3\ln k}{4k \ln \ln k} \mid \mid \Vol(I_i)\right)\right)
        \end{equation}
Bounding the divergence term (Using the inequality $\ln x \geq 1-1/x$):
        \begin{align*}
            D&\left(\frac{3\ln k}{4k \ln \ln k} \mid \mid \Vol(I_i)\right)=\frac{3\ln k}{4k \ln \ln k} \ln \frac{\Theta(1)\ln k}{(\ln \ln k)^{1+d/2}} + \left(1-\frac{3\ln k}{4k \ln \ln k}\right) \ln \frac{1-\frac{3\ln k}{4k \ln \ln k}}{1-\Theta(1)(\ln \ln k)^{d/2}/k}\\
            &\geq\frac{3\ln k}{4k} - \frac{O(1)\ln k\ln^{(3)}k}{k\ln \ln k}  + \left(1-\frac{3\ln k}{4k \ln \ln k}\right)\left(1-\frac{1-\Theta(1)(\ln \ln k)^{d/2}/k}{1-\frac{3\ln k}{4k \ln \ln k}}\right)\\
            &\geq \frac{3\ln k}{4k} - \frac{O(1)\ln k\ln^{(3)}k }{k\ln \ln k}  -\frac{3\ln k}{4k \ln \ln k}+\Theta(1/k)(\ln \ln k)^{d/2}\\
            &=\frac{1}{k}\left[\frac{3}{4}\ln k -\frac{O(1)\ln k\ln^{(3)}k}{\ln \ln k}\right]\\
        \end{align*}
        
        Substituting back into equation \ref{eq-interval-bound}:
        $$\mathbb{P}\left(|I_i \cap A_0|>\ln k/\ln \ln k\right) \leq \exp\left(-\frac{3}{4}\ln k +\frac{O(1)\ln k\ln^{(3)}k}{\ln \ln k}\right)=k^{-\frac{3}{4}+\frac{O(1)\ln^{(3)}k}{\ln \ln k}}$$
        
        There are $\frac{d+1}{\Vol(I_i)} = \Theta\left(\frac{k}{(\ln \ln k)^{d/2}}\right)$ such intervals. The size of $|B_i \cap A_0|$ for each partition can be thought of as the loads in a `balls into bins' problem; thus, the number of points in non-overlapping boxes are negatively correlated.
        
        With high probability, the number of such intervals with enough points is $k^{1/4 + o(1)}$. 
        
        The loads of the bins $I_i$ are invariant to permutation; therefore, the probability that two intervals within a distance of $2\sigma \sqrt{\ln n}$ have a large enough load is $o(1)$.
        
        The same can be said for each shifted partition. Therefore, the bins which achieve high input are of size $\Theta(L) = \Theta(\sigma\sqrt{\ln \ln k})$ and separated by a distance of $2\sigma \sqrt{\ln n}$. 
\end{proof}
\subsection{Convergence of $A_t$}
	\label{sec:at}
	In this section, we will prove the main Lemma~\ref{lem:main_At-multidim}, which will lead to the proof of Theorem~\ref{thm-main-at-multidim}.
    In Theorem~\ref{thm:A1}, we have proved that $A_1$ can be covered by $k^{1/4+o(1)}$ balls of radius $O(\sigma \sqrt{\ln \ln k})$, and separated by at least $2\sigma \sqrt{\ln n}$. There are two key properties of this system which make the analysis tractable. First, the separation condition allows us to analyze each interval as a separate system. If $x \in I_a$ and $y \in I_b$, $g(x, y) < \exp\left(-\frac{4\sigma^2\ln n}{2\sigma^2}\right) = n^{-2(1-o(1))}$. Therefore, with high probability, the subgraphs defined by $I_a$ and $I_b$ are independent; this means that all $x\in A_t$ will not receive input from outside its interval. Second, since the graph is directed, the edge $\mathbbm{1}_{(x,y)}$ is independent of $\mathbbm{1}_{(y,x)}$. Additionally we prove in Lemma \ref{lem:independence} that for any $t = \polylog(k)$, all points which fire at $t$ are `new' (i.e., they have not fired at a previous step) with high probability. This lets us make the simplifying assumption that $F_t(x)$ is a sum of independent indicators. Using these two key simplifications, we prove that with high probability, each separated interval shrinks to a size of $O(\sigma\sqrt{\ln k/k})$. 

We will suppose that the hypothesis of Theorem~\ref{thm:A1} holds for a step $t\geq1$; $A_t$ can be covered by $O(k^{1/4+o(1)})$ sufficiently separated balls. Then, we will prove that the separation and coverage continue to hold by induction. 
    
    Define $A_t \subset I_1 \cup I_2 \cup \dots \cup I_i$, where each $I_j$ is a ball of radius $O(\sigma \sqrt{\ln \ln k})$, and all pairs $I_a$, $I_b$ are separated by a gap of at least $2(1-o(1))\sigma \sqrt{\ln n}$.  Also define $E[x] = \mathbb{E}F_t(x) = \sum_{z \in A_t}g(x, z)$, and $V[x]^2 = \Var F_t(x) = \sum_{z \in A_t}g(x, z)(1-g(x, z))$. Note that $E[x]$ and $V[x]$ depend implicitly on $t$.

    The following lemmas will be used to bound $C_t$ at each time step. Using this, we can get precise bounds on $\mathbb{P}(F_t(x) > C_t)$.
	\begin{lemma} 
		\label{lem-Ex}
  
	 For any vector direction v and point $x\in [0,1]^d$, $\left|\nabla_v E[x]\right|< \frac{k}{\sigma}\sqrt{d/e}$. 
	\end{lemma}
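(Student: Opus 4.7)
The plan is to treat this as a direct calculus exercise on $E[x]=\sum_{z\in A_t}g(x,z)$ with $g(x,z)=\exp(-\lVert x-z\rVert^2/(2\sigma^2))$, and to obtain the dimension factor $\sqrt{d}$ by bounding each partial derivative and then converting the resulting $\ell_\infty$ bound on $\nabla E[x]$ into an $\ell_2$ bound.

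First I would compute the gradient explicitly. Differentiating the Gaussian kernel gives
\[
\nabla_x g(x,z) \;=\; -\frac{x-z}{\sigma^{2}}\,g(x,z),
\qquad
\partial_i E[x] \;=\; -\frac{1}{\sigma^{2}}\sum_{z\in A_t}(x_i-z_i)\,g(x,z).
\]
For the $i$-th partial I would factor $g(x,z)=\exp(-(x_i-z_i)^2/(2\sigma^2))\cdot\exp(-\sum_{j\neq i}(x_j-z_j)^2/(2\sigma^2))$ and drop the second (non-positive-exponent) factor, which is at most $1$. That gives the summand-wise bound
\[
|x_i-z_i|\,g(x,z)\;\le\;|x_i-z_i|\,\exp\!\bigl(-(x_i-z_i)^2/(2\sigma^2)\bigr).
\]

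Next I would maximize the one-variable function $\phi(r)=r\,e^{-r^2/(2\sigma^2)}$ over $r\ge 0$. Setting $\phi'(r)=0$ gives $r=\sigma$, at which $\phi(\sigma)=\sigma/\sqrt{e}$, so every summand is at most $\sigma/\sqrt{e}$. Summing over the $k$ elements of $A_t$ and dividing by $\sigma^2$ yields
\[
|\partial_i E[x]|\;\le\;\frac{k}{\sigma\sqrt{e}} \qquad\text{for every coordinate }i.
\]

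Finally, to pass from coordinate bounds to an arbitrary direction $v$ (unit vector), I would use Cauchy--Schwarz together with the standard inequality $\lVert u\rVert_2\le\sqrt{d}\,\lVert u\rVert_\infty$ for $u\in\mathbb{R}^d$:
\[
|\nabla_v E[x]|\;\le\;\lVert \nabla E[x]\rVert_2\;\le\;\sqrt{d}\,\max_i|\partial_i E[x]|\;\le\;\frac{k}{\sigma}\sqrt{d/e}.
\]
There is no real obstacle here — the only non-trivial ingredient is the single-variable maximization of $\phi$ — and the $\sqrt{d}$ factor arises solely from the slack in bounding the Euclidean norm of the gradient by a coordinate-wise maximum (a direct Cauchy--Schwarz argument on the sum $\sum_z (x-z)g(x,z)$ would in fact remove the $\sqrt{d}$, but the stated form suffices for the downstream estimates).
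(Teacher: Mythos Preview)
Your proof is correct and follows essentially the same approach as the paper: compute the partial derivatives, bound each summand by the one-variable maximum $\sigma/\sqrt{e}$ of $r\mapsto r\,e^{-r^2/(2\sigma^2)}$, and then pass from the coordinate-wise bound to the directional derivative via the $\sqrt{d}$ factor. Your writeup is in fact more careful than the paper's (you make explicit the factoring of $g$ and the Cauchy--Schwarz step), and your closing remark that the $\sqrt{d}$ is removable is correct but, as you note, unnecessary for the application.
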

\begin{proof}[Proof of Lemma~\ref{lem-Ex}]
	
    For any $i\in\{0,1,\dots,d-1\}$:
	$$\frac{\partial}{\partial x_i} E[x]= \sum_{z \in A_t}\frac{\partial}{\partial x_i}g(x, z) = \sum_{z \in A_t} -\frac{x_i-z_i}{\sigma^2} \exp\left(\frac{-\lVert  x-z \rVert^2}{2\sigma^2}\right)$$
	Let $\hat{x} = \frac{x-z}{\sigma}$. The maximum of $\left|\hat{x} \exp\left(\frac{-\hat{x}^2}{2}\right)\right|$ occurs at $e^{-1/2}$. Thus, we have 
	$$ \left|\frac{\partial}{\partial x_i}  E[x]\right| \leq \sum_{z \in A_t}\frac{|x_i-z_i|}{\sigma^2} \exp\left(\frac{-\lVert x-z\rVert^2}{2\sigma^2}\right)<\sum_{z \in A_t}\frac{1}{\sigma} e^{-1/2} =  \frac{k}{\sigma}e^{-1/2}$$
    For any unit vector $v$:
    $$\left|\nabla_v  E[x]\right| < \frac{k}{\sigma} e^{-1/2} v \cdot \mathbf{1} = \sqrt{d} e^{-1/2} \frac{k}{\sigma}$$
\end{proof}
	For this proof to be viable, we will need to show that $F_t(x)$ is the sum of independent indicators. By definition of the  graph structure, each edge $\mathbbm{1}_{(y, x)}$ is drawn independently. However, we will also need to show that, for each $y \in A_t$, its edges $\mathbbm{1}_{(y, x)}$ have not been used in previous computations. This follows from the next lemma.
	\begin{lemma} 
		\label{lem:independence}
		Suppose $t = O((\ln k)^c)$ for a constant $c$. Then, with probability at least $1-\frac{1}{k^{1/2-o(1)}}$ 
		$$A_0 \cap A_1 \cap \dots \cap A_t =\emptyset$$ 
	\end{lemma}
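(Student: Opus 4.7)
The plan is to prove the stronger statement that $\mathbb{E}|A_0 \cap A_1| \leq 1/k^{1/2 - o(1)}$; since the $(t+1)$-fold intersection is contained in $A_0 \cap A_1$, Markov's inequality then yields the claim uniformly in $t$. By exchangeability of the labels of $V$, $\mathbb{E}|A_0 \cap A_1| = n \cdot \mathbb{P}(v \in A_0 \cap A_1)$ for any fixed $v \in V$, so it suffices to bound this single-vertex probability.

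I decompose according to the event $\mathcal{D}_v = \{|A_0 \cap B_r(h_v)| \geq 3\ln k/(4 \ln \ln k)\}$ with $r = \sigma\sqrt{24\beta \ln \ln k}$ and $h_v$ denoting $v$'s hidden variable. On $\mathcal{D}_v^c$, Lemma~\ref{lem-helper-a1} gives $\mathbb{P}(v \in A_1 \mid v \in A_0, \mathcal{D}_v^c) \leq 1/n^3$, contributing at most $k/n^4$ to $\mathbb{P}(v \in A_0 \cap A_1)$. On $\mathcal{D}_v$ I write
\[
\mathbb{P}(v \in A_0 \cap A_1 \cap \mathcal{D}_v) \leq \mathbb{P}(v \in A_0) \cdot \mathbb{P}(\mathcal{D}_v \mid v \in A_0) \cdot \mathbb{P}(v \in A_1 \mid v \in A_0, \mathcal{D}_v),
\]
estimating each factor in turn. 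The first factor equals $k/n = k^{1 - \beta}$. For the second, conditioning on $v \in A_0$ the remaining $k-1$ points of $A_0$ are i.i.d.\ uniform on $[0,1]^d$, so $|A_0 \cap B_r(h_v)| - 1$ is Binomial with mean $\Theta((\ln \ln k)^{d/2})$; the same tight binomial tail bound (Lemma~\ref{lem-bin-tight}) that underlies Theorem~\ref{thm:A1} gives $\mathbb{P}(\mathcal{D}_v \mid v \in A_0) \leq k^{-3/4 + o(1)}$.

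For the third factor I invoke local exchangeability inside the dense ball. Because the positions of $V$ are i.i.d.\ uniform, the labels of $V \cap B_r(h_v)$ are exchangeable as random positions and $A_0 \cap B_r(h_v)$ is conditionally a uniform random subset of them. Averaging the indicator $\mathbbm{1}[u \in A_1]$ over $u$ ranging over $V \cap B_r(h_v)$ yields $\mathbb{E}|A_1 \cap B_r(h_v)|/|V \cap B_r(h_v)|$. Using the trivial bound $|A_1 \cap B_r(h_v)| \leq k$ and a Chernoff estimate $|V \cap B_r(h_v)| = \Theta(n\,\mathrm{vol}(B_r)) = k^{\beta - 1 + o(1)}$ with high probability, the third factor is at most $k^{2-\beta+o(1)}$. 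Multiplying the three factors and summing over $v \in V$ gives $\mathbb{E}|A_0 \cap A_1| \leq k^{9/4 - \beta + o(1)}$, which is at most $k^{-3/4 + o(1)} \leq 1/k^{1/2-o(1)}$ whenever $\beta \geq 2+d \geq 3$.

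The main obstacle is the local exchangeability step underlying the third factor: while global label symmetry automatically makes the unconditional probability label-invariant, vertices closer to the centroid of $A_0 \cap B_r(h_v)$ have strictly larger expected degrees, so after conditioning on a specific hidden variable $h_v$ the label $v$ is no longer truly exchangeable with the other labels in $V \cap B_r(h_v)$. The resolution is to re-average over which label in $V \cap B_r(h_v)$ plays the role of the ``tagged'' vertex $v$, which is legitimate because the joint distribution of positions and $A_0$-membership restricted to $V \cap B_r(h_v)$ is invariant under relabeling; this collapses the conditional probability to the symmetric ratio $|A_1 \cap B_r(h_v)|/|V \cap B_r(h_v)|$ and produces the polynomial saving needed over the trivial bound.
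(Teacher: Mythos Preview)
Your high–level strategy—bound $\mathbb{E}|A_0\cap A_1|$ and note that the $(t{+}1)$–fold intersection sits inside $A_0\cap A_1$—is a legitimate route to the \emph{literal} statement, and it is genuinely different from the paper's proof.  The paper does not single out the step $0\to 1$ at all: it argues inductively that for every $s$ and every vertex $x$, $p_s(x)\le k^{-\beta+d/2+1+o(1)}$ by a \emph{local comparison} (there are $k^{\beta-1-d/2-o(1)}$ vertices within $\sigma k^{-1/2}$ of $x$, all with expected input within $O(\sqrt{k})$ of $\mathbb{E}F_s(x)$ by the gradient bound, hence comparable firing probability; only $k$ can fire).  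This pointwise bound is then union–bounded over the $sk$ vertices in $A_0\cup\cdots\cup A_s$.  In particular the paper actually establishes \emph{pairwise} disjointness of $A_0,\dots,A_t$, which is what the subsequent ``$F_t(x)$ is a sum of fresh independent indicators'' application needs; your containment $A_0\cap\cdots\cap A_t\subset A_0\cap A_1$ proves only the stated full intersection and would not by itself support that later use.

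The substantive gap in your argument is the third factor $\mathbb{P}(v\in A_1\mid v\in A_0,\mathcal{D}_v)\le k^{2-\beta+o(1)}$.  Your ``local exchangeability'' justification does not go through: the conditioning event $\mathcal{D}_v=\{|A_0\cap B_r(h_v)|\ge M\}$ is centered at $h_v$, so once you condition on it, $v$ is no longer interchangeable with a uniformly random $u\in V\cap B_r(h_v)$.  Your proposed fix—``re-average over which label plays the role of the tagged vertex''—does not repair this, because re-tagging changes $h_v$ and hence both $B_r(h_v)$ and $\mathcal{D}_v$; carrying the averaging through one lands back at $\frac{1}{n}\mathbb{E}[\#\{u:u\in A_0\cap A_1,\ \mathcal{D}_u\}]\approx \frac{1}{n}\mathbb{E}|A_0\cap A_1|$, which is exactly the quantity you are trying to bound.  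Concretely, under $\{v\in A_0,\mathcal{D}_v\}$ the vertex $v$ sits at the center of a dense clump of $A_0$, so its expected input $\mathbb{E}F_0(v)$ can be at or near the maximum, and there is no symmetry forcing $\mathbb{P}(v\in A_1)$ down to the ball–average $k/|V\cap B_r(h_v)|$.  The missing ingredient is precisely the paper's local comparison in the \emph{much smaller} ball $B_{\sigma k^{-1/2}}(h_v)$, where the gradient bound (Lemma~\ref{lem-Ex}) guarantees all $k^{\beta-1-d/2-o(1)}$ vertices there have firing probability within a constant factor of $p_0(v)$; that, not exchangeability, is what produces the polynomial saving.
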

\begin{proof}
		Suppose at time $s$, $\{A_0, A_1 \dots A_s\}$ are pairwise disjoint. Therefore, at time $s$, the edges $\{\mathbbm{1}_{(y,x)}: y \in A_s, x\in [n]\}$ have not been examined by the $k$-cap function, and they are conditionally independent. 
  
		Now, we will compute the probability that $|A_{s+1} \cap A_i| > 0$ for some $i\leq s$. By Lemma~\ref{lem-n-uniform-random}, there are at least $\Theta(1)\sigma^d k^{-d/2} n/\log n = \Theta(k^{\beta-d/2-1}/\log n)$ points within $\sigma k^{-1/2}$ of $x$. By Lemma~\ref{lem-Ex}, for all $z \in B_{\sigma k^{-1/2}}(x)$,  $\mathbb{E}F_s(z) > \mathbb{E}F_s(x) - (ek)^{1/2}$. For such a $z$, $\mathbb{P}(F_s(z) > C_{s+1})$ differs from $\mathbb{P}(F_s(z) > C_{s+1})$ by at most a constant factor.  
        Thus, the probability that any given $x$ is chosen is $p_{s}(x) <k^{-\beta + d/2 + 1 + o(1)} < k^{-3/2+o(1)}$ by the definition of $\beta$. There are $s k$ points in $A_0 \cup A_1 \cup \dots \cup A_s$, so the probability that any given $y \in A_0 \cup A_1 \cup \dots \cup A_s$ is in $A_{s+1}$ is at most $sk*k^{-3/2+o(1)} = O(k^{-1/2 + o(1)})$.  Therefore, the probability that $(A_0 \cup A_1 \cup \dots \cup A_s)\cap A_{s+1} = \emptyset$ is at least $1- \frac{1}{k^{1/2 - o(1)}}$. 

		The probability that this holds for all $s < t$ is $(1- \frac{1}{k^{1/2 - o(1)}})^{t} \approx 1 - \frac{t}{k^{1/2 - o(1)}}$. Since $t$ is $\polylog(k)$, this is at least $1 - \frac{1}{k^{1/2 - o(1)}}$. 
	\end{proof}

    Lemma~\ref{lem:independence} implies that, conditioned on the set $A_t$, $F_t(x) = \sum_{z \in A_t} \mathbbm{1}_{(z,x)}$ is a sum of independent indicators (with no dependence on previous time steps). Therefore, $C_t$ can be bounded using standard concentration bounds as follows:

	\begin{lemma} 
		\label{lem:ct-bound}
		At any step $t = O((\log k)^c)$, assuming the conditions of Lemma \ref{lem:main_At-multidim}, with high probability, $C_t \geq \max_x E[x]$
	
	\end{lemma}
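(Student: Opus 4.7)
The plan is to exhibit with high probability at least $k+1$ distinct vertices $x \in V$ satisfying $F_t(x) > E^*$, where $E^* := \max_x E[x]$. Since $C_t$ is defined as the smallest integer with $|\{x : F_t(x) > C_t\}|\le k$, any such event forces $C_t \ge \lceil E^*\rceil \ge E^*$ and completes the proof.

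\emph{Step 1 (A small ball where $E$ is nearly maximal).} Fix $x^* \in \argmax_x E[x]$. By the Lipschitz bound of Lemma~\ref{lem-Ex}, $E[x] \ge E^* - r(k/\sigma)\sqrt{d/e}$ on $B := B_r(x^*)$. I would choose $r = c\,V[x^*]\,\sigma/k$ for a small absolute constant $c>0$, so the deficit is at most $c\sqrt{d/e}\,V[x^*]$. The hypotheses of Lemma~\ref{lem:main_At-multidim} give the lower bound $r(I_j) = \Omega(\sigma\sqrt{\ln k/|I_j \cap A_t|})$, which in turn forces enough geometric spread in $A_t$ to yield $V[x^*]^2 = \sum_{z \in A_t} g(x^*,z)(1-g(x^*,z)) = \Omega(\ln k)$; hence $r = \Omega(\sigma\sqrt{\ln k}/k)$.

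\emph{Step 2 (Many graph vertices in $B$).} The expected number of graph vertices in $B$ is $n \cdot \Theta(r^d)$, and a standard Chernoff bound gives $|B \cap V| = \Theta(nr^d)$ with high probability. Plugging in $\sigma^d = \Theta(1/k)$ and $n = k^\beta$ with $\beta \ge 2+d$, this is $\Omega((\ln k)^{d/2}\, k^{\beta-d-1}) = \omega(k)$, providing far more than $k$ candidate vertices.

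\emph{Step 3 (Anticoncentration and Chernoff).} By Lemma~\ref{lem:independence}, for each $x \in V$ the quantity $F_t(x) = \sum_{z \in A_t}\mathbbm{1}_{(z,x)}$ is (with high probability) a sum of $k$ independent $\{0,1\}$ variables, and the sums for distinct vertices are mutually independent because they involve disjoint sets of directed edges. Applying Berry--Esseen to each such sum -- whose third-moment error is at most $O(1/V[x]) = O(1/\sqrt{\ln k}) = o(1)$ -- yields
\begin{equation*}
\mathbb{P}(F_t(x) > E^*) \;\ge\; \mathbb{P}\!\left(F_t(x) > E[x] + O(V[x])\right) \;\ge\; \tfrac{1}{3}
\end{equation*}
for every $x \in B$. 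Then $N := |\{x \in B\cap V : F_t(x) > E^*\}|$ stochastically dominates $\mathrm{Bin}(|B\cap V|, 1/3)$; since $|B \cap V| = \omega(k)$, a Chernoff bound gives $N > k$ except with probability $e^{-\Omega(k)}$, as desired.

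\textbf{Expected main obstacle.} The delicate ingredient is the variance lower bound $V[x^*]^2 = \Omega(\ln k)$ once $A_t$ has collapsed into a ball of radius $\Theta(\sigma\sqrt{\ln k/k})$: every $g(x^*,z) = 1 - O(\ln k/k)$ lies near $1$, making each individual Bernoulli variance tiny. Justifying the $\Omega(\ln k)$ total requires combining the radius lower bound $r(I_j) = \Omega(\sigma\sqrt{\ln k/m_j})$ from Lemma~\ref{lem:main_At-multidim} with the uniformity of vertex locations to certify that a constant fraction of the $m_j$ points in $A_t \cap I_j$ sit at distance $\Theta(r(I_j))$ from $x^*$, each contributing $\Theta(\ln k/m_j)$ to $V[x^*]^2$ and summing across $j$ to $\Theta(\ln k)$; the rest of the argument is a routine application of Berry--Esseen and Chernoff.
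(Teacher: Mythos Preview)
Your overall architecture matches the paper's: locate a small ball around the maximizer $x^*$, find $\omega(k)$ graph vertices inside it, and use a normal-approximation (the paper uses Volkova's refinement, you use Berry--Esseen) to show each such vertex exceeds $E^*$ with constant probability, then Chernoff. Steps 2 and 3 are essentially the same as in the paper.

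The genuine gap is your variance lower bound $V[x^*]^2=\Omega(\ln k)$. You derive it from the hypothesis $r(I_j)=\Omega(\sigma\sqrt{\ln k/m_j})$, but that hypothesis is a lower bound on the radius of the \emph{covering} ball $I_j$, not on the actual geometric spread of $A_t\cap I_j$ inside it. Nothing in the lemma's assumptions prevents all $m_j$ points of $A_t\cap I_j$ from sitting in a sub-ball of radius $o(\sigma\sqrt{\ln k/m_j})$ around $x^*$, in which case every $g(x^*,z)=1-o(\ln k/m_j)$ and $V[x^*]^2=o(\ln k)$. Your proposed fix (``uniformity of vertex locations'') does not help: the vertices of $V$ are uniform, but $A_t$ is the output of the $k$-cap selection and is decidedly not uniform within $I_j$---indeed the whole thrust of Lemma~\ref{lem:main_At-multidim} is that $A_t$ concentrates ever more tightly.

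The paper sidesteps this by a case split rather than a variance lower bound: it uses the fixed radius $\sigma k^{-1}\log n$ (deficit $O(\log n)$) and argues that if $V[y]^2$ falls below $(\log k)^2$ then necessarily $E[y]=k-o(1)$, so $\prod_z g(y,z)=1-o(1)$ and more than $k$ vertices attain $F_t=k$, forcing $C_t=k\ge E^*$. You should replace your variance argument with this dichotomy; the rest of your proof then goes through.
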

\begin{proof}[Proof of Lemma~\ref{lem:ct-bound}] 

	By Lemma~\ref{lem-n-uniform-random}, for any point $x$, there are $\Omega(n \cdot (\sigma k^{-1}\log n)^d)$ points of $V$ in a radius of $\sigma k^{-1}\log n$ of $x$. By the assumption that $n\geq k^{2+d}$, this is $\Omega(k\log n)$.

For any $y \in B_{\sigma k^{-1}}\log n(x)$, Lemma \ref{lem-Ex} implies:

$$E[y] > E[x] - \sqrt{d/e}\log n$$

	Therefore, if $C_t = E[x]$, then there are $\tilde{\Omega}(k)$ points where $E[y]  > C_t -O(\log n)$. Here we will use Lemma~\ref{lem:independence}, which tells us that each $F_t(y)$ is independent conditioned on $A_t$. Hence, Chernoff type bounds apply; if $\mathbb{P}\left(F_t(y) > E[y] + O(\log n)\right) = \Theta(1)$, then with high probability there are $k$ points that exceed $C_t$.

	Using the loose bound given in \cite{volkova1996refinement}, we can bound $\mathbb{P}(F_t(x) > C_t)$ using the CDF of the normal distribution. For any sum of independent indicators $S$ with mean $\mu$ and variance $\sigma$, the CDF can be approximated as follows:
	
	$$\sup_m \left|\mathbb{P}(S \leq m)  - G\left(\frac{m + 1/2 - \mu}{\sigma}\right)\right| \leq \frac{\sigma + 3}{4 \sigma^3}<\frac{1}{\sigma^2}$$
	Where $G(x) = \Phi(x) - \frac{\gamma}{6 \sigma^3}(x^2-1)\frac{e^{-x^2/2}}{\sqrt{2\pi}}$, and $\gamma = \mathbb{E}[(S-\mu)^3]$ is the skewness. This holds for any $\sigma \geq 10$. 

    We can assume that the variance of $y$, $V[y]^2$, exceeds $(\log k)^2$; otherwise, $E[y] = \sum_{y \in A_t} = k(1-o(1/k))$, so we can assume that $C_t = k$. 
	
	Fix $y \in B_{\sigma k^{-1}\log n}(x)$. From the above equation, we find that for any $t>0$:
	
	$$\mathbb{P}\left(F_t(y) > E[y] + tV[y]-1/2\right) > 1-\left[G(t) + \frac{1}{V[y]^2}\right]$$
Substituting the value of $G$: 
	$$\mathbb{P}\left(F_t(y) > E[y] + tV[y]-1/2\right) > 1-\Phi(t) + \frac{\gamma(t^2-1)}{6\sqrt{2\pi} V[y]^3}e^{-t^2/2} - \frac{1}{V[y]^2}$$
	Here, we will make two approximations. First, the exact value of $\gamma$ is $\sum_{z \in A_t}g(y, z)(1-g(y, z))(1-2g(y, z))$. Therefore, $\gamma > -V[y]^2$, so $\gamma(t^2-1) >  -V[y]^2t^2$. Second, we will substitute the lower tail bound for $1-\Phi(t) \geq \frac{1}{\sqrt{2\pi}}\left(t^{-1} - t^{-3}\right)e^{-t^2/2} \geq \frac{1}{t\sqrt{8\pi}}e^{-t^2/2}$ for $t\geq2$.
	
	This leaves us with:
	$$\mathbb{P}\left(F_t(y) > C_t=E[y] + tV[y]-1/2\right) > \frac{1}{t\sqrt{8\pi}}e^{-t^2/2} - \frac{t^2}{6\sqrt{2\pi} V[x]}e^{-t^2/2} - \frac{1}{V[y]^2}$$
	Setting $t = O(1)$, this occurs with constant positive probability. 
\end{proof}
Finally, we can use the above lemma to relate the probability that a point fires at time $t+1$ to its expected value at time $t$. 
	\begin{lemma} 
	\label{lem:Ex-difference}
	Let $y \in I_j$, and $\hat k = |I_j \cap A_t|$.
	If there exists an $x\in I_j$ such that $\mathbb{E}F_t(x) > \mathbb{E}F_t(y) + \sqrt{6\beta(\hat k-\mathbb{E}F_t(y)) \ln k}$, then $\mathbb{P}(F_t(y) > C_t) < \frac{1}{n^3}$
	\end{lemma}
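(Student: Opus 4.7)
The plan is to combine the lower bound $C_t \geq \max_z E[z]$ from Lemma~\ref{lem:ct-bound} with a lower-tail Chernoff bound applied to the number of \emph{missing} edges from $A_t\cap I_j$ to $y$. First, the hypothesis $E[x] > E[y] + \sqrt{6\beta(\hat k - E[y])\ln k}$ together with Lemma~\ref{lem:ct-bound} gives, with high probability,
\[
\mathbb{P}(F_t(y) > C_t) \;\leq\; \mathbb{P}\bigl(F_t(y) > E[y] + \sqrt{6\beta(\hat k - E[y])\ln k}\bigr).
\]
Next, I use the separation assumption: for any $z \in A_t \setminus I_j$ we have $g(z,y) \leq e^{-2(1-o(1))\ln n} = n^{-2(1-o(1))}$, so the expected number of edges into $y$ from $A_t\setminus I_j$ is $o(n^{-1})$, and Lemma~\ref{lem:independence} lets me treat the remaining indicators $\{\mathbbm{1}_{(z,y)} : z \in A_t \cap I_j\}$ as mutually independent. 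Hence I may replace $F_t(y)$ by its $I_j$-restricted version $\sum_{z \in A_t \cap I_j}\mathbbm{1}_{(z,y)}$ at a cost of $o(n^{-3})$ in the probability.

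Writing $s = \hat k - E[y]$ and $t = \sqrt{6\beta s \ln k}$, the key trick is to pass to the complementary sum
\[
G(y) \;=\; \hat k - F_t(y) \;=\; \sum_{z \in A_t \cap I_j} \bigl(1 - \mathbbm{1}_{(z,y)}\bigr),
\]
which is a sum of $\hat k$ independent $\{0,1\}$ indicators with expectation exactly $s$. The event $F_t(y) > E[y] + t$ is identical to $G(y) < s - t$. If $t \geq s$ this is impossible since $G(y)\geq 0$, so assume $t < s$; then the multiplicative Chernoff lower tail yields
\[
\mathbb{P}\bigl(G(y) < s - t\bigr) \;\leq\; \exp\!\left(-\frac{t^2}{2s}\right) \;=\; \exp(-3\beta \ln k) \;=\; k^{-3\beta} \;=\; n^{-3},
\]
using $n = k^\beta$. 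Absorbing the $o(n^{-3})$ slack from the previous paragraph gives the claimed bound.

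I expect the main obstacle to be bookkeeping rather than any genuine mathematical difficulty. The delicate regime is when $\hat k - E[y]$ is small (so $E[y]$ is near $\hat k$): there an upper-tail Chernoff or Bennett bound on $F_t(y)$ itself, with variance proxy $V[y]^2$, does not cleanly give the $k^{-3\beta}$ rate because the linear correction term $2t/3$ in Bernstein's denominator can dominate. Switching to the complementary sum $G(y)$ sidesteps this precisely because $G$ has mean $s$, the exact scale appearing in the assumed gap, making the lower-tail multiplicative Chernoff bound tight. The only other point of care is verifying that approximating $F_t(y)$ by its $I_j$-restricted version introduces error well below $n^{-3}$, which follows immediately from the $2(1-o(1))\sigma\sqrt{\ln n}$ separation between balls.
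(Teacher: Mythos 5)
Your proposal is correct and matches the paper's argument essentially verbatim: the paper likewise passes to the complementary sum $X = \hat k - F_t(y)$ with mean $\hat k - \mathbb{E}F_t(y)$, invokes Lemma~\ref{lem:ct-bound} to lower-bound $C_t - \mathbb{E}F_t(y)$ by the assumed gap, and applies the multiplicative lower-tail Chernoff bound to get $\exp(-3\beta\ln k) = n^{-3}$. Your extra remarks on the degenerate case $t \geq s$ and on restricting to $I_j$ are harmless additions the paper leaves implicit.
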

	\begin{proof}
		
		Let $X = \hat k - F_t(y)$. By Lemma~\ref{lem-chernoff}, $\mathbb{P}(X < (1-\epsilon)\mathbb{E}X) \leq \exp\left(-\epsilon^2 \mathbb{E}X/2\right)$.  

		Thus, setting $\epsilon = \frac{C - F_t(y)}{\mathbb{E}X}$, we have 
		$$\mathbb{P}(F_t(y) > C) = \mathbb{P}(X < \mathbb{E}X - (C-\mathbb{E}F_t(y)) \leq  \mathbb{P}(X < \mathbb{E}X (1-\epsilon))\leq \exp\left(-\frac{\epsilon^2 \mathbb{E}X}{2}\right)$$
By Lemma~\ref{lem:ct-bound}, $C_t \geq \mathbb{E}F_t(x)$ for all $x$. Hence, by the assumption, $C-\mathbb{E}F_t(y) \geq \mathbb{E}F_t(x) - \mathbb{E}F_t(y) >  \sqrt{6\beta\mathbb{E}X \ln k}$. Substituting this value for $\epsilon \mathbb{E}X$,

       $$\mathbb{P}(F_t(y) > C) \leq \exp\left(-\frac{6\beta\ln k}{2}\right) = k^{-3\beta} = n^{-3}$$

	\end{proof}

	Now, we are ready to prove Lemma~\ref{lem:main_At-multidim}. This lemma will show that the radius of each ball shrinks at each step; that is $A_t$ is contained within a union of balls of radius $r_t$, where $r_t$ is a decreasing function of $t$. The main idea of the proof is to show that, regardless of the actual positions of points in $A_t \cap I$, vertices toward the center of $I$ have a small advantage over vertices toward the edge. Thus, either (1) the position of points in $A_t \cap I$ is particularly unbalanced, and $A_{t+1}$ shifts toward one side, or (2), the radius of $I$ shrinks in all directions. 
	
\begin{proof}[Proof of Lemma~\ref{lem:main_At-multidim}]
\begin{figure}[h]
    \centering
	\subfloat[\centering Case 1]{{\includegraphics[width=0.46\textwidth]{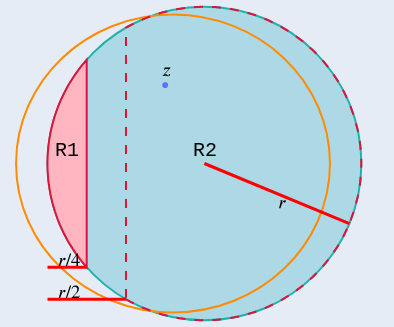} }}
	\qquad
	\subfloat[\centering Case 2]{{\includegraphics[width=0.46\textwidth]{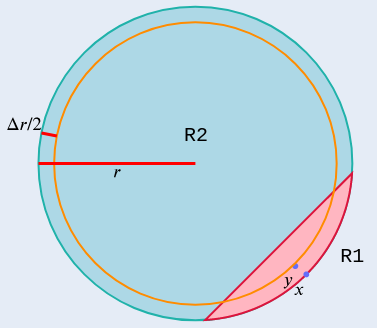} }}
	\caption{The division of the ball $I$ into two subregions. In case 1, there exists a division of $I$ into two sub regions $R_1$ and $R_2$ such that $R_2 \cap A_t < \hat k/(\ln k)^\alpha$. We bound the gradient of $E[z]$ for all $z$ in the region enclosed by the dotted line. In case 2, no such division exists. We prove that for $y$ between the outer and inner circles, $\mathbb{P}(y < 1/n^3)$. In both cases, we prove that $A_{t+1}$ falls in the orange circle with high probability.}
	\label{fig:interval_division}
\end{figure}

Fix one ball $I=B_r(p)$. Let $r$ be the radius of $I$ and $p$ be its center. 

To assist with the proof, we will define the following values. Let $\hat k = |A_t \cap I|$; we can assume that $\hat k > k^{3/4 - o(1)}$, since an interval with asymptotically fewer points will be eliminated at the next step. Define $dist(I,z) = \min_{y \in I} \lVert y-z \rVert$. Finally, for any set $S \subset [0,1]^d$, let $F_t(z; S) = \sum_{y \in A_t \cap S} \mathbbm{1}_{(y,z)}$.

We will prove that with high probability, $\{x \in [0,1]^d : F_t(x; I) \geq C_t\}$ can be covered by $I' = B_{r'}(p')$, where $r' = (1-1/(\log k)^c) r$ and $\max_{z \in I'} dist(I, z) < 5(r(I) - r(I'))$. Call this statement (*).

If statement (*) holds for each $I = I_i$, the lemma is proven. This holds by the separation assumption; if $dist(I, x) < \sigma\sqrt{\ln n}$, then for all $y \in A_t \setminus I$, $g(x, y) < 1/n^{2(1-o(1))}$.  Therefore, $F_t(x; I) = F_t(x) - o(1)$ with high probability.

To prove this statement, we will consider two cases. In case 1, we suppose that the distribution of $A_t \cap I$ is imbalanced. In particular, there exists a half space dividing $I$ into two spherical caps, with heights $r/4$ and $7r/4$, such that the larger segment contains only $\hat k/(\ln k)^\alpha$ points of $A_{t+1}$ (for an $\alpha \geq 1$). See Figure \ref{fig:interval_division}. We will show that given this imbalance, statement (*) holds. 

In case 2, no such division exists. We will show that for any point $z$ near the boundary of $I$ is disadvantaged compared to a point near the center. Thus, $I' = B_{r'}(p)$ for an $r' = (1-1/(\log k)^c) r$. See Figure \ref{fig:interval_division}.

For both cases, the argument will use a bound on the gradient of $E[z]$. With this, we will construct a point $w$ such that  $E[w] - E[z]$ is large, and use Lemma \ref{lem:Ex-difference} to argue that $\mathbb{P}(z \in A_{t+1}) < 1/n^3$.

Consider two cases:

\noindent
\textbf{Case 1}: there exists a half space dividing $I$ into two spherical caps, $R_1$ and $R_2$, with heights $r/4$ and $7r/4$, such that $|R_2 \cap A_t| \leq \hat k/(\ln k)^{\alpha}$, where $\alpha = 1+\max(2r^2/(\sigma^2\ln \ln k), 1)$. In this case, the ball is ``imbalanced" in the sense that one portion of of the ball contains the vast majority of the points. 

Without loss of generality, let $p = [r,0,0, \dots, 0]$, $R_1 = \{y \in I : y_1 \leq r/4\}$, and $R_2 = \{y \in I : y_1 > r/4\}$ (as illustrated in Figure \ref{fig:interval_division}). Let $z = [z_1, z_2, \dots, z_n]$ where $z_1 \geq 3r/8$ and $dist(I, z) = O(\min\{r, \sigma \hat{k}^{-1/5}\})$. 

Then we can bound the derivative with respect to the first coordinate:

\begin{align}
\frac{\partial}{\partial z_1}\mathbb{E}F_t(z; I)& = \sum_{y \in A_t\cap I} \frac{\partial}{\partial z_1} g(y,z)=\sum_{y \in A_t\cap I} -\frac{z_1 - y_1}{\sigma^2}g(y,z)\\
&= \sum_{y \in R_1 \cap A_t} -\frac{z_1 - y_1}{\sigma^2}g(y,z) + \sum_{y \in R_2 \cap A_t} -\frac{z_1 - y_1}{\sigma^2}g(y,z)\label{eq-derivative}
\end{align}
The partial derivative $ \frac{\partial}{\partial z_1}g(y,z)$ is minimized at $z_1 - y_1 = \sigma$ and maximized at $z_1 - y_1 = -\sigma$. The lower bound on the derivative depends on $r$ as follows:
\begin{itemize} 
\item If $2r \geq \sigma$:

$\min_{y \in R_2}  \frac{z_1 - y_1}{\sigma^2}g(y,z) > -\frac{1}{\sigma}e^{-1/2}$, and

$\min_{y \in R_1} \frac{z_1 - y_1}{\sigma^2}g(y,z)>\frac{r}{8\sigma^2}\exp\left(\frac{-(2r + \min\{r, \sigma \hat k^{-1/5}\})^2}{2\sigma^2}\right) = \frac{r}{8\sigma^2}\exp\left(\frac{-2r^2 - o(\sigma^2)}{\sigma^2}\right)$

\item If $2r< \sigma$, 

$\min_{y \in R_2}  \frac{z_1 - y_1}{\sigma^2}g(y,z) > -\frac{2r}{\sigma^2}\exp\left(-2r^2/\sigma^2\right)$, and

$\min_{y \in R_1} \frac{z_1 - y_1}{\sigma^2}g(y,z)>\frac{r}{8\sigma^2}\exp\left(\frac{-(2r + O(\min\{r, \sigma \hat k^{-1/5}\})^2}{2\sigma^2}\right) = \frac{r}{8\sigma^2}\exp\left(\frac{-O(1)r^2}{2\sigma^2}\right)$
\end{itemize}
Returning to Equation \ref{eq-derivative}, 
\begin{equation}
\frac{\partial}{\partial z_1}\mathbb{E}F_t(z; I)\leq -|A_t \cap R_1|\min_{y \in R_1}  \frac{z_1 - y_1}{\sigma^2}g(y,z)  + |A_t \cap R_2|\max_{y \in R_2}  \frac{y_1-z_1}{\sigma^2}g(y,z) 
\end{equation}
By assumption, $|A_t\cap R_2|\leq \hat k/(\ln k)^{\alpha}$. Replacing this:
\begin{itemize} 
\item If $2r \geq \sigma$:
$$\frac{\partial}{\partial z_1}\mathbb{E}F_t(z; I)\leq -\hat k(1-o(1))\frac{r}{8\sigma^2}\exp\left(\frac{-2r^2}{\sigma^2}-o(1)\right)+ \frac{\hat k}{(\ln k)^{\alpha}}\frac{1}{\sigma}e^{-1/2}$$ 
By the definition of $\alpha$, $(\ln k)^{-\alpha} = e^{-\alpha \ln \ln k}\leq \frac{1}{\ln k}e^{-2r^2/\sigma^2}$. Hence,
$$\frac{\partial}{\partial z_1}E[z]\leq -\frac{\hat k}{\sigma} \left[(1-o(1))\frac{r}{8\sigma}\exp\left(\frac{-2r^2}{\sigma^2}\right) -\frac{1}{\ln k}\exp\left(\frac{-2r^2}{\sigma^2}\right)e^{-1/2}\right]$$

$$\frac{\partial}{\partial z_1}\mathbb{E}F_t(z; I)\leq -\frac{\hat kr}{8\sigma^2}\exp\left(\frac{-2r^2}{\sigma^2}\right) (1-o(1))$$

\item If $2r< \sigma$, 

$$\frac{\partial}{\partial z_1}\mathbb{E}F_t(z; I)\leq - \frac{r}{8\sigma^2}e^{-O(1)r^2/\sigma^2} \hat k + \frac{2r}{\sigma^2}\frac{\hat k}{\ln k} = -\Theta(1)\frac{\hat kr}{\sigma^2}$$
\end{itemize}

Let $z' = [z_1', z_2', \dots, z_n']$ where $z_1' \geq r/2$ and $dist(I, z') < \min\{r/8, \sigma \hat k^{-1/5}\}$. Consider the point $w = z' - [\min\{r/8, \sigma \hat k^{-1/5}\}, 0, 0, \dots, 0]$. By definition, the derivative bounds above apply for all points on the line between $w$ and $z'$. This gives us a lower bound on $E[w] - E[z']$. While $w\notin V$ almost surely, by Lemma \ref{lem-n-uniform-random} there exists a point $w' \in V$ within a radius of $O((\log n/n)^{1/d})$ of $w$. Applying \ref{lem-Ex}, $E[w'] - E[z'] > E[w] - E[z'] - o(1)$. Then, we will apply Lemma \ref{lem:Ex-difference} to show that $\mathbb{P}(z' \in A_{t+1}) < 1/n^3$.

The condition of Lemma~\ref{lem:Ex-difference} holds if 
\begin{equation}
\label{eq:cond-ex-difference}
E[w] - E[z'] \geq \sqrt{6\beta(\hat{k} - E[z])\ln k}
\end{equation}

\begin{itemize}

\item If $2r \geq \sigma$:
$$E[w] \geq E[z'] +\sigma \hat k^{-1/5}\cdot \frac{\hat kr}{8\sigma^2}\exp\left(\frac{-2r^2}{\sigma^2}\right) (1-o(1))$$
Since $r = O(\sigma \sqrt{\ln \ln k})$, $\exp\left(2r^2/\sigma^2\right) = \tilde{O}(1)$. Thus, $E[w] - E[z]= \tilde{\Omega}(\hat k^{4/5})$. Clearly this exceeds $\sqrt{6\beta\hat k\ln k}$, so by Lemma \ref{lem:Ex-difference}, $\mathbb{P}(z' \in A_{t+1}) < 1/n^3$.

\item If $20 \sigma \hat k^{-1/5} < 2r < \sigma$:

For the same reasons as above, we can obtain a similar bound:

$$E[w] \geq E[z'] + \sigma \hat k^{-1/5}\cdot \hat k\frac{r}{6\sigma^2}e^{-2r^2/\sigma^2}(1-o(1)) = E[z] + \hat k^{4/5 - o(1)} \frac{r}{\sigma}$$
Since $r= \Omega(\sigma \hat k^{-1/5})$, this exceeds $\sqrt{6\beta\hat k\ln k}$, so by Lemma \ref{lem:Ex-difference}, $\mathbb{P}(z' \in A_{t+1}) < 1/n^3$.

\item If $2r \leq 20 \sigma \hat k^{-1/5}$:
$$E[w] \geq E[z'] + \frac{r}{8}\cdot \hat k\frac{r}{8\sigma^2}e^{-2r^2/\sigma^2}(1-o(1)) = E[z] + \Theta(1)\hat k\frac{r^2}{\sigma^2}$$

In this case, we can bound $\hat k-E[z]$; 
$$\hat k - E[z] \leq \hat k(1 - e^{-2r^2/\sigma^2})\leq \hat k \frac{2r^2}{\sigma^2}$$

Therefore, the condition can be bounded: $\sqrt{6\beta(\hat k - E[z])\ln k}\leq \frac{r}{\sigma} \sqrt{12\beta \hat k\ln k}$. 

There exists a constant $C$ such that for $10\sigma \hat k^{-1/5} > r > C\sigma \sqrt{\ln k/\hat k}$, 
$E[w] - E[z] = \Theta(1)\hat k \frac{r^2}{\sigma^2} > \frac{r}{\sigma} \sqrt{12\beta\hat k\ln k}$. 
By Lemma \ref{lem:Ex-difference}, $\mathbb{P}(z' \in A_{t+1}) < 1/n^3$.
\end{itemize}

Finally, we will argue that for any $z$ with $dist(I, z) > r/20$,  $\mathbb{P}(z \in A_{t+1}) < 1/n^3$. Let $u$ be the unit vector parallel to $z-p$:
\begin{align*}
\nabla_u E[z]&=\sum_{y \in A_t} \nabla_u g(y,z) =\sum_{y \in A_t} \left(u \cdot \frac{y-z}{\sigma^2}\right)g(y,z)\\
&\geq \frac{dist(I, z)}{\sigma^2}\sum_{y \in A_t}g(y,z) \\
&= \frac{dist(I, z)}{\sigma^2}E[z]\\
\end{align*}
Let $w$ be a point along the line $z-p$, with $dist(I, w) = dist(I,z)/2$. Again, while $w\notin V$ almost surely, by Lemma \ref{lem-n-uniform-random} there exists a point $w' \in V$ within a radius of $O((\log n/n)^{1/d})$ of $w$. Applying \ref{lem-Ex}, $E[w'] - E[z'] > E[w] - E[z'] - o(1)$. 
Dividing this again into two cases:
\begin{itemize}
\item If $r\geq 2\sigma$:

There exists a point $y$ in $I$ with $E[y] =\Omega( \hat k / (\ln \ln k)^{d/2})$.  This is due to the pigeonhole principle; the volume of $I$ is $\Theta(r^d) = O((\ln \ln k)^{d/2}/\hat k)$. Therefore, there exists a smaller ball of radius $\sigma$ in $I$ with $\hat k/(\ln \ln k)^{d/2}$ points. For $y$ in this smaller ball, $E[y] = \Omega( \hat k / (\ln \ln k)^{d/2})$.

If $E[z] = \tilde{\Omega}(\hat k)$, then $E[w] - E[z] > dist(I,z)^2/(2\sigma)^2 E[z] = \tilde{\Omega}(\hat k)$, and by Lemma \ref{lem:Ex-difference}, $\mathbb{P}(z \in A_{t+1}) < 1/n^3$. Otherwise, $E[y] - E[z] = \tilde{\Omega}(\hat k)$, and again by Lemma \ref{lem:Ex-difference}, $\mathbb{P}(z \in A_{t+1}) < 1/n^3$.

\item If $r<2 \sigma$:

There exists a point $y$ in $R_1$ with $E[y] \geq e^{-r^2/8\sigma^2}\hat k \geq \hat k(1-r^2/8\sigma^2)$. 

If $E[z] = \hat k(1-\gamma r^2/\sigma^2)$,  the bound for Lemma \ref{lem:Ex-difference} is: $$\sqrt{6\beta(\hat k -E[z])\ln k} = \frac{r}{\sigma} \sqrt{6\beta \gamma \ln k} $$

For any $r = \Omega(\sigma\sqrt{\ln k/\hat k})$, $E[w] - E[z] \geq \Theta(r^2/\sigma^2\hat k) = \Omega(\ln k)$. There exists a constant $C$ such that for any $r>C\sigma\sqrt{\ln k/\hat k}$, this exceeds the bound of Lemma \ref{lem:Ex-difference}, and $\mathbb{P}(z \in A_{t+1}) < 1/n^3$.
\end{itemize}

In conclusion, we have determined that the set of points $z\in I$ such that $\mathbb{P}(z \in A_{t+1})>1/n^3$ are contained within a region $R = \{z : z_1 \leq r/2, dist(I, z) < r/20\}$. The radius of $R$ is $r/20$ plus the width of $\{z\in I : z_1 \leq r/2\}$. Using a geometric argument, this set has width $\sqrt{R^2 - (R/2)^2} = R\sqrt{3}/2$. This region can be enclosed by a ball $I'$ defined as follows (illustrated as an orange circle in Figure \ref{fig:interval_division}):

Let $I' =B_{19r/20}(p')$ for $p' = [3r/4, 0, 0, \dots, 0]$ and $r(I') = \frac{19}{20}r(I)$. It is simple to check that $B_{19r/20}(p')$ contains $R$; 
$$\max_{z \in R} \lVert p-z \rVert = r\sqrt{(\sqrt{3}/2 + 1/20)^2 + 1/4^2} < 19r/20$$

Additionally, $\max_{z \in B_{19r/20}(p')} dist(z, I) < \max_{z \in B_{r}(p')} dist(z, I) < r/4$. Therefore, $d(I, I') < r/4 < 5(r(I) - r(I'))$.

\noindent
\textbf{Case 2}: In this section, we assume that no such imbalanced partition of $I$ exists. For all $x \in \partial I$, denoting $R_2 = \{z\in I : (x-z) \cdot \frac{x-p}{\lVert x-p \rVert} > r/4\}$, $|R_2 \cap A_t|\geq \hat k/(\ln k)^{\alpha}$, where $\alpha = 1+\max\{2r^2/(\sigma^2\ln \ln k), 1\}$. We will show that for any $z$ within $\Delta r$ of the boundary of $I$, $\mathbb{P}(z \in A_{t+1} < 1/n^3)$.

Fix $x$, and assume without loss of generality that $p = [r,0,0,\dots, 0]$ and $x = [0,0,\dots, 0]$

Let $z = [z_1,0,0,\dots,0]$ where $-\sigma\log n < z_1 < \Delta r = O(r/\ln k)$. Then, $\max_{y \in R_1} \frac{z_1 - y_1}{\sigma^2}g(y,z) = \frac{z_1}{\sigma^2}\exp\left(\frac{-z_1^2}{2\sigma^2}\right)< \frac{\Delta r}{\sigma^2}\exp\left(\frac{-(\Delta r)^2}{2\sigma^2}\right)$ (Note that by construction $\Delta r<\sigma$). Also, separately taking the minima of $y_1 - z_1$ and $g(y,z)$, $\min_{y \in R_2} \frac{y_1 - z_1}{\sigma^2}g(y,z) > \frac{r/4-z_1}{\sigma^2}\exp\left(\frac{-2r^2}{\sigma^2}\right)$. Returning to Equation \ref{eq-derivative}:

\begin{equation}
\frac{\partial}{\partial z_1}E[z]\geq -|A_t \cap R_1| \frac{\Delta r}{\sigma^2}\exp\left(\frac{-(\Delta r)^2}{2\sigma^2}\right) + |A_t \cap R_2|\frac{r/4-z_1}{\sigma^2}\exp\left(\frac{-2r^2}{\sigma^2}\right)
\end{equation}
By assumption, $|A_t \cap R_2| \geq \hat k/(\ln k)^{\alpha}$. Replacing this:
$$\frac{\partial}{\partial z_1}E[z]\geq -\hat k\left[1-\frac{1}{(\ln k)^{\alpha}}\right] \frac{\Delta r}{\sigma^2}\exp\left(\frac{-(\Delta r)^2}{2\sigma^2}\right) + \frac{\hat k}{(\ln k)^{\alpha}} \frac{r/4-\Delta r}{\sigma^2}\exp\left(\frac{-2r^2}{\sigma^2}\right)$$

Define $\Delta r = r/(\ln k)^{2\alpha}$. Again, note that $(\ln k)^{-\alpha} = e^{-\alpha \ln \ln k}\leq \frac{1}{\ln k}e^{-2r^2/\sigma^2}$. So, $e^{-2r^2/\sigma^2} \geq (\ln k)^{1-\alpha}$
$$\frac{\partial}{\partial z_1}E[z]\geq -\hat k\left[1-\frac{1}{(\ln k)^{\alpha}}\right] \frac{r}{\sigma^2 (\ln k)^{2\alpha}}\exp\left(\frac{-(\Delta r)^2}{2\sigma^2}\right) + \frac{\hat k}{(\ln k)^{2\alpha-1}} \frac{r}{5\sigma^2}$$
$$\frac{\partial}{\partial z_1}E[z]\geq \frac{\hat kr}{\sigma^2 (\ln k)^{2\alpha}}\left[1-o(1) + \ln k\right]$$

Suppose $z = [z_1, 0, 0, \dots, 0]$ where $z_1 < \Delta r/2$. Let $w = z_1 + [\Delta r/2, 0, 0, \dots, 0]$. Using the lower bound on the derivative, 

$$E[w] \geq E[z] + \frac{\Delta r}{2}\cdot\frac{\hat kr}{\sigma^2 (\ln k)^{2\alpha}}\left[1-o(1) + \ln k\right] \geq E[z] + \frac{1}{2}\frac{\hat kr^2}{\sigma^2}\ln k$$

For $r= \Omega(\sigma \hat k^{-1/4})$, this exceeds $\sqrt{6\beta\hat k\ln k}$, so by Lemma \ref{lem:Ex-difference}, $\mathbb{P}(z \in A_{t+1}) < 1/n^3$.

For $r = o(\sigma \hat k^{-1/4})$, we can bound $\hat k-E[z]$; 
$$\hat k - E[z] \leq \hat k(1 - e^{-2r^2/\sigma^2})\leq \hat k \frac{2r^2}{\sigma^2}$$

Therefore, the condition can be bounded: $\sqrt{6\beta(\hat k - E[z])\ln k}\leq \frac{r}{\sigma} \sqrt{12\beta \hat k\ln k}$. 

Then, for $\sigma \hat k^{-1/4} \ln k > r > \sigma \sqrt{\ln k/\hat k}$, 
$E[w] - E[z] = \frac{1}{2}\frac{\hat kr^2}{\sigma^2}\ln k > \frac{r}{\sigma} \sqrt{12\beta \hat k\ln k}$. 
By Lemma \ref{lem:Ex-difference}, $\mathbb{P}(z' \in A_{t+1}) < 1/n^3$.

In summary, there are two cases: in \textbf{case 1}, there exists a partition of $I$ such that the vast majority of $A_t$ is located in $R_1$. In this case, we have shown that $\{z : \mathbb{P}(z \in A_{t+1}) > 1/n^3\} \subset \{z : z_1 \leq r/2\}$. A symmetric argument showed that for any $z$ such that the distance from $z$ to $I$ is at most $r/20$, $\mathbb{P}(z \in A_{t+1}) < 1/n^3$ Therefore, $A_{t+1}$ is contained within a ball $I'= B_{19r/20}(p')$, where $d(I, I')< r/4$

In \textbf{case 2}, for all $x \in \partial I$, $|A_t \cap R_2|$ is sufficiently large. In this case, We have shown that for all $z = x + \lambda \frac{p-x}{\lVert p-x \rVert}$ where $\lambda < r/\polylog(k)$, $\mathbb{P}(z \in A_{t+1}) < 1/n^3$. This applies for all $x\in \delta I$. Therefore, $I' \subset B_{r(1 - 1/(\ln k)^c)}(p)$.

By the assumption that each ball is sufficiently separated, we can conclude that, with high probability, $A_{t+1} \subset I_1' \cup I_2' \cup \dots \cup I_i'$, where the radius of $I_j'$ is smaller than the radius of $I_j$ by at least a factor of $1/\polylog(k)$.

\end{proof}

\noindent Now, we are ready to prove Theorem~\ref{thm-main-at-multidim}.
\begin{proof}[Proof of Theorem~\ref{thm-main-at-multidim}] 
    By Theorem \ref{thm:A1}, the conditions of Lemma~\ref{lem:main_At-multidim} hold at step 1. Additionally, by the condition that each ball does not shift by more than $5(r(I) - r(I'))$ at each step, the separation condition holds inductively for any $t = \polylog(k)$. The maximum distance moved by a single ball by time $t$ is $5 (r(I^{(0)}_j) - r(I^{(t)}_j)) = O(\sigma \sqrt{\ln \ln k})$, which maintains the separation of $2(1-o(1))\sigma \sqrt{\ln n})$. Thus, we can apply the Lemma inductively. 
    
	By Lemma~\ref{lem:main_At-multidim}, the radius of $I$ is reduced by a factor of $1-\frac{1}{\polylog(k)}$ in a single step; thus, to reach $O(\sigma \hat k^{-1/2}\sqrt{\ln k})$, the number of steps required is $\polylog(k)$. 
 
	This shows that in $\polylog(k)$ time, the radius of each sufficiently separated ball will be reduced to at most $\hat k^{-1/2}\sqrt{\ln k}$. Recall that there are $k^{1/4+o(1)}$ separated balls; a similar method will allow us to eliminate balls that are $\sigma \hat k^{-1/2}\sqrt{\ln k}$ in size.

	If the number of balls is greater than 1, $|I_1\cap A_t|$ can fall anywhere in the range $M \pm \sqrt{M}$ with constant probability, where $M = \mathbb{E}|I_1\cap A_t|$. By the pigeonhole principle, at least one ball receives $k^{3/4 - o(1)}$ points. Since the size of the balls are at most $\sigma \hat k^{-1/2}\sqrt{\ln k} < \sigma \hat k^{-3/8}\sqrt{\ln k}$, $C_t\geq \left(k^{3/4 - o(1)}\right)$. Therefore, for each $j$, if $I_j$ is not eliminated, there exists an $x \in I_j$ such that $\mathbb{E}F_t(x) \geq k^{3/4 - o(1)}$. Consider two alternative scenarios, which can each occur with constant probability.  
	$$
	\begin{cases}
	(1)  & \max_{x \in I_1}\mathbb{E}F_t(x) = X - \Theta(\sqrt{X})\\
	(2)  & \max_{x \in I_1}\mathbb{E}F_t(x) = X + \Theta(\sqrt{X})
	\end{cases}
	$$
	Let $y = \argmax_{y \in I_2}\mathbb{E}F_t(y)$. So, it is clear that in either scenario (1) or (2), the inputs to $x$ and $y$ differ by the number of points added to $I_1$. 
	$$\left|\max_{x \in I_1}\mathbb{E}F_t(x) -\max_{y \in I_2}\mathbb{E}F_t(y)\right| = k^{3/8 - o(1)}$$
	
	In scenario 2, $x$ receives an extra input of $\Theta\left(\sqrt{X}\right)$. The increased input in this scenario could affect $C_t$; however, either $\mathbb{E}F_t(x)$ becomes closer to $C_t$ by $k^{3/8 - o(1)}$, or $\mathbb{E}F_t(y)$ becomes further from $C_t$ by the same amount.

	By Lemma~\ref{lem:pt-lowerbound}, between the two scenarios, either $p_{t+1}(z)$ increases by a constant factor for all $z \in I_1$, or $p_{t+1}(w)$ decreases by a constant factor for all $w \in I_2$. Again, $I_1$ and $I_2$ either have at least $k^{3/4 - o(1)}$ points, or they are eliminated. So, this implies that  $|\mathbb{E}F_{t+1}(x)-\mathbb{E}F_{t+1}(y)|$ varies by $O(1)k^{3/4 - o(1)}$ between the two scenarios. 
	
	This is a significant variation; as in Lemma~\ref{lem:ct-bound}, for any $x \in [n]$, $C_t \geq \mathbb{E}F_{t+1}(x)$. So, in the case where $\mathbb{E}F_{t+1}(y)< \mathbb{E}F_{t+1}(x)$:
	
	$$\mathbb{E}F_{t+1}(y)< \mathbb{E}F_{t+1}(x) -\Omega\left(k^{3/4 - o(1)}\right)$$
	
	By the Chernoff bound, the probability that $F_{t+1}(y)$ will exceed $C_{t+1}$ is exponentially small. A similar argument applies if $\mathbb{E}F_{t+1}(y) > \mathbb{E}F_{t+1}(x)$. Therefore, since there is a constant probability that the two balls will deviate from each other, either $I_1$ or $I_2$ will be eliminated in a constant number of steps. 
	
	The same argument applies to any pair of balls $(I_i, I_j)$. Therefore, the number of balls reduces by a constant factor within a constant number of steps. This leads to convergence to a single ball within $O(\ln k)$ steps. 

    At this point, $\hat k = k$, so applying Lemma \ref{lem:main_At-multidim} again, we can conclude that $A_t$ converges to a single ball of size $O(\sigma k^{-1/2}\sqrt{\ln k})$ in $O((\log k)^c)$ steps. 
\end{proof}

\noindent Finally, we prove that the set $A_t$, with high probability, remains within a small subset for all $t \geq t^*$.
\begin{proof} [Proof of Theorem \ref{thm-all-t-multidim}]
Let $A\subset V$ with $|A| = k$, and let $I$ be a ball surrounding $k - k^{2/3}$ points of $A$. Assume that $r = r(I) = \sigma k^{-1/3+\epsilon}$ and $I=B_r(p)$. 

We consider 2 cases, identically to the proof of Lemma \ref{lem:main_At-multidim}:

\noindent
\textbf{Case 1:} There exists a half space dividing $I$ into two spherical caps, $R_1$ and $R_2$, with heights $r/4$ and $7r/4$, such that $|R_2 \cap A_t| \leq k/(\ln k)^{2}$. In this case, the ball is ``imbalanced" in the sense that one portion of of the ball contains the vast majority of the points. 

Without loss of generality, let $p = [r,0,0, \dots, 0]$, $R_1 = \{y \in I : y_1 \leq r/4\}$, and $R_2 = \{y \in I : y_1 > r/4\}$ (as illustrated in Figure \ref{fig:interval_division}). We define $E[z] = F(z; A) = \sum_{y \in A} g(y,z)$.

Then we can bound the derivative with respect to the first coordinate:
\begin{align}
\frac{\partial}{\partial z_1}E[z]& = \sum_{y \in A\cap I} \frac{\partial}{\partial z_1} g(y,z)=\sum_{y \in A\cap I} -\frac{z_1 - y_1}{\sigma^2}g(y,z) + \sum_{y \in A\setminus I} -\frac{z_1 - y_1}{\sigma^2}g(y,z) \\
&= \sum_{y \in R_1 \cap A} -\frac{z_1 - y_1}{\sigma^2}g(y,z) + \sum_{y \in R_2 \cap A} -\frac{z_1 - y_1}{\sigma^2}g(y,z) + \sum_{y \in A \setminus I} -\frac{z_1 - y_1}{\sigma^2}g(y,z)\label{eq-derivative}
\end{align}
Let $z = [z_1, z_2, \dots, z_n]$ where $z_1 \geq 3r/8$ and $dist(I, z) = O(r)$. This implies: 

$\min_{y \in R_2}  \frac{z_1 - y_1}{\sigma^2}g(y,z) > -\frac{2r}{\sigma^2}\exp\left(-2r^2/\sigma^2\right)$

$\min_{y \in R_1} \frac{z_1 - y_1}{\sigma^2}g(y,z)> \frac{r}{8\sigma^2}\exp\left(\frac{-O(1)r^2}{2\sigma^2}\right)$, and

$\min_{y \in [0,1]^d\setminus I} \frac{z_1 - y_1}{\sigma^2}g(y,z)> -\frac{1}{\sigma}e^{-1/2}$
\\Returning to Equation \ref{eq-derivative}:
\begin{equation}
\frac{\partial}{\partial z_1}E[z]\leq -|A_t \cap R_1|\min_{y \in R_1}  \frac{z_1 - y_1}{\sigma^2}g(y,z)  - |A_t \cap R_2|\min_{y \in R_2}  \frac{z_1-y_1}{\sigma^2}g(y,z) - |A_t \setminus I| \min_{y \in [0,1]^d}  \frac{z_1-y_1}{\sigma^2}g(y,z) 
\end{equation}
By assumption, $|A_t\cap R_2|\leq k/(\ln k)^{2}$ and $|A_t \setminus I|=k^{2/3}$. Replacing this:

$$\frac{\partial}{\partial z_1}E[z]\leq - \frac{r}{8\sigma^2}e^{-O(1)r^2/\sigma^2} k+ \frac{2r}{\sigma^2}\frac{k}{\ln k^2} + \frac{\Theta(1) k^{2/3}}{\sigma} = -\Theta(1)\frac{kr}{\sigma^2}$$

Let $z' = [z_1', z_2', \dots, z_n']$ where $z_1' \geq r/2$ and $dist(I, z') < r/8$. Consider the point $w = z' - r/8$. By definition, the derivative bounds above apply for all points on the line between $w$ and $z'$. This gives us a lower bound on $E[w] - E[z']$. While $w\notin V$ almost surely, by Lemma \ref{lem-n-uniform-random} there exists a point $w' \in V$ within a radius of $O((\log n/n)^{1/d})$ of $w$. Applying \ref{lem-Ex}, $E[w'] - E[z'] > E[w] - E[z'] - o(1)$. 

We will prove an analogous result to Lemma \ref{lem:ct-bound} for sets $A$ contained mostly within a ball of radius $r$.

\begin{lemma} 
		\label{lem:ct-bound-small}
		Let $I$ be a ball of radius $r=\sigma k^{-1/3 + \epsilon}$, for some constant $\epsilon >0$, surrounding $k-k^{2/3}$ points of $A$. Let $C$ be the threshold when the $k$-cap function is applied to $A$. With high probability, for all such sets $A \subset V$ with $|A| = k$, $C \geq \max_x E[x]$.

	\end{lemma}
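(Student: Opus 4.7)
The plan is to mirror the proof of Lemma~\ref{lem:ct-bound}, extended by (i) a variance estimate exploiting the concentration of $A$ in $I$, and (ii) a union bound over all admissible sets $A$. Fix an admissible $A$ and let $x^\star = \argmax_x E[x]$. Because the $k - k^{2/3}$ points of $A$ in $I$ dominate the $k^{2/3}$ outliers in the computation of $E[\cdot]$, a direct comparison with $E[\text{center of } I] \ge k - O(k^{2/3})$ forces $x^\star$ to lie within distance $O(\sigma)$ of $I$. By Lemma~\ref{lem-Ex}, every $y \in V$ with $\|y - x^\star\| \le \sigma k^{-1}(\log n)^2$ satisfies $E[y] \ge E[x^\star] - \sqrt{d/e}\,(\log n)^2$, and by Lemma~\ref{lem-n-uniform-random} the number $N$ of such vertices is $\tilde{\Omega}(k(\log n)^2)$, comfortably more than $k \log n$.

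For each such $y$, I would verify that the normal-approximation step used in Lemma~\ref{lem:ct-bound} still applies. Since $\|y - z\| = O(\sigma)$ for every $z \in A \cap I$ and $1 - g(y,z) = \Omega(\|y-z\|^2/\sigma^2)$, summing over the $k - k^{2/3}$ concentrated points of $A$ gives $V[y]^2 = \Omega(k^{1/3+2\epsilon})$, which is polynomially larger than $(\log n)^2$. The Edgeworth-type bound from~\cite{volkova1996refinement} used in Lemma~\ref{lem:ct-bound} then yields $\mathbb{P}\bigl(F(y) > E[y] + \sqrt{d/e}(\log n)^2\bigr) \ge c$ for an absolute constant $c > 0$, and hence $\mathbb{P}(F(y) > E[x^\star]) \ge c$. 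Across the $N$ choices of $y$, these events involve the disjoint edge sets $\{(z,y) : z \in A\}$ and so are independent conditional on $A$.

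A Chernoff bound then says fewer than $k$ of them succeed with probability at most $e^{-\Omega(N)} = e^{-\Omega(k(\log n)^2)}$, easily beating the number $\binom{n}{k} \le e^{k \log n}$ of admissible sets, so a union bound delivers the uniform statement: with high probability over the graph, every admissible $A$ has at least $k$ vertices with $F(y) > \max_x E[x]$, and so $C \ge \max_x E[x]$. The main obstacle I anticipate is the variance bound: had $r$ been much smaller than $\sigma/\sqrt{k}$, the $g(y,z)$ values would become essentially deterministic and $V[y]$ could drop below $\log n$, breaking the CLT step; the specific choice $r = \sigma k^{-1/3+\epsilon}$ is calibrated so that $V[y]^2$ sits polynomially above $(\log n)^2$, allowing the normal-approximation route to go through cleanly.
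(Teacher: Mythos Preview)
Your proposal has a genuine gap in the variance lower bound. You claim $V[y]^2 = \Omega(k^{1/3+2\epsilon})$ by summing $1 - g(y,z) = \Omega(\|y-z\|^2/\sigma^2)$ over $z \in A \cap I$. But the hypothesis only says the $k - k^{2/3}$ points lie \emph{somewhere} in a ball of radius $r = \sigma k^{-1/3+\epsilon}$; nothing prevents them from being concentrated in a much smaller sub-ball (indeed, in the intended application they already sit in a ball of radius $O(\sigma\sqrt{\ln k/k}) \ll r$). If all $k-k^{2/3}$ of them are within, say, $\sigma k^{-1/2}$ of $x^\star$, and the $k^{2/3}$ outliers are far enough that $g(y,z)\approx 0$, then $V[y]^2 = O(1)$, far below $(\log n)^4$. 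In that regime your CLT step collapses: you are asking $F(y)$ to cross a gap of order $(\log n)^2$ with a standard deviation of $O(1)$, so $\mathbb{P}(F(y) > E[x^\star])$ is exponentially small in $(\log n)^4$, not constant, and the Chernoff/union-bound step never gets off the ground.

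The paper sidesteps this by replacing your invocation of the crude Lemma~\ref{lem-Ex} with a sharper gradient bound that \emph{itself} exploits the concentration of $A$: for $x$ near $I$, each $z \in A \cap I$ contributes at most $|x_i - z_i|/\sigma^2 \le 2r/\sigma^2$ (rather than $e^{-1/2}/\sigma$) to $|\partial_{x_i} E[x]|$, yielding $|\nabla E| = O(kr/\sigma^2 + k^{2/3}/\sigma) = O(k^{2/3+\epsilon}/\sigma)$. Over a neighborhood of radius $\sigma k^{-1+\epsilon}$ this gives $E[y] = E[x^\star] - o(1)$, and that neighborhood already contains $\tilde{\Omega}(k^{1+d\epsilon})$ vertices, polynomially more than $k$. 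With an $o(1)$ gap the normal approximation delivers $\mathbb{P}(F(y) \ge E[x^\star]) = \Theta(1)$ whenever $V[y]$ is bounded away from zero, and the small-variance case is disposed of separately (if $V[y]=O(1)$ then $E[y]=k-O(1)$, forcing $C=k$). The polynomially-many neighbors also make the union bound over $\binom{n}{k}$ sets immediate. Your closing paragraph correctly senses that the variance is the pressure point, but the remedy is not a better variance estimate---it is the tighter gradient bound, which shrinks the gap you must cross to $o(1)$ so that the size of $V[y]$ no longer matters.
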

\begin{proof}[Proof of Lemma~\ref{lem:ct-bound}] 

 The derivative of $E[x]$ is, for any dimension $i$:
 $$\frac{\partial}{\partial x_i} E[x]= \sum_{z \in A_t}\frac{\partial}{\partial x_i}g(x, z) = \sum_{z \in A_t} -\frac{x_i-z_i}{\sigma^2} \exp\left(\frac{-\lVert  x-z \rVert^2}{2\sigma^2}\right)$$
	Let $\hat{x} = \frac{x-z}{\sigma}$. The maximum of $\left|\hat{x} \exp\left(\frac{-\hat{x}^2}{2}\right)\right|$ occurs at $e^{-1/2}$. For $y \in I$, this is maximized at $\hat x = \frac{2r}{\sigma}$. Thus, we have 
$$ \left|\frac{\partial}{\partial x_i}  E[x]\right| \leq \sum_{z \in A_t}\frac{|x_i-z_i|}{\sigma^2} \exp\left(\frac{-\lVert x-z\rVert^2}{2\sigma^2}\right)<\frac{e^{-1/2}k^{2/3}}{\sigma}  + k \frac{2r}{\sigma^2} = \frac{2k^{2/3+\epsilon}}{\sigma}(1+o(1))$$
Therefore, the directional derivative, as in Lemma \ref{lem-Ex}, is at most this value, times a factor of $\sqrt{d}$. 
For any $y \in B_{\sigma k^{-1+\epsilon}}(x)$, the difference between $E[y]$ and $E[x]$ can be bounded:

$$E[y] = E[x] - o(1)$$

By Lemma~\ref{lem-n-uniform-random}, for any point $x$, there are $\Omega((n/\log n) \cdot \sigma^d k^{-d+d\epsilon})$ points in a radius of $\sigma k^{-1 + \epsilon}$ of $x$. By the assumption that $n\geq k^{2+d}$, this is $\tilde{\Omega}(k^{1+d\epsilon})$.
 
	Therefore, if $C = E[x]$, then there are $\tilde{\Omega}(k^{1+d\epsilon})$ points where $E[y]  > C_t -O(1)$. Since each edge is chosen independently, Chernoff type bounds apply; if $\mathbb{P}\left(F_t(y) > E[y] + O(1)\right) = \Theta(1)$, then with high probability there are $k$ points that exceed $C_t$.

	Using the loose bound given in \cite{volkova1996refinement}, we can bound $\mathbb{P}(F_t(x) > C_t)$ using the CDF of the normal distribution. For any sum of independent indicators $S$ with mean $\mu$ and variance $\sigma$, the CDF can be approximated as follows:
	
	$$\sup_m \left|\mathbb{P}(S \leq m)  - G\left(\frac{m + 1/2 - \mu}{\sigma}\right)\right| \leq \frac{\sigma + 3}{4 \sigma^3}<\frac{1}{\sigma^2}$$
	Where $G(x) = \Phi(x) - \frac{\gamma}{6 \sigma^3}(x^2-1)\frac{e^{-x^2/2}}{\sqrt{2\pi}}$, and $\gamma = \mathbb{E}[(S-\mu)^3]$ is the skewness. This holds for any $\sigma \geq 10$. 

    We can assume that $V[y] > 10$; otherwise, $E[y] = \sum_{y \in A_t} = k(1-o(1/k))$, so we can assume that $C = k$. 
	
	Fix $x \in [n]$. From the above equation, we find that for any $t>0$:
	
	$$\mathbb{P}\left(F_t(x) > E[x] + tV[x]-1/2\right) > 1-\left[G(t) + \frac{1}{V[x]^2}\right]$$
	
	Substituting the value of $G$: 
	$$\mathbb{P}\left(F_t(x) > E[x] + tV[x]-1/2\right) > 1-\Phi(t) + \frac{\gamma(t^2-1)}{6\sqrt{2\pi} V[x]^3}e^{-t^2/2} - \frac{1}{V[x]^2}$$
	
	Here, we will make two approximations. First, the exact value of $\gamma$ is $\sum_{z \in A_t}g(x, z)(1-g(x, z))(1-2g(x, z))$. Therefore, $\gamma > -V[x]^2$, so $\gamma(t^2-1) >  -V[x]^2t^2$
	
	Second, we will substitute the lower tail bound for $1-\Phi(t) \geq \frac{1}{\sqrt{2\pi}}\left(t^{-1} - t^{-3}\right)e^{-t^2/2} \geq \frac{1}{t\sqrt{8\pi}}e^{-t^2/2}$ for $t\geq2$.
	
	This leaves us with:
	$$\mathbb{P}\left(F_t(x) > E[x] + tV[x]-1/2\right) > \frac{1}{t\sqrt{8\pi}}e^{-t^2/2} - \frac{t^2}{6\sqrt{2\pi} V[x]}e^{-t^2/2} - \frac{1}{V[x]^2}$$
	Setting $t = \Theta(1)/V[x]$, this occurs with constant positive probability $p$.

    The probability that there are not $k$ points which exceed $C=E[x]$ is at least ${k^{1+d\epsilon}\choose k}(1-p)^{k^{1+d/2}-k} = (1-p)^{k^{1+d\epsilon}(1-o(1))}$. 

    The number of possible subsets $A$ is at most ${n \choose k} < n^k = e^{k \log n}$. 

    By the union bound, this holds for all subsets $A$ with high probability. 
\end{proof}

Returning to the proof of the original theorem, we recall that there exists a point $w$ such that:
$$E[w] \geq E[z'] + \frac{r}{8}\cdot k\frac{r}{8\sigma^2}e^{-2r^2/\sigma^2}(1-o(1)) = E[z] + \Theta(1)k\frac{r^2}{\sigma^2} = E[z'] + \Theta(k^{1/3 + 2\epsilon})$$

Comparing this to $k-E[z']$
$$k - E[z'] \leq  k(1 - e^{-2r^2/\sigma^2})\leq k \frac{2r^2}{\sigma^2} = 2k^{1/3+2\epsilon}$$

We can apply Lemma \ref{lem-chernoff} to $k-E[z']$; let $Z = k-F(z'; A)$. Then, $\mathbb{P}(Z < (1-\delta)\mathbb{E}Z])\leq e^{-\delta^2 \mathbb{E}Z/2}$. So, 
$$\mathbb{P}(Z <\mathbb{E}Z - \Theta(1)\mathbb{E}Z) < e^{-\Theta(1) k^{1/3+2\epsilon}} $$

The probability that there exist $k^{2/3}$ points which violate the condition is at most:

$$ {n \choose k^{2/3}}(e^{-\Theta(1) k^{1/3+2\epsilon}})^{k^{2/3}} < e^{k^{2/3}\log n}e^{-\Theta(1)k^{1+2\epsilon} \log n}$$

Since there are at most ${n \choose k} = O(e^{k \log n})$ possible $k$-subsets of $V$, this is true by the union bound for all subsets $A$ with high probability.

\noindent \textbf{Case 2}: In this section, we assume that no such imbalanced partition of $I$ exists. For all $x \in \partial I$, denoting $R_2 = \{z\in I : x-z \cdot (\frac{x-p}{\lVert x-p \rVert} > r/4)$, $|R_2 \cap A|\geq k/(\ln k)^{2}$. We will show that for any $z$ within $\Delta r$ of the boundary of $I$, $\mathbb{P}(F(z; A) > C)< 1/n^3$.

Fix $x$, and assume without loss of generality that $p = [r,0,0,\dots, 0]$ and $x = [0,0,\dots, 0]$

Let $z = [z_1,0,0,\dots,0]$ where $-\sigma\log n < z_1 < \Delta r = O(r/\ln k)$. Then, $\max_{y \in R_1} \frac{z_1 - y_1}{\sigma^2}g(y,z) = \frac{z_1}{\sigma^2}\exp\left(\frac{-z_1^2}{2\sigma^2}\right)< \frac{\Delta r}{\sigma^2}\exp\left(\frac{-(\Delta r)^2}{2\sigma^2}\right) = \frac{\Delta r}{\sigma^2}(1-o(1))$ Also, separately taking the minima of $y_1 - z_1$ and $g(y,z)$, $\min_{y \in R_2} \frac{y_1 - z_1}{\sigma^2}g(y,z) = \frac{r}{4\sigma^2}(1-o(1))$. Bounding the derivative again:
\begin{equation}
\frac{\partial}{\partial z_1}E[z]\geq -|A \cap R_1| \frac{\Delta r}{\sigma^2}(1-o(1)) + |A \cap R_2|\frac{r}{4\sigma^2}(1-o(1)) - |A\setminus I| \frac{O(1)}{\sigma}
\end{equation}
By the assumption of the case, $|A \cap R_2| \geq k/(\ln k)^2$. Also, by the assumption of the theorem $|A\setminus I| = O(k^{2/3})$. Replacing this:
$$\frac{\partial}{\partial z_1}E[z]\geq -k \frac{\Delta r}{\sigma^2}(1-o(1)) +\frac{kr}{4\sigma^2(\ln k)^2}(1-o(1)) -\frac{O(k^{2/3})}{\sigma}$$

Define $\Delta r = r/(\ln k)^3$.
$$\frac{\partial}{\partial z_1}E[z]\geq -k(1-o(1)) \frac{r}{\sigma^2 (\ln k)^3}+\frac{\Theta(k)r}{\sigma^2(\ln k)^{2}} - \frac{O(k^{2/3})}{\sigma}=\frac{kr}{\sigma^2 (\ln k)^{3}}\left[1-o(1) + \Theta(\ln k)\right]$$

Suppose $z = [z_1, 0, 0, \dots, 0]$ where $z_1 < \Delta r/2$. Let $w = z_1 + [\Delta r/2, 0, 0, \dots, 0]$. Using the lower bound on the derivative, 

$$E[w] \geq E[z] + \frac{\Delta r}{2}\cdot\frac{kr}{\sigma^2 (\ln k)^{3}}\left[1-o(1) + \ln k\right] \geq E[z] + \frac{1}{4}\frac{kr^2}{\sigma^2}\ln k$$
Using the same bound as above for $k-E[z]$: 
$$\hat k - E[z] \leq \hat k(1 - e^{-2r^2/\sigma^2})\leq k \frac{2r^2}{\sigma^2}$$
We can apply Lemma \ref{lem-chernoff} to $k-E[z']$; let $Z = k-F(z'; A)$. Then, $\mathbb{P}(Z < (1-\delta)\mathbb{E}Z])\leq e^{-\delta^2 \mathbb{E}Z/2}$. So, 
$$\mathbb{P}(Z <\mathbb{E}Z - \Theta(1)\mathbb{E}Z) < e^{-\Theta(1) k^{1/3+2\epsilon}} $$
The probability that there exist $k^{2/3}$ points which violate the condition is at most:

$$ {n \choose k^{2/3}}(e^{-\Theta(1) k^{1/3+2\epsilon}})^{k^{2/3}} < e^{k^{2/3}\log n}e^{-\Theta(1)k^{1+2\epsilon}}$$

Since there are at most ${n \choose k} = O(e^{k \log n})$ possible $k$-subsets of $V$, this is true by the union bound for all subsets $A$ with high probability.

\end{proof}

\section{Continuous $\alpha$-cap process}
\label{sec:cts}
This section considers a continuous analog of the $k$-cap process. To understand the connection, one can imagine a graph with infinite nodes whose hidden variables span a subspace of $\mathbb{R}$. Rather than choosing a fixed $k$ vertices to fire, a constant fraction $\alpha$ of this subspace is activated. 

For clarity, the definition for the $\alpha$-cap process in one dimension is restate below. We assume that the hidden variables are drawn from $[0, 1]$; however, the analysis will be similar for any finite interval. 
\begin{definition}[$\alpha$-cap Process in 1-D]
	\label{alpha-cap-1D}
	Let $A_0$ be a finite union of intervals on [0,1]. Let $\alpha = |A_0|$ and for an integrable function $g:[0,1]\rightarrow \mathbb{R}$,  
	let
	\begin{equation}
	F_t(x) = \int_{0}^{1} A_t(y)g(y-x)\, dy
	\end{equation}
	Then, define the next step
	\[ A_{t+1}(x)=\begin{cases} 
	0 & F_t(x)<C_{t} \\
	1 & F_t(x)\geq C_t\\
	\end{cases}
	\]
	where $C_t \in [0,1]$ is the solution to $\int_0^1 A_{t+1}(x) dx = \alpha$.
\end{definition}
The goal of this section will be to show that this process converges to a single interval, and that the convergence time depends on properties of $g$ and $g'$. 

Two motivating examples of the function $g$ are (proportional to) the Gaussian density with variance $\sigma^2$, 
$g(x)=\exp(\frac{-x^2}{2\sigma^2})$
and the inverse square distance $g(x) = 1/(1+x^2)$. 

First,  we will prove that single intervals of width $\alpha$ are the only possible fixed points. 

\begin{theorem}[Fixed Points]
	\label{gauss_fixed_pt}
	For any even, nonnegative, integrable function $g:[0,1]\rightarrow \Re_+$ with $g'(x) < 0$ for all $x > 0$, 
	the only fixed points ($A_{t+1}=A_{t}$) of the $\alpha$-cap Process are single intervals of width $\alpha$. 
\end{theorem}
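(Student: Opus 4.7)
The plan is to establish two things: (i) every single interval of width $\alpha$ is a fixed point, and (ii) nothing else is. Direction (i) follows immediately from the hypotheses on $g$: if $A = [a, a+\alpha]$, then $F(x) = \int_{a-x}^{a+\alpha-x} g(u)\,du$ is symmetric and strictly unimodal about $x = a + \alpha/2$ (by evenness of $g$ and the sign condition $g'(u)<0$ for $u>0$), so its top-$\alpha$ superlevel set is again exactly $[a, a+\alpha]$.

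For direction (ii), I would suppose $A$ is a fixed point of measure $\alpha$ that is not a single interval and derive a contradiction. Because $g$ is integrable and the indicator of $A$ is bounded, $F$ is continuous; hence $A = \{x : F(x) \geq C\}$ is closed and decomposes as a disjoint union of closed intervals. Let $[a_1, b_1]$ be the leftmost component. Since $A$ is not a single interval, the remainder $A' = A \setminus [a_1, b_1]$ has positive measure $\alpha - (b_1 - a_1) > 0$ and lies strictly to the right of $b_1$, which in particular forces $b_1 < 1$.

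The key step is a direct comparison of $F(a_1)$ and $F(b_1)$. Split the defining integral into contributions from $[a_1, b_1]$ and from $A'$. The change of variables $y \mapsto a_1 + b_1 - y$ combined with evenness of $g$ shows that the $[a_1, b_1]$-contribution takes the same value at $x = a_1$ and at $x = b_1$. For the $A'$-contribution, every $y \in A'$ satisfies $0 < y - b_1 < y - a_1$, so the strict monotonicity of $g$ on $(0, \infty)$ gives $g(y - b_1) > g(y - a_1)$ pointwise, and integrating over a set of positive measure yields the strict inequality $F(b_1) > F(a_1)$. But continuity of $F$ at the interior boundary point $b_1 < 1$ forces $F(b_1) = C$, while $a_1 \in A$ forces $F(a_1) \geq C$. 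Combining, $C = F(b_1) > F(a_1) \geq C$, a contradiction. The one subtlety I expect to handle is the edge case $a_1 = 0$, where $F(a_1) = C$ need not hold; however, the weaker bound $F(a_1) \geq C$ is all the contradiction requires, so the argument is robust to this boundary case.
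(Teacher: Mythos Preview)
Your proposal is correct and follows essentially the same approach as the paper. The paper also splits into the same two directions, uses the same symmetry/monotonicity argument for (i), and for (ii) compares the values of $F$ at the two endpoints of an extreme component---the only cosmetic difference is that the paper works with the \emph{rightmost} interval $[a_n,b_n]$ and derives $F(a_n)>F(b_n)$, then finishes via a continuity argument that points just left of $a_n$ land in $A_{t+1}$, whereas you work with the leftmost interval and finish with the cleaner observation $F(b_1)=C\le F(a_1)<F(b_1)$.
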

The next lemma follows from the properties of $g$.
\begin{lemma}\label{lem:sym}
	The following holds for all $b>a$:
	\begin{equation*}
	\int_{a}^{b}g(y-a)\, dy=\int_{a}^{b} g(y-b)\, dy\\
	\end{equation*}
	
\end{lemma}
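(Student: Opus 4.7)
The plan is to prove Lemma \ref{lem:sym} via a single change of variables that exploits the evenness of $g$. The two integrands $g(y-a)$ and $g(y-b)$, viewed as functions of $y$ on $[a,b]$, are reflections of each other about the midpoint $(a+b)/2$: the first is a copy of $g$ centered at $a$ and the second is a copy centered at $b$. Since the interval $[a,b]$ is itself symmetric about $(a+b)/2$, integrating either one over $[a,b]$ should yield the same value, provided $g$ is symmetric about its own center — which is exactly the assumption $g(-x)=g(x)$.

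Concretely, I would substitute $z = a+b-y$ in the left-hand integral. Then $y=a$ corresponds to $z=b$, $y=b$ corresponds to $z=a$, and $dy = -dz$, so
\begin{equation*}
\int_{a}^{b} g(y-a)\, dy \;=\; \int_{b}^{a} g(b-z)\,(-dz) \;=\; \int_{a}^{b} g(b-z)\, dz.
\end{equation*}
Applying evenness of $g$ gives $g(b-z) = g(-(z-b)) = g(z-b)$, and renaming the dummy variable $z$ back to $y$ yields the right-hand side.

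There is no serious obstacle here. The only hypotheses used are that $g$ is even (the other properties in the statement of Theorem \ref{thm:cts_convergence}, such as non-negativity, differentiability, and $g'(x)<0$ for $x>0$, play no role in this lemma) and that $g$ is integrable on $[a,b]$, which is guaranteed by the standing hypothesis that $g$ is integrable on $[0,1]$. The result will serve in the subsequent fixed-point and convergence arguments because it lets us equate the ``self-input'' of an interval received at its left endpoint with that received at its right endpoint, which is the basic symmetry underlying the fact that a single interval of width $\alpha$ is a fixed point of the $\alpha$-cap process.
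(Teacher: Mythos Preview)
Your proof is correct and is exactly the argument the paper has in mind: the paper simply states that the lemma ``follows from the properties of $g$'' without further detail, and your substitution $z=a+b-y$ together with evenness of $g$ is the natural way to cash this out. There is nothing to add.
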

We proceed to the proof of the fixed point characterization.
\begin{proof}[Proof of Theorem~\ref{gauss_fixed_pt}]
	First, we show that if $A_t= [a, b]$ is a single interval, then it is a fixed point.  Since $A_t$ is 1 on the interval and 0 elsewhere, we can rewrite $F_t(x)$:
	\begin{equation}
	F_t(x)=\int_{a}^{b} g(y-x)\, dy
	\end{equation}
	Let $C_t=F_t(a)=F_t(b)=\int_{a}^{b} g(y-a)\, dy$. If $x\in[a, b]$,
	\begin{equation*}
	F_t(x)=\int_{a}^{b}g(y-x)\, dy\geq \int_{a}^{b} g(y-a)\, dy=C_t
	\end{equation*}
	It's easiest to see this by breaking it into two integrals. The first is
	\begin{equation*}
	\int_{a}^{x}g(y-x)\, dy=\int_{a}^{x} g(y-a)\, dy\\
	\end{equation*}
	which holds by Lemma~\ref{lem:sym}, and the second is
	\begin{equation*}
	\int_{x}^{b}g(y-x)\, dy\geq \int_{x}^{b} g(y-a)\, dy
	\end{equation*}
	which holds because $a<x$, which means $g(y-x)>g(y-a)$ for all $y\in[x, b]$. \\
	Therefore, $F_t(x)\geq F_t(a)=C_t$ for all $x\in[a, b]$. \\
	Similarly, if $x<a$ or $x>b$, 
	\begin{equation*}
	F_t(x)=\int_{a}^{b}g(y-x)\, dy<\int_{a}^{b} g(y-a)\, dy=C_t
	\end{equation*} 
	
	This implies that if $C_t$ is chosen in this way, then $A_{t+1}=[a, b]=A_t$.

	Next, let $A_t$ be the union of finite intervals. Let $A_t=\bigcup_{j=1}^{n}[a_j, b_j]$ where for all $j< n$, $a_j<b_j<a_{j+1}<b_{j+1}$, and $n>1$. We will show that this is not fixed. 
	
	$F_t(x)$ can be expressed as the following:
	\begin{equation*}
	F_t(x)=\sum_{j=1}^{n}\int_{a_j}^{b_j}g(y-x)\, dy
	\end{equation*}
	Consider $F_t(a_n)$ and $F_t(b_n)$.
	\begin{equation*}
	F_t(a_n)=\int_{a_n}^{b_n}g(y-a_n)\, dy + \sum_{j=1}^{n-1}\int_{a_j}^{b_j}g(y-a_n)\, dy
	\end{equation*}
	\begin{equation*}
	F_t(b_n)=\int_{a_n}^{b_n}g(y-b_n)\, dy + \sum_{j=1}^{n-1}\int_{a_j}^{b_j}g(y-b_n)\, dy
	\end{equation*}
	By Lemma~\ref{lem:sym}, $\int_{a_n}^{b_n}g(y-a_n)\, dy=\int_{a_n}^{b_n}g(y-b_n)\, dy$. Also, since $a_n<b_n$, $\int_{a_j}^{b_j}g(y-a_n)\, dy>\int_{a_j}^{b_j}g(y-b_n)\, dy$ for all $j<n$. Therefore, $F_t(a_n)>F_t(b_n)$. \\
	Given this, there is no value of $C_t$ where $F_t(x)$ is greater for all $x\in [a_j, b_j]$ and less for all $x\notin A_t$. Notice that by definition, if $A_{t+1}=A_t$, $C_t\leq F_t(x)$ for all $x\in A_{t+1}$. In particular, $C_t\leq F_t(b_n) < F_t(a_n)$. However, since $F_t(x)$ is a continuous function, $C_t<F_t(x)$ in some small window $[a_n-\epsilon, a_n]$. By definition, $a_n-\epsilon \in A_{t+1}$, but $a_n-\epsilon \notin A_t$ for small enough $\epsilon$. This implies $A_t \neq A_{t+1}$. 
\end{proof}
Now, we will prove the main convergence theorem, which we originally introduced in Section \ref{sec:cts}. 
\thmcts*

\begin{proof}[Proof of Theorem~\ref{thm:cts_convergence}]
	At a given step $t \geq 0$, $A_t$ is a union of finite intervals on [0,1]. We will show that if the number of intervals is greater than 1, the distance between the midpoints of the first and last intervals decreases at each step, and this decrease is not diminishing. \\
	Let $A_t=\bigcup_{j=1}^{n}[a_j, b_j]$ where the intervals are disjoint and increasing; for all $j< n$, $0\leq a_j<b_j<a_{j+1}<b_{j+1}\leq 1$. Define the midpoint of the kth interval $m_k=\frac{a_k+b_k}{2}$. \\
	By~\ref{gauss_fixed_pt}, if $n=1$, then the process has converged (i.e. $A_{t+1}=A_t$). If $n>1$, $[a_1, b_1]$ and $[a_n, b_n]$ are the first and last intervals in $A_t$, respectively. We will show that the distance between the midpoints, $m_n-m_1$, decreases by at least a constant. \\
	
	Suppose the first local maximum of $F_t$ occurs at a value $m_1 + \delta$. This proof will show that the shift in the midpoint $m_1$ is bounded from below by a constant depending on $\delta$. Then, it will show that if $[a_1, b_1]$ is not large  ($b_1-a_1<\frac{\alpha}{2}$), $\delta$ is also bounded from below. If $[a_1, b_1]$ is large, a symmetric argument shows that $m_n$ must shift.\\
	
	Recall the definition of $F_t$ and write its derivative:
	\begin{align*}
	F_t(x) &= \sum_{k=1}^{n} \int_{a_k}^{b_k}g(y-x)\, dy\\
	\frac{dF_t}{dx} &= \sum _{k=1}^{n}g(a_k-x)-g(b_k-x)\\
	\end{align*}
	
	Since $g(x)$ decreases with $|x|$, if $x<m_k$, $g(a_k-x)> g(b_k-x)$. This implies $F_t$ is increasing on [0,$m_1$), so the first local maximum must occur at $m_1+\delta > m_1$. \\
	Let $z$ be the minimum value where $F_t(x)\geq C_t$. If $z\geq m_1$, it is simple to show that the shift in $m_1$ depends on $\delta$.
	\begin{itemize}
		\item If $z\geq m_1+\delta$, then $A_{t+1} \cap [0, m_1+\delta]=\emptyset$. Therefore, the midpoint of the first interval is greater than $m_1+\delta$.
		\item If $ m_1\leq z<m_1+\delta$, then $[z, m_1+\delta] \subset A_{t+1}$. The midpoint of the first interval is greater than $\frac{z+m_1+\delta}{2} \geq m_1+\frac{\delta}{2}$. 
		\item If $z<m_1$, the proof is more involved. Let $z=m_1 - \epsilon$ for an $\epsilon > 0$.
	\end{itemize}
	
	Note that it is possible that $z=m_1-\epsilon<a_1$, such that the left end of the interval decreases. However, the midpoint of the interval will always increase. The influence of $F_t$ from the first interval is the same for $m_1-\epsilon$ and $m_1+\epsilon$.
	
	\begin{align*}
	\int_{a_1}^{b_1} g(y-(m_1-\epsilon))\, dy &=\int_{-m_1+\epsilon}^{m_1+\epsilon} g(z)\, dz\\
	&=-\int_{m_1-\epsilon}^{-m_1-\epsilon} g(-z)\, dz\\
	&=\int_{-m_1-\epsilon}^{m_1-\epsilon} g(z)\, dz =\int_{a_1}^{b_1} g(y-(m_1+\epsilon))\, dy
	\end{align*}
	For any $[a_k, b_k]$ where $m_1<a_k<b_k$, 
	\begin{align*}
	\int_{a_k}^{b_k} g(y-(m_1+\epsilon))\, dy &=\int_{a_k-m_1-\epsilon}^{b_k-m_1-\epsilon} g(z)\, dz \\
	&>\int_{a_k-m_1+\epsilon}^{b_k-m_1+\epsilon} g(z)\, dz\\
	&=\int_{a_k}^{b_k} g(y-(m_1-\epsilon))\, dy
	\end{align*}
	This implies that $F_t(m_1+\epsilon) > F_t(m_1-\epsilon)$ for any $\epsilon > 0$.  Since $F_t$ is continuous, there is a small value $\epsilon'>0$ such that $F_t(m_1+\epsilon+\epsilon')=C_t$, and $F_t(x)>C_t$ in between. At $A_{t+1}$, the first interval becomes $[m_1-\epsilon, m_1+\epsilon+\epsilon']$, which has the midpoint $m_1+\frac{\epsilon'}{2}$. Therefore, the midpoint of the first interval increases.\\
	\begin{figure}[h]
		\includegraphics[width=0.5\textwidth]{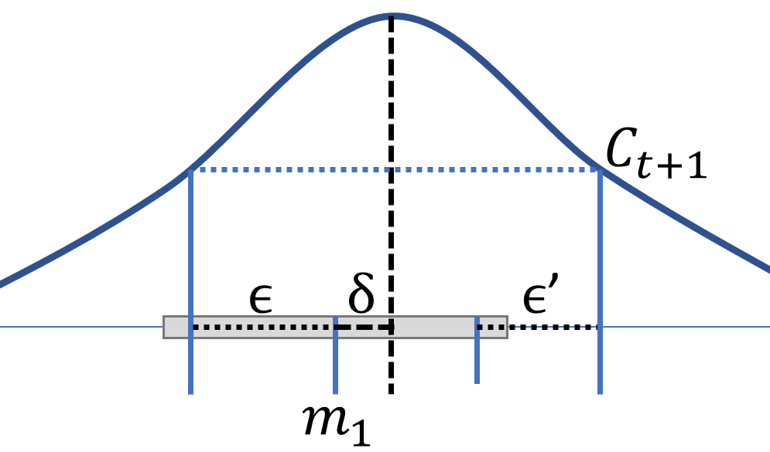}
		\centering
		\caption{An illustration of the terms defined in this proof. The grey box represents the first interval, $[a_1, b_1]$. The curve is $F_t$.}
		\label{fig:defs}
	\end{figure}
	
	Next, we will show that the $\epsilon'$ is bounded below by a constant factor of $\delta$. 
	Either $[a_1, b_1]$ or $[a_n, b_n]$ must be smaller than $\frac{\alpha}{2}$. Assume that $b_1-a_1 \leq \frac{\alpha}{2}$; if not, a symmetric argument applies to the last interval. 
	\begin{claim}
		\label{claim-eps}
		Assume that $b_1-a_1 < \frac{\alpha}{2}$. Then, $$F_t(m_1+\epsilon)- F_t(m_1-\epsilon)\geq \frac{\alpha}{2}\min\{\frac{\alpha}{4}, \epsilon\}\min_{y \in \{\frac{\alpha}{4}, 2\}}|g'(y)|$$
	\end{claim}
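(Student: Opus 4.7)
The plan is to reduce $F_t(m_1+\epsilon) - F_t(m_1-\epsilon)$ to an integral of a nonnegative integrand over $\bigcup_{k \geq 2}[a_k,b_k]$, then separately lower-bound both the pointwise integrand on a substantial sub-region and the measure of that sub-region. Expanding,
\[
F_t(m_1+\epsilon) - F_t(m_1-\epsilon) \;=\; \sum_{k=1}^{n} \int_{a_k}^{b_k} \bigl[g(y-m_1-\epsilon) - g(y-m_1+\epsilon)\bigr]\, dy.
\]
The $k=1$ term vanishes by the symmetric substitution already carried out in the paragraph just above the claim. For $k \geq 2$, setting $u := y - m_1$ gives $u \geq a_k - m_1 > b_1 - m_1 \geq 0$, and evenness plus strict monotonicity of $g$ on $(0,\infty)$ make each integrand $g(u-\epsilon) - g(u+\epsilon)$ strictly positive.

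For the pointwise lower bound, I would invoke the fundamental theorem of calculus together with the evenness of $g$ to write
\[
g(u-\epsilon) - g(u+\epsilon) \;=\; \int_{|u-\epsilon|}^{u+\epsilon} |g'(s)|\, ds,
\]
then restrict the integration to the right half $J := [\max(u,\epsilon),\, u+\epsilon]$, which has length exactly $\min(u,\epsilon)$. When $u \geq \alpha/4$, the left endpoint of $J$ satisfies $\max(u,\epsilon) \geq \alpha/4$; since $u \leq 1$ and $\epsilon \leq m_1 \leq 1$ give $u+\epsilon \leq 2$, we obtain $J \subseteq [\alpha/4,\,2]$ in both cases $u \geq \epsilon$ and $u < \epsilon$. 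Therefore
\[
g(u-\epsilon) - g(u+\epsilon) \;\geq\; \min(u,\epsilon)\cdot \min_{s \in [\alpha/4,\,2]}|g'(s)| \;\geq\; \min(\alpha/4,\epsilon)\cdot \min_{s \in [\alpha/4,\,2]}|g'(s)|,
\]
where the last step uses $u \geq \alpha/4$ (whence $\min(u,\epsilon) \geq \min(\alpha/4,\epsilon)$).

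The final ingredient is a measure estimate on $S := \{y \in \bigcup_{k \geq 2}[a_k,b_k] : y - m_1 \geq \alpha/4\}$. Because the intervals for $k \geq 2$ lie to the right of $b_1$, the complement of $S$ inside $\bigcup_{k \geq 2}[a_k,b_k]$ sits in $(b_1,\, m_1 + \alpha/4)$, whose length is $\alpha/4 - (b_1 - m_1) = \alpha/4 - (b_1 - a_1)/2$, while the total measure of $\bigcup_{k \geq 2}[a_k,b_k]$ is $\alpha - (b_1 - a_1)$. Subtracting,
\[
|S| \;\geq\; \bigl(\alpha - (b_1-a_1)\bigr) - \Bigl(\tfrac{\alpha}{4} - \tfrac{b_1-a_1}{2}\Bigr) \;=\; \tfrac{3\alpha}{4} - \tfrac{b_1 - a_1}{2} \;\geq\; \tfrac{\alpha}{2},
\]
using the hypothesis $b_1 - a_1 \leq \alpha/2$ at the last step. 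Integrating the pointwise bound over $S$ and discarding the (still nonnegative) contribution from $\bigcup_{k \geq 2}[a_k,b_k] \setminus S$ yields the claimed inequality. The only mildly delicate point is verifying $J \subseteq [\alpha/4, 2]$ in the regime $u < \epsilon$, where one uses that the case hypothesis forces $\epsilon \geq u \geq \alpha/4$, so the left endpoint $\max(u,\epsilon) = \epsilon$ is already at least $\alpha/4$.
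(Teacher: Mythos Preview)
Your proof is correct and follows the same overall strategy as the paper: cancel the $k=1$ contribution by symmetry, restrict to a sub-region of $\bigcup_{k\geq 2}[a_k,b_k]$ of measure at least $\alpha/2$ on which $y-m_1\geq \alpha/4$, and bound the pointwise integrand from below by $\min(\alpha/4,\epsilon)\cdot\min_{[\alpha/4,2]}|g'|$ via the fundamental theorem of calculus.

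The organization differs in two minor but pleasant ways. First, the paper identifies the half-measure region as $[m,b_n]\cap A_t$ using the median $m$ of $A_t$ (and the fact that $m-m_1>\alpha/4$), whereas you define $S$ directly by the condition $y-m_1\geq\alpha/4$ and obtain $|S|\geq\alpha/2$ from an explicit length computation; this bypasses the median entirely. Second, the paper splits into the cases $\epsilon<m-m_1$ and $\epsilon\geq m-m_1$ to control the sign of $y-m_1-\epsilon$ in the mean-value step, while your device of integrating $|g'|$ only over $J=[\max(u,\epsilon),u+\epsilon]$ handles both regimes uniformly. The upshot is the same bound, but your presentation is somewhat more economical. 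One small remark: your observation $J\subseteq[\alpha/4,2]$ is actually immediate from $\max(u,\epsilon)\geq u\geq\alpha/4$ in \emph{both} sub-cases, so the ``mildly delicate point'' you flag is not delicate at all.
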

	\begin{proof}
		Let $m$ be the median of $A_t$; by the assumption, $m\notin[a_1, b_1]$, so $m \geq a_2$. 
		\begin{align*}
		F_t(m_1+\epsilon)- F_t(m_1-\epsilon) &= \sum_{k=1}^{n}\int_{a_k}^{b_k} g(y-(m_1+\epsilon))\, dy- \sum_{k=1}^{n}\int_{a_k}^{b_k} g(y-(m_1-\epsilon))\, dy\\ 
		&=\sum_{k=2}^{n}\int_{a_k}^{b_k} g(y-(m_1+\epsilon))-g(y-(m_1-\epsilon))\, dy\\
		&\geq \sum_{k=2}^{n} (b_k-a_k)[\min_{y\in[a_k, b_k]}g(y-(m_1+\epsilon))-g(y-(m_1-\epsilon))]\\
		&\geq \frac{\alpha}{2}[\min_{y\in[m, b_n]}g(y-(m_1+\epsilon))-g(y-(m_1-\epsilon))]\\
		\end{align*}
		By definition, $y-m_1\geq m-m_1>\frac{\alpha}{4}$; in the case where $m-(m_1+\epsilon) > 0$,  $$g(y-m_1-\epsilon)-g(y-m_1+\epsilon) > g(y-m_1)-g(y-m_1+\epsilon)\geq \epsilon \min_{z\in[0, \epsilon]}|g'(y-m_1+z)|$$ 
		
		If $\epsilon$ is large enough such that $y-m_1-\epsilon < 0$ for some $y$, then by the symmetry of $g$, $g(y-m_1-\epsilon) = g(m_1+\epsilon-y)$. Therefore, 
		\begin{align*}
		g(y-m_1-\epsilon)-g(y-m_1+\epsilon) &= g(m_1+\epsilon-y) - g(y-m_1+\epsilon) \geq g(\epsilon)-g(y-m_1+\epsilon) \\
		&\geq [y-m_1]\min_{z\in[0, y-m_1]}|g'(\epsilon+z)|
		\end{align*}
		In both cases, $F_t(m_1+\epsilon)- F_t(m_1-\epsilon)$ by restricting $g'$. Let $c_1=\min_{y \in [\frac{\alpha}{4}, 2]}|g'(y)|\leq\min_{y \in [m-m_1, b_n+\epsilon-m_1]}|g'(y)|$. Since $g'$ is strictly decreasing, Therefore, for $\epsilon$ small ($\epsilon < m-m_1$): 
		\begin{align*}
		F_t(m_1+\epsilon)- F_t(m_1-\epsilon) &\geq \frac{\alpha}{2}[\min_{y\in[m, b_n]}g(y-(m_1+\epsilon))-g(y-(m_1-\epsilon))]\\
		&\geq \frac{\alpha\epsilon}{2} \min_{y\in[m, b_n]} \min_{z\in[0, \epsilon]}g'(y-m_1+z) \\
		&\geq \frac{\alpha\epsilon}{2} \min_{y\in[m, b_n+\epsilon]} g'(y-m_1)\\
		&\geq \frac{\alpha}{2} \epsilon c_1
		\end{align*}
		
		For $\epsilon$ large ($\epsilon \geq m-m_1$): 
		\begin{align*}
		F_t(m_1+\epsilon)- F_t(m_1-\epsilon) &\geq \frac{\alpha}{2}[\min_{y\in[a_2, b_n]}g(y-(m_1+\epsilon))-g(y-(m_1-\epsilon))]\\
		&\geq \frac{\alpha}{2} \min_{y\in[m, b_n]} [y-m_1]\min_{z\in[0, y-m_1]}|g'(\epsilon+z)|\\
		&\geq  \frac{\alpha}{2}\frac{\alpha}{4}\min_{z\in[0, b_n-m_1]}|g'(\epsilon+z)|\\
		&\geq \frac{\alpha}{2}\frac{\alpha}{4} c_1
		\end{align*}
		
		In this case, $F_t(m_1+\epsilon)- F_t(m_1-\epsilon)\geq \frac{\alpha}{2}\min\{\frac{\alpha}{4}, \epsilon\}c_1$. \\
	\end{proof}
	
	Let $c_2=g(0)-g(1)$. Since $g$ is continuous and decreasing, $c_2>0$ and $|\frac{dF_t}{dx}| =  |\sum _{k=1}^{n}g(a_k-x)-g(b_k-x)|\leq c_2$ . By~\ref{claim-eps}, 
	$$F_t(m_1+\epsilon)- F_t(m_1+\epsilon+\epsilon')\geq \frac{\alpha}{2}\min\{\frac{\alpha}{4}, \epsilon\}c_1$$
	and 
	$$F_t(m_1+\epsilon)- F_t(m_1+\epsilon+\epsilon') \leq c_2\epsilon'$$
	Combining these two equations implies, 
	$$\epsilon' \geq \frac{\alpha}{2}\min\{\frac{\alpha}{4}, \epsilon\}\frac{c_1}{c_2}$$
	The only unbounded value in this equation is $\epsilon$. Recall that $m_1 + \delta$ is defined to be the earliest local maximum of $F_t$; in the case of small $\epsilon$, $[m_1-\epsilon, m_1+\delta]$ is a subset of the first interval. If $\epsilon \leq \frac{\delta}{2}$, then the midpoint is at least $m_1+\frac{\delta}{4}$.\\
	
	Therefore, the shift of the midpoint of the first interval depends on the location of the first local maximum of $F_t$. We will show that this is also bounded from below by a constant value.
	\begin{claim}
		\label{claim-delta}
		\begin{equation*}
		\delta \geq \frac{\min_{y \in [\frac{\alpha}{8}, 1]}|g'(y)|}{\max_{z\in[0, 1]}|g'(z)|}\cdot\frac{\alpha}{8}
		\end{equation*}
	\end{claim}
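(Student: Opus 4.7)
The plan is to use the first-order condition $F_t'(m_1+\delta)=0$ at the local maximum $x^*=m_1+\delta$ directly, rather than integrating derivative estimates (which would require regularity of $g'$ we do not have). Writing
\[
F_t'(x)=\sum_{k=1}^{n}\bigl[g(a_k-x)-g(b_k-x)\bigr],
\]
the key observation is that at $x=m_1$ the $k=1$ contribution vanishes exactly by symmetry of $[a_1,b_1]$ around $m_1$, and at $x^*=m_1+\delta$ it is only perturbed by a quantity of size $O(\delta\max|g'|)$. This perturbation must cancel the strictly positive contribution from intervals $[a_k,b_k]$, $k\ge 2$, which I will show is at least $\Omega\bigl(\alpha\min_{[\alpha/8,1]}|g'|\bigr)$. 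Equating the two bounds immediately yields $\delta\gtrsim(\alpha/4)(\min|g'|/\max|g'|)$, which is stronger than what the claim asks for.

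Concretely, I may assume $\delta\le\alpha/8$ (otherwise the claim is trivial since $\min_{[\alpha/8,1]}|g'|\le\max_{[0,1]}|g'|$). Let $w=(b_1-a_1)/2$, and first treat the case $\delta\le w$. The $k=1$ term of $F_t'(x^*)$ evaluates, using evenness of $g$, to $g(w+\delta)-g(w-\delta)=-\int_{w-\delta}^{w+\delta}|g'(u)|\,du$, so the condition $F_t'(x^*)=0$ rearranges to
\[
\sum_{k\ge 2}\bigl[g(a_k-x^*)-g(b_k-x^*)\bigr]=g(w-\delta)-g(w+\delta)\le 2\delta\max_{[0,1]}|g'|.
\]
For the matching lower bound, this sum equals $\int_{A_t\setminus[a_1,b_1]}|g'(y-x^*)|\,dy$. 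Under the inherited assumption $b_1-a_1\le\alpha/2$ (so $w\le\alpha/4$), the set $A_t\setminus[a_1,b_1]$ has measure $\alpha-2w\ge\alpha/2$ and lies in $(b_1,1]$. Restricting to the subset on which $y-x^*\ge\alpha/8$ removes a $y$-interval of length at most $\max(0,\delta+\alpha/8-w)\le\alpha/4$, still leaving measure at least $\alpha/2$. On that subset $|g'(y-x^*)|\ge\min_{[\alpha/8,1]}|g'|$, yielding the lower bound $(\alpha/2)\min_{[\alpha/8,1]}|g'|$. Combining the two estimates gives $\delta\ge (\alpha/4)(\min|g'|/\max|g'|)$, implying the claim.

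The complementary case $\delta\ge w$ is handled by the same argument: the $k=1$ contribution becomes $g(\delta+w)-g(\delta-w)\le 2w\max|g'|$, and the same lower bound on the left-hand side then forces $w\ge(\alpha/4)(\min|g'|/\max|g'|)$, whence $\delta\ge w$ also satisfies the required bound. The principal technical obstacle is the edge case where $x^*$ lies inside the second interval $[a_2,b_2]$, which can only occur when the gap $a_2-b_1$ is smaller than $\delta-w$; in that regime one splits $[a_2,b_2]$ at $x^*$ and absorbs its left piece into the negative side of the identity, which only weakens the right-hand side bound and therefore preserves the conclusion as long as $\delta\le\alpha/8$. Finally, the case $b_1-a_1>\alpha/2$ is dispatched by running the mirror-image argument from the rightmost interval $[a_n,b_n]$, exactly as for Claim~\ref{claim-eps}.
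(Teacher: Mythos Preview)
Your approach is essentially the paper's: both exploit $F_t'(m_1+\delta)=0$, isolate the (negative) contribution coming from the interval(s) near $x^*=m_1+\delta$ and bound its magnitude by $O(\delta)\max_{[0,1]}|g'|$, then match this against a lower bound of order $\alpha\cdot\min_{[\alpha/8,1]}|g'|$ for the remaining positive contribution. The only difference is cosmetic---you cast the contributions as integrals of $|g'|$, while the paper keeps the term-by-term sum and handles the possibility of several negative terms via a telescoping inequality; your treatment of the edge case $x^*\in[a_2,b_2]$ plays the same role.

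One arithmetic slip to fix: in your case $\delta\le w$, removing a sub-interval of length at most $\alpha/4$ from a set of measure $\alpha-2w\ge\alpha/2$ leaves measure at least $\alpha/4$, not $\alpha/2$; this yields $\delta\ge(\alpha/8)\cdot\min/\max$ rather than the $\alpha/4$ you assert, but that is exactly the bound the claim requires.
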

	\begin{proof}
		By assumption, at $m_1+\delta$, $\frac{dF_t}{dx} = 0$. \\
		
		\begin{align*}
		&\sum_{j=1}^{n}g(a_j-(m_1+\delta))-g(b_j-(m_1+\delta)) = 0\\
		\end{align*}
		Since $g$ is decreasing and symmetric, the sign of $g(a_j-(m_1+\delta))-g(b_j-(m_1+\delta))$ depends on whether $m_1 + \delta$ is closer to $a_j$ or $b_j$. Suppose this term is negative for $j=1,...,k$ and positive for $j=k+1,...,n$. Additionally, assume that $\delta < \frac{\alpha}{8}$; since $b_1-a_1<\frac{\alpha}{2}$, $m_1+\delta < a_1 + \frac{\alpha}{4} + \frac{\alpha}{8}<m$. (Recall that $m$ is defined as the median of $A_t$). Using this, we can assume that $a_{k+1}\leq m$.\\
		
		Again, since $ g $ is decreasing, $g(a_j-x)-g(b_j-x)\geq (b_j-a_j)\min_{y \in [a_j, b_j]}|g'(y-x)|$. \\	
		\begin{align*}
		\sum_{j=k+1}^{n}g(a_j-(m_1+\delta))-g(b_j-(m_1+\delta)) &\geq \sum_{j=k+1}^{n} \min_{y \in [a_j, b_j]}|g'(y-(m_1+\delta)|(b_j-a_j) \\
		&\geq \min_{y \in [m, 1]}|g'(y-(m_1+\delta)|\frac{\alpha}{2}\\
		&\geq\min_{y \in [\frac{\alpha}{8}, 1]}|g'(y)|\frac{\alpha}{2}\\
		\end{align*}	
		For $j=1...k$, $g(a_j-(m_1+\delta))-g(b_j-(m_1+\delta))<0$. 
		\begin{align*}
		\sum_{j=1}^{k}g(b_j-(m_1+\delta))-g(a_j-(m_1+\delta))&\leq g(b_k-(m_1+\delta))-g(a_1-(m_1+\delta))\\
		&\leq g(b_k-(m_1+\delta))-g(a_1-(m_1+\delta))\\
		&\leq (|a_1-(m_1+\delta)|-|b_k-(m_1+\delta)|)\max_{z\in[0, 1]}|g'(z)|\\
		&\leq (2(m_1+\delta) -b_k-a_1))\max_{z\in[0, 1]}|g'(z)|
		\end{align*} 
		
		Recall that since $\frac{dF_t}{dx} = 0$, we have:\\
		\begin{equation*}
		\sum_{j=1}^{k}g(b_j-(m_1+\delta))-g(a_j-(m_1+\delta))=\sum_{j=k+1}^{n}g(a_j-(m_1+\delta))-g(b_j-(m_1+\delta))
		\end{equation*}
		Combining the two equations above:\\
		\begin{equation*}
		(2(m_1+\delta) -b_k-a_1))\max_{z\in[0, 1]}|g'(z)| \geq \min_{y \in [\frac{\alpha}{8}, 1]}|g'(y)|\frac{\alpha}{2}
		\end{equation*}
		Therefore, $\delta$ is bounded:
		\begin{equation*}
		\delta \geq \frac{\min_{y \in [\frac{\alpha}{8}, 1]}|g'(y)|}{\max_{z\in[0, 1]}|g'(z)|}\cdot\frac{\alpha}{4} + \frac{a_1+b_k}{2}-m_1
		\end{equation*}

		Since $b_k \geq b_1$, the midpoint of $a_1$ and $b_k$ is greater than $m_1$. Combining this fact with the earlier assumption that $\delta < \frac{\alpha}{8}$, we have
		\begin{equation*}
		\delta \geq \frac{\min_{y \in [\frac{\alpha}{8}, 1]}|g'(y)|}{\max_{z\in[0, 1]}|g'(z)|}\cdot\frac{\alpha}{8}
		\end{equation*} 
	\end{proof}
	
	This implies that in the case where $b_1-a_1 \leq \frac{\alpha}{2}$, the midpoint of the first interval increases by a constant value. By a symmetric argument, if $b_1-a_1 > \frac{\alpha}{2}$, then $b_n-a_n < \frac{\alpha}{2}$, and the midpoint of the last interval shifts decreases by a constant value. Since $m_n-m_1$ decreases at each step, the process must converge to a single interval in finite steps.

\end{proof}

This theorem indicates that the speed of convergence depends on the function $g$ and the size of $A_0$ $(\alpha)$. 
This process converges to a fixed point in at most 
\[
O\left(\frac{\max_{[0,1]}|g'(x)|}{\min_{[\frac{\alpha}{8},1]}|g'(x)|}\right)
\]
steps. For example, consider the Gaussian function $g(x)=\exp(\frac{-x^2}{2\sigma^2})$. We have $g'(x)=-\frac{x}{\sigma^2}\exp(\frac{-x^2}{2\sigma^2})$. The maximum is:
\[
\max_{[0,1]}|g'(x)| = |g'(\sigma)| = \frac{1}{\sigma}\exp(-1/2)\\
\]
The minimum can occur at either endpoint depending on $\alpha$ and $\sigma$:
\[	
\min_{[\frac{\alpha}{8},1]} |g'(x)|= \min\left(\left|g'\left(\frac{\alpha}{8}\right)\right|,\left|g'(1)\right|\right) \approx \min\left(\frac{\alpha}{8\sigma^2}, \frac{1}{\sigma^2}\exp\left(\frac{-1}{2\sigma^2}\right)\right)
\]
\\
If $\alpha < 8\left(\frac{-1}{2\sigma^2}\right)$, the lower bound on the number of convergence steps is 
\[
O\left(\frac{\frac{1}{\sigma}\exp(-1/2)}{\frac{1}{\sigma^2}\exp\left(\frac{-1}{2\sigma^2}\right)}\right)\approx O(\sigma)
\]\\
Otherwise, the bound is 
\[
O\left(\frac{\frac{1}{\sigma}\exp(-1/2)}{\frac{\alpha}{8\sigma^2}}\right)\approx O(\frac{\sigma}{\alpha})
\]\\
Another example is the inverse square distance $g(x) = \frac{1}{c+x^2}$ for a value $c>0$. We have $g'(x)=\frac{-2x}{(c+x)^2}$. The maximum occurs at $\max_{[0,1]}|g'(x)|=\frac{-3\sqrt{3}}{8}c^{-3/2}$. The minimum occurs at one of the endpoints: $$\min_{[\frac{\alpha}{8},1]}|g'(x)|=\min\{|g'(\alpha/8)|, |g'(1)|\}\approx \min\left(\frac{\alpha}{4c^2+c\alpha}, \frac{2}{(c+1)^2}\right)$$
Therefore, the lower bound on the number of convergence steps when $c$ is not small is approximately:
\[
O\left(\frac{c^{-3/2}(c^2+c\alpha)}{\alpha}\right)= O\left(\frac{c^{1/2}}{\alpha} + c^{-1/2}\right)
\]
For sufficiently small $c$, this gives:
\[
O\left(c^{-3/2}(c+1)^2\right) = O(c^{-3/2})
\]

\section{Conclusion and further questions}
\paragraph{Plasticity.} Our proof shows that plasticity is not necessary for the convergence of the $k$-cap mechanism. Previous analysis of this process on random graphs studied a variant of the problem where edges were given a weight, initially set to 1. If two neighboring vertices fired consecutively, the weight of their edge was boosted by a factor of $1+\beta$. In Erdős–Rényi random graphs, this weight proved to be vital for convergence; it allowed a set of vertices to become associated over time, causing them to fire together~\cite{papadimitriou2020brain, dabagia2021assemblies}. We have shown that, given sufficiently local graph structure, it is possible for the process without plasticity to converge to a subset which is small compared to $n$.
\paragraph{Parameter range.}
As mentioned in the introduction, $\sigma = O(1/k^{1/d})$ is the parameter range where the concentration behavior in step $t=0$ seems to emerge. However, it is unclear whether this is a true 'threshold'; it would be interesting to determine whether $\sigma = k^{-1/d-\epsilon}$ or $\sigma = k^{-1/d+\epsilon}$ behaves like a pure random graph, or whether some weaker convergence behavior emerges. 
\paragraph{General edge probability functions.}
In this paper, we have focused our attention on graphs whose edge probability is proportional to the Gaussian function, $g(x; \sigma) = e^{-x^2/2\sigma^2}$. Our proof exploits the structure of this function; it is important that the edge probability drops off exponentially after a certain distance. One possible direction for future research is to consider alternate edge probability functions, and see if the behavior of the model deviates significantly. 
\paragraph{Further motivation from Neuroscience.}
As discussed above, the geometric model embeds the nodes of the graph as points in space, and it strongly prefers to connect nodes which are close to each other. Many real-world graphs have a spatial component and a cost associated with long-range connections, so the geometric graph model has theoretical guarantees which match empirical properties of graphs in many domains. One such property is the  clustering coefficient, which measures the prevalence of cliques between the immediate neighborhood of the vertex~\cite{boguna2003class}. In the graph model we have discussed thus far, the clustering coefficient is quite high; in fact, within a small neighborhood of any vertex the probability that the vertices form a  clique is exponentially likely. In particular, high clustering between neurons has been observed in the brain~\cite{SongETAL:05}. 
Of course, the model we have studied is simplified, and it lacks many graph-theoretic structures which have been observed in the brain. There may be interesting algorithmic insights which can be gleaned by mimicking empirically observed structures. Two relevant properties are the \textit{power-law degree distribution} and the \textit{small world property}~\cite{bullmore2009complex}. The first property implies that their are a small set of 'hub' neurons with very high degree (in the geometric random graph, the degree distribution is fairly uniform). The second implies that for any two neurons, the length of the shortest path between them is not very large. 
Both of these properties could have interesting implications for the $k$-cap mechanism. One concrete question is whether, in a graph with a power-law degree distribution, the $k$-cap mechanism is likely to converge to a set of vertices with high degree. 

\paragraph{The continuous model in higher dimension.}
An interesting extension of our continuous model and analysis would be to consider the process in a higher dimensional vector space. Even in $\mathbb{R}^2$, where $g$ is inversely proportional to the euclidean distance between two points, the behavior seems significantly more complicated than the case in $\mathbb{R}$. One open question is to characterize the fixed points (i.e., the sets where $A_{t+1} = A_{t}$) in higher dimensional vector spaces. 
\paragraph{Simulations.} A simulation of the discrete $k$-cap process can be accessed through our GitHub repository~\cite{Reid_Simulations_for_the_2022}. 

\paragraph{Acknowledgements.} The authors are deeply grateful to Christos Papadimitriou, Max Dabagia, Debankur Mukherjee and Jai Moondra for helpful comments and discussions. This work was supported in part by NSF awards CCF-1909756, CCF-2007443 and CCF-2134105.

\printbibliography

\section{Appendix}

\subsection{Probability Preliminaries}
\label{sec:prob-prelims}

The following lemma relates to the distribution of uniform random points in $[0,1]$. It will be referred to frequently throughout the proof.

\begin{lemma}
    \label{lem-n-uniform-random}
    All balls of radius $\sqrt{d/2}\left[\frac{6\log n}{n}\right]^{1/d}$ contain at least one vertex of $G$ almost surely.
\end{lemma}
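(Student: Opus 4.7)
The plan is a standard covering/net argument: tile the cube $[0,1]^d$ by small cubes, show every tile contains a vertex with high probability via a union bound, and conclude via a geometric containment that every ball of the given radius must contain some tile entirely.

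First I would choose a tiling scale $s$ such that $s\sqrt{d} \le r$, where $r = \sqrt{d/2}(6\log n/n)^{1/d}$; concretely $s = r/\sqrt{d} = (6\log n/n)^{1/d}/\sqrt{2}$ works. Partition $[0,1]^d$ into axis-aligned cubes of side $s$; there are $\lceil 1/s\rceil^d = O(n/\log n)$ such cubes. The geometric step is: for any $x\in[0,1]^d$ the closest tile center $c^\star$ satisfies $\|c^\star-x\|\le s\sqrt{d}/2$, and then for every corner $v$ of that tile,
\[
\|v-x\| \le \|v-c^\star\| + \|c^\star-x\| \le \frac{s\sqrt{d}}{2} + \frac{s\sqrt{d}}{2} = s\sqrt{d} \le r,
\]
so the whole tile is contained in $B_r(x)$. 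Hence it suffices to show that every tile contains at least one vertex of $G$.

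Next I would estimate the probability that a fixed tile $T$ is empty of vertices. Each of the $n$ vertices is independently uniform on $[0,1]^d$, so
\[
\Pr[T \cap V = \emptyset] = (1 - s^d)^n \le e^{-n s^d}.
\]
With our choice of $s$, $ns^d = (6 \log n)/2^{d/2}$, so $e^{-ns^d} = n^{-6/2^{d/2}}$. Taking a union bound over the $O(n/\log n)$ tiles and using the assumed regime on $d$ (fixed), the probability that some tile is empty is at most $O\!\bigl(n^{1 - 6/2^{d/2}}/\log n\bigr)$, which is summable in $n$. Borel--Cantelli then gives that almost surely, for all but finitely many $n$, every tile contains a vertex, and therefore every ball of radius $r$ contains at least one vertex of $G$.

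The main obstacle is calibrating the geometric and probabilistic constants against each other: we need the tile side $s$ simultaneously large enough that $ns^d \gg \log(1/s)$ (so the union bound beats the number of tiles) and small enough that a full tile fits inside a ball of radius $r$. The bound $s \le r/\sqrt{d}$ forces $ns^d \le n r^d/d^{d/2}$, and the inequality $(\alpha/\sqrt{d})^d > 1$ (where $r = \alpha(\log n/n)^{1/d}$) is what the constant $\sqrt{d/2}\cdot 6^{1/d}$ in the lemma is tuned to satisfy for the dimensions of interest; a mild boundary correction (only tiles meeting $[0,1]^d$ matter, and balls centered close to the boundary of $[0,1]^d$ still contain an interior tile provided $r < 1/2$) completes the argument.
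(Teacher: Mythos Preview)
Your approach is essentially the same as the paper's: tile $[0,1]^d$ by small axis-aligned cubes, show via a union bound that every cube receives a vertex, and observe that any ball of the stated radius must fully contain some cube of the tiling. The only difference is the tile size---the paper takes side $(3\log n/n)^{1/d}$ (volume $3\log n/n$, empty-probability $\le n^{-3}$ per cube, summable union bound) rather than your $s=r/\sqrt d$; both choices run into the same calibration tension for large constant $d$ that you correctly flag.
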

\begin{proof}
Consider dividing $[0,1]^d$ into $n/(3\log n)$ boxes with side length $[(3\log n)/n]^{1/d}$. 

For any box, the probability that it receives no points of $G$ is $(1-(3\log n)/n)^{n}\leq e^{-3\log n} = n^{-3}$. There are $n/3\log n$ boxes, so by the union bound, the probability that all boxes have at least one point of $G$ is $(2n^2\log n)^{-1}$.

A ball of radius $\sqrt{d/2}\left[\frac{6\log n}{n}\right]^{1/d}$ contains a box of side length $\frac{6\log n}{n}$. Any such box contains at least one box of the partition of $[0,1]^d$. Thus, all balls of this radius contains a vertex of $G$ almost surely. 
\end{proof}

	\begin{lemma}[Balls into Bins]
	\label{lem-balls-into-bins}
	    Suppose $m$ balls are assigned uniformly at random to $n$ bins, where $\frac{n}{polylog(n)}\leq m << n\log n$. Then, with probability $1-o(1)$, the maximum load is at least:
		
		$$\frac{\ln n}{\ln \gamma}\left[1 + 0.9\frac{\ln ^{(2)}\gamma}{\ln \gamma}\right]$$
		
		where $\gamma = \frac{n \log n}{m}$.
	\end{lemma}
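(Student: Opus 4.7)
The plan is a straightforward second-moment argument on the count of heavily loaded bins. Let $L_i$ denote the load of bin $i$ and set $k^* = \lceil \frac{\ln n}{\ln\gamma}(1 + 0.9\,\frac{\ln^{(2)}\gamma}{\ln\gamma}) \rceil$, and define $Y = |\{i : L_i \geq k^*\}|$. To show the maximum load is at least $k^*$ with probability $1-o(1)$, it suffices to prove that $\mathbb{E}[Y]\to\infty$ and $\mathrm{Var}(Y) = O(\mathbb{E}[Y])$; Chebyshev then gives $\mathbb{P}(Y = 0) \leq \mathrm{Var}(Y)/\mathbb{E}[Y]^2 \to 0$.

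For the first moment, I would write $\mathbb{E}[Y] = n\cdot\mathbb{P}(\mathrm{Bin}(m, 1/n)\geq k^*)$ and lower-bound by the single $k^*$-th term $n\binom{m}{k^*}n^{-k^*}(1-1/n)^{m-k^*}$. Stirling gives $\binom{m}{k^*} \geq (me/k^*)^{k^*}/O(\sqrt{k^*})\cdot(1-o(1))$, leading to
$$\ln \mathbb{E}[Y] \geq \ln n - k^*\ln\!\Big(\frac{k^* n}{me}\Big) - \frac{m}{n} - O(\ln k^*).$$
Writing $m/n = \ln n/\gamma$ and $\eta = \ln^{(2)}\gamma/\ln\gamma$, one has $k^* n/(me) = \gamma(1+0.9\eta)/(e\ln\gamma)$, so expanding
$$k^*\ln\!\Big(\frac{k^*n}{me}\Big) = \ln n\,(1+0.9\eta)\Big(1 - \eta - \tfrac{1}{\ln\gamma} + O(\eta^2)\Big) = \ln n - 0.1\,\eta\ln n - \tfrac{\ln n}{\ln\gamma} - O(\eta^2\ln n).$$
The dominant surviving term in $\ln \mathbb{E}[Y]$ is $0.1\,\eta \ln n$, which tends to infinity because the range $m\ll n\ln n$ forces $\gamma\to\infty$ and $\ln n/\ln\gamma \to \infty$ (since $\gamma = O(\polylog n)$); meanwhile $m/n = \ln n/\gamma$ is absorbed because $1/\gamma = o(\eta)$, and $\ln k^* = O(\ln\ln n)$ is negligible against $\eta\ln n$. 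Hence $\mathbb{E}[Y]$ grows at least like $n^{\Omega(\ln^{(2)}\gamma/\ln\gamma)}$.

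For the second moment, I would invoke the classical fact that the load vector $(L_1,\dots,L_n)$ from balls-into-bins is negatively associated; in particular the indicators $\mathbf{1}\{L_i \geq k^*\}$ have nonpositive pairwise covariances. Therefore $\mathrm{Var}(Y) \leq \sum_i \mathrm{Var}(\mathbf{1}\{L_i \geq k^*\}) \leq \mathbb{E}[Y]$, which immediately closes the Chebyshev bound.

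The main obstacle is the first-moment bookkeeping: the constant $0.9$ is tight enough that one must carefully retain the $\eta = \ln^{(2)}\gamma/\ln\gamma$ corrections from both Stirling and from expanding $\ln(k^*n/(me)) = \ln\gamma - \ln^{(2)}\gamma - 1 + O(\eta)$, and verify that the $(1-1/n)^{m-k^*}$ factor (contributing $-m/n$ to the log) is dominated by the surplus $0.1\,\eta\ln n$. The parameter constraint $n/\polylog(n) \leq m \ll n\ln n$ is precisely what makes this dominance hold. Once these asymptotics are pinned down, the variance step and Chebyshev are immediate.
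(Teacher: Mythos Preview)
The paper does not prove this lemma; it simply cites Raab and Steger (``Balls into Bins --- A Simple and Tight Analysis''). Your second-moment argument via negative association is exactly the method used in that reference, and your first-moment bookkeeping is correct: the surplus $0.1\,\eta\ln n$ term does dominate the error terms $m/n = \ln n/\gamma$, $O(\ln k^*)$, and $O(\eta^2\ln n)$ under the stated parameter range, since $\gamma\to\infty$ while $\ln\gamma = O(\ln\ln n)$. So your proposal is correct and aligned with the standard proof the paper defers to.
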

 \begin{proof}
 See ~\cite{raab1998balls}
 \end{proof}
\begin{lemma}[Maximum Degree of Geometric Graph]
\label{lem-max-degree}
Let $X = \{x_1, \dots, x_n\}$ be a set of $n$ points chosen uniformly at random on $[0,1]^d$. Define a graph $G(X; r)$ such that there exists an edge between $x_i$ and $x_j$ if $\lVert x_i - x_j\rVert \leq r$.

Define a sequence of radii $(r_n)_n$. Let $\Delta_n$ be the maximum degree of $G$. Define $k_n = \frac{\log n}{\log(\log n/(nr_n^d))}$. 

If $nr_n^d/\log n \rightarrow 0$ and $\log(1/(nr_n^d))/\log(n) \rightarrow 0$ as $n \rightarrow \infty$. Then:
$$ \lim_{n\rightarrow \infty} \frac{\Delta_n}{k_n} = 1 \text{ in probability}$$
and
$$ \liminf_{n\rightarrow \infty} \frac{\Delta_n}{k_n}\geq 1 \text{  Almost surely}$$

\end{lemma}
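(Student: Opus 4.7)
The plan is to establish matching upper and lower bounds on $\Delta_n/k_n$: the upper bound from a per-vertex tail inequality together with a union bound, and the lower bound from a second-moment argument on well-separated candidate clumps. Write $\mu_n := n v_d r_n^d$, where $v_d$ is the volume of the Euclidean unit $d$-ball, so the hypotheses become $\mu_n/\log n \to 0$ and $\log(1/\mu_n) = o(\log n)$; routine manipulation then gives $k_n = (1+o(1))\log n/\log(\log n/\mu_n)$.

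For the upper bound, the degree of any fixed $x_i$ is stochastically dominated by $\mathrm{Bin}(n-1, v_d r_n^d)$, so Chernoff gives $\mathbb{P}(\deg(x_i) \ge m) \le (e\mu_n/m)^m$ for $m \ge e\mu_n$. I would plug in $m = (1+\varepsilon)k_n$ and verify that $\log(m/\mu_n) = (1+o(1))\log(\log n/\mu_n)$, so that $m \log(m/(e\mu_n)) = (1+\varepsilon+o(1))\log n$. A union bound over the $n$ vertices then yields $\mathbb{P}(\Delta_n \ge (1+\varepsilon)k_n) \le n^{-\varepsilon+o(1)} \to 0$, giving $\limsup \Delta_n/k_n \le 1$ in probability.

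For the lower (and a.s.) bound, I would partition $[0,1]^d$ into $N_n = \Theta(r_n^{-d}) = \Theta(n/\mu_n)$ cubes of side $3r_n$ (discarding those within $r_n$ of the boundary) and designate inside each a central sub-cube of side $r_n$. Let $Z_j = 1$ if some $x_i$ lies in the $j$-th central sub-cube \emph{and} has at least $(1-\varepsilon)k_n$ other points within distance $r_n$. Because $B(x_i, r_n)$ then lies entirely in the $j$-th parent cube, after Poissonization the $Z_j$ are mutually independent functions of the point configurations in disjoint cubes. A matching Stirling lower bound $\mathbb{P}(\mathrm{Bin}(n-1, v_d r_n^d) \ge (1-\varepsilon)k_n) \ge n^{-(1-\varepsilon)+o(1)}$, combined with the probability $\Theta(\min\{1,\mu_n\})$ that the central sub-cube is occupied, gives $\mathbb{E}[Z_j] \ge (\mu_n \wedge 1)\, n^{-(1-\varepsilon)+o(1)}$, so summing over cells yields $\mathbb{E}[\sum_j Z_j] \ge n^{\varepsilon+o(1)} \to \infty$ in every sub-regime (using $\mu_n = o(\log n)$). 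Independence then gives $\mathbb{P}(\sum_j Z_j = 0) \le \exp(-n^{\varepsilon+o(1)})$, which is summable in $n$; Borel--Cantelli delivers $\Delta_n \ge (1-\varepsilon)k_n$ eventually almost surely, and taking $\varepsilon \downarrow 0$ along a countable sequence finishes the a.s.\ $\liminf$. Combining the two bounds gives convergence in probability.

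The main obstacle will be the uniform arithmetic that $\log(m/\mu_n) = (1+o(1))\log(\log n/\mu_n)$ across the three regimes $\mu_n \to 0$, $\mu_n = \Theta(1)$, and $\mu_n \to \infty$ with $\mu_n = o(\log n)$; the hypothesis $\log(1/\mu_n) = o(\log n)$ is precisely what prevents either summand in $\log(\log n/\mu_n) = \log\log n + \log(1/\mu_n)$ from destructively cancelling or dominating and is what makes the two tail estimates tight. Boundary effects in the unit cube, the matching Stirling \emph{lower} bound on the Binomial tail, and the de-Poissonization error are standard $o(1)$ corrections and do not affect the leading asymptotics.
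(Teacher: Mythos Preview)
The paper does not prove this lemma at all: its entire ``proof'' is a citation to Theorem~6.10 of Penrose's monograph \emph{Random Geometric Graphs}. Your proposal, by contrast, sketches a self-contained argument, so there is no meaningful overlap to compare.

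That said, your outline is essentially the classical route and is close in spirit to Penrose's own treatment: a Chernoff-plus-union upper bound, and a lower bound obtained by tiling space into well-separated cells, Poissonizing for independence, and applying a second-moment/Borel--Cantelli argument to show that at least one cell contains a sufficiently dense clump. The asymptotic bookkeeping you flag as the ``main obstacle'' --- checking that $\log(m/\mu_n)=(1+o(1))\log(\log n/\mu_n)$ uniformly across the regimes $\mu_n\to 0$, $\mu_n=\Theta(1)$, $\mu_n\to\infty$ with $\mu_n=o(\log n)$ --- is genuine but routine once the two hypotheses are used together, and Penrose handles it in the same way. One small point: in your lower-bound cell construction you want the central sub-cube to have diameter at most $r_n$ (so side $\le r_n/\sqrt{d}$) rather than side $r_n$, to guarantee that any two points in it are within distance $r_n$; this only changes constants. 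Otherwise the sketch is sound and would, if fleshed out, reproduce the cited result.
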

\begin{proof}
See Theorem 6.10 from~\cite{penrose2003random}
\end{proof}

Next, the following three lemmas contain different tail bounds for the sums of independent indicators.
	\begin{lemma} [Chernoff Bound] 
		Let $X$ be a sum of independent random indicators with mean $\mu$. Then, for any $\delta \geq 0$:
		$$\mathbb{P}(X > (1+\delta)\mu) \leq \exp\left( -\frac{\mu \delta^2}{2+\delta}\right)$$
        $$\mathbb{P}(X < (1-\delta)\mu) \leq \exp\left( -\frac{\mu \delta^2}{2}\right)$$
		\label{lem-chernoff}
	\end{lemma}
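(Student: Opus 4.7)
The plan is to use the classical Chernoff--Cram\'er exponential moment method: apply Markov's inequality to $e^{tX}$ for a well-chosen $t$, then optimize over $t$. Write $X = \sum_{i=1}^N X_i$ where the $X_i$ are independent $\{0,1\}$-indicators with $\mathbb{E}X_i = p_i$ and $\sum_i p_i = \mu$. For the upper tail, fix $t > 0$; by Markov's inequality,
\[
\mathbb{P}(X > (1+\delta)\mu) \;\le\; e^{-t(1+\delta)\mu} \, \mathbb{E}[e^{tX}].
\]
Independence factorizes the moment generating function as $\mathbb{E}[e^{tX}] = \prod_i (1 + p_i(e^t - 1))$, and using $1+x \le e^x$ on each factor gives the clean bound $\mathbb{E}[e^{tX}] \le \exp(\mu(e^t - 1))$. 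Plugging in and choosing $t = \ln(1+\delta)$ (the minimizer of the resulting exponent) collapses the estimate to
\[
\mathbb{P}(X > (1+\delta)\mu) \;\le\; \left(\frac{e^{\delta}}{(1+\delta)^{1+\delta}}\right)^{\mu}.
\]

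For the lower tail I would run the symmetric argument with $t < 0$ (equivalently, apply Markov to $e^{-tX}$ for $t > 0$), obtaining $\bigl(e^{-\delta}/(1-\delta)^{1-\delta}\bigr)^{\mu}$ for $0 \le \delta < 1$. At this point the proof is reduced to two purely analytic inequalities, obtained by taking logarithms of the bounds above: $(1+\delta)\ln(1+\delta) - \delta \ge \delta^2/(2+\delta)$ for $\delta \ge 0$, and $(1-\delta)\ln(1-\delta) + \delta \ge \delta^2/2$ for $0 \le \delta < 1$. Each of these is a standard one-variable estimate, provable either by Taylor-expanding the logarithm around $\delta = 0$ and comparing coefficients, or by checking that the difference of the two sides vanishes at $\delta = 0$ and has nonnegative derivative on the relevant interval.

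The ``hard part'' here is essentially cosmetic: the main mechanics (Markov, the $1+x \le e^x$ factorization, and the optimizing choice of $t$) are entirely routine, and the only place where care is needed is in the final analytic tightening that converts the multiplicative bound into the clean $\exp(-\mu\delta^2/(2+\delta))$ and $\exp(-\mu\delta^2/2)$ forms stated. Since these inequalities are standard and tight, no new idea is required beyond verifying them on the stated ranges of $\delta$.
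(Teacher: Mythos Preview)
Your argument is the standard Chernoff--Cram\'er proof and is correct as written. The paper does not actually prove this lemma; it simply states it as a well-known preliminary (alongside the other standard tail bounds in the appendix), so there is nothing to compare against---your proposal is exactly the textbook derivation one would cite here.
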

	\begin{lemma} 
		For any binomial random variable $X$ with parameters $k, p$, we can bound the probability that it exceeds $M$ for any $M> kp$:
		$$\mathbb{P}(X > M) \leq \exp\left( - k D\left(\frac{M}{k} \mid \mid p\right)\right)$$
		
		where $D\left(a \mid \mid p\right) = a \log \frac{a}{p} + (1-a) \log \frac{1-a}{1-p}$.
		\label{lem-bin-tight}
	\end{lemma}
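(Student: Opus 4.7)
The plan is to apply the standard exponential-moment (Chernoff) method and then optimize over the exponential parameter. First, for any $t > 0$, Markov's inequality applied to $e^{tX}$ gives
\[
\mathbb{P}(X > M) \;\le\; \mathbb{P}(X \ge M) \;\le\; \frac{\mathbb{E}[e^{tX}]}{e^{tM}}.
\]
Since $X \sim \mathrm{Bin}(k,p)$ is a sum of $k$ independent Bernoulli$(p)$ variables, its moment generating function factorizes as $\mathbb{E}[e^{tX}] = (pe^t + 1-p)^k$. Substituting, the bound becomes $\mathbb{P}(X > M) \le \bigl((pe^t + 1-p)\,e^{-tM/k}\bigr)^k$ for every $t > 0$.

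Next I would optimize the choice of $t$. Taking the logarithm and differentiating in $t$, the first-order condition reads $pe^t/(pe^t + 1-p) = M/k$. Writing $a = M/k$ and solving yields the optimizer
\[
e^{t^*} \;=\; \frac{a(1-p)}{p(1-a)},
\]
and the hypothesis $M > kp$ (equivalently $a > p$) guarantees $t^* > 0$, so this choice is admissible for Markov's inequality.

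Finally, substituting $t = t^*$ collapses $pe^{t^*} + 1 - p$ to $(1-p)/(1-a)$ via the identity $a + (1-a) = 1$, and a short calculation reduces the bound to
\[
\mathbb{P}(X > M) \;\le\; \left(\frac{p}{a}\right)^{ka}\left(\frac{1-p}{1-a}\right)^{k(1-a)} \;=\; \exp\bigl(-k\, D(a \mid\mid p)\bigr),
\]
which is exactly the claimed inequality. This is a textbook Chernoff–Hoeffding estimate, so there is no real analytical obstacle; the only mildly fiddly step is carrying the algebra cleanly through the substitution of $t^*$ to see that the expression collapses into the stated KL-divergence form.
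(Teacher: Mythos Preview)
Your argument is correct and is exactly the standard Chernoff--Hoeffding derivation. The paper itself does not supply a proof of this lemma; it is listed among the probability preliminaries as a known tail bound and simply stated. So there is nothing to compare against, and your textbook exponential-moment approach is the canonical way to establish it.
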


\begin{lemma}
	\label{lem:pt-lowerbound}
	Let $X= \sum_{i=1}^{k}I_i$ be the sum of $k$ independent indicators with probabilities $\mathbb{P}(I_i) = p_i \in (0, 1)$. Let $\mu = \mathbb{E}X$, and let $t_1, t_2$ be integer values such that $t_1 \geq \lceil \mu \rceil$ and $t_2 > t_1$. Then, 
	
	$$\frac{\mathbb{P}(X\geq t_1)}{\mathbb{P}(X \geq t_2)} > \exp\left(\frac{(t_2-\lceil \mu \rceil)^2 -(t_1-\lfloor \mu \rfloor)^2}{2t_2}\right) > \exp\left(\frac{(t_2-t_1)^2}{t_2}\right)$$
\end{lemma}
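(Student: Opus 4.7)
The plan is to reduce the tail ratio to a product of consecutive pointwise ratios using log-concavity of the Poisson binomial PMF, and then bound each pointwise ratio $q_j/q_{j-1}$ by the clean closed-form $\mu/j$. Write $q_m = \mathbb{P}(X=m)$ and $\rho_m := q_m/q_{m-1}$. The Poisson binomial PMF is log-concave (a classical fact for sums of independent Bernoullis), so $\rho_m$ is non-increasing in $m$, and because $t_1 \geq \lceil \mu \rceil$ is past the mode we have $\rho_m \leq 1$ for $m > t_1$. Expanding $\mathbb{P}(X \geq t) = q_t(1 + \rho_{t+1} + \rho_{t+1}\rho_{t+2} + \cdots)$ and observing $\rho_{t_2+i} \leq \rho_{t_1+i}$ for every $i \geq 1$, a termwise comparison would yield
$$\frac{\mathbb{P}(X \geq t_1)}{\mathbb{P}(X \geq t_2)} \;\geq\; \frac{q_{t_1}}{q_{t_2}} \;=\; \prod_{j=t_1+1}^{t_2} \frac{1}{\rho_j}.$$

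Next, to bound each $\rho_j$, I would compute moments of $q_m$ through a single indicator: writing $X = I_i + X_{-i}$ and summing $\mathbb{E}[I_i\, \mathbbm{1}(X=m)]$ and $\mathbb{E}[(1-I_i)\mathbbm{1}(X=m)]$ over $i$ gives the identity
$$\rho_j \;=\; \frac{k-j+1}{j}\cdot\frac{\bar p_{j-1}}{1-\bar p_{j-1}}, \quad \bar p_{j-1} \;:=\; \frac{\sum_i p_i\,\mathbb{P}(X_{-i}=j-1)}{\sum_i \mathbb{P}(X_{-i}=j-1)}.$$
A pairwise swap $\mathbb{P}(X_{-i}=m) - \mathbb{P}(X_{-l}=m) = (p_l - p_i)\bigl(\mathbb{P}(Y=m-1)-\mathbb{P}(Y=m)\bigr)$, where $Y := X - I_i - I_l$, together with log-concavity of $Y$ (whose mode lies at most at $\lceil \mathbb{E}Y\rceil \leq \lceil \mu \rceil$), shows that for $j-1$ past that mode the weights $w_i := \mathbb{P}(X_{-i}=j-1)$ are anti-sorted against $p_i$. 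Chebyshev's sum inequality then gives $\bar p_{j-1} \leq \mu/k$, and substituting back (using $j-1 \geq \mu$) yields $\rho_j \leq \mu/j$.

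Combining these,
$$\log\frac{\mathbb{P}(X \geq t_1)}{\mathbb{P}(X \geq t_2)} \;\geq\; \sum_{j=t_1+1}^{t_2}\log\frac{j}{\mu} \;\geq\; \sum_{j=t_1+1}^{t_2}\frac{j-\mu}{j} \;\geq\; \frac{1}{t_2}\sum_{j=t_1+1}^{t_2}(j-\mu) \;=\; \frac{(t_2-t_1)(t_1+t_2+1-2\mu)}{2t_2},$$
using $\log(1+x)\geq x/(1+x)$ and $j \leq t_2$ with $j-\mu \geq 0$. A short case analysis (integer vs.\ non-integer $\mu$) should show that this quantity exceeds $\bigl((t_2-\lceil\mu\rceil)^2 - (t_1-\lfloor\mu\rfloor)^2\bigr)/(2t_2)$, giving the first stated inequality. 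The second inequality is then a purely algebraic comparison of the two exponents.

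\textbf{Main obstacle.} The delicate point is showing $\bar p_{j-1} \leq \mu/k$. The identity for $\rho_j$ follows cleanly from two moment identities, but the anti-sortedness of the weights against $(p_i)$ relies on log-concavity of each reduced sum $Y = X - I_i - I_l$ to sign $\mathbb{P}(Y=m-1) - \mathbb{P}(Y=m)$ correctly for every pair $(i,l)$. The edge cases where $j$ is only just past $\lceil \mu \rceil$ (so that $j-2$ sits only just at the mode of $Y$) will need separate handling near $j = \lceil \mu \rceil + 1$.
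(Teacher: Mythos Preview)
Your skeleton matches the paper's almost exactly: bound the consecutive ratios $\rho_j=q_j/q_{j-1}$, telescope, and pass from point masses to tails. The difference is entirely in how you bound $\rho_j$.

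The paper does not attempt $\rho_j\le\mu/j$. Instead it quotes the \emph{ultra}-log-concavity bound of D\"umbgen et al., $\rho_{c+1}<\frac{c}{c+1}\rho_c$ (equivalently, $j\rho_j$ is decreasing), and combines it with $\rho_{\lceil\mu\rceil+1}\le 1$ from Darroch's mode location to get $\rho_j<(\lceil\mu\rceil+1)/j$ in one line. After the same telescoping and $1-x\le e^{-x}$ step you use, this yields the stated exponent directly. The paper's passage from point masses to tails is also slightly different from yours (it uses monotonicity of the ratio as $t_1$ slides with $t_2-t_1$ fixed, then sums), but your series comparison is at least as clean.

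Your Chebyshev route to the sharper $\rho_j\le\mu/j$ is where the real gap sits, and you have correctly located it. The pairwise identity $w_i-w_l=(p_l-p_i)\bigl(\mathbb{P}(Y=j-2)-\mathbb{P}(Y=j-1)\bigr)$ is fine, but anti-sortedness of $(w_i)$ against $(p_i)$ requires $\mathbb{P}(Y=j-2)\ge\mathbb{P}(Y=j-1)$ for \emph{every} pair $(i,l)$, and when $j=\lceil\mu\rceil+1$ and $p_i,p_l$ are both small, $Y$ can have mode $\lceil\mu\rceil=j-1$, flipping the sign. In that regime Chebyshev goes the wrong way for those pairs, and you have no mechanism to show the ``good'' pairs dominate. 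This is not obviously patchable without additional work; note, for instance, that the equivalent formulation $\mu q_{j-1}-jq_j=\sum_i p_i^2\bigl(\mathbb{P}(X_{-i}=j-2)-\mathbb{P}(X_{-i}=j-1)\bigr)$ already requires control of the mode of each $X_{-i}$, which fails for small $p_i$ for exactly the same reason.

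Since the target inequality only needs $\rho_j\le(\lceil\mu\rceil+1)/j$, the clean fix is to drop the Chebyshev argument and invoke ultra-log-concavity (the D\"umbgen inequality) instead; your telescoping and tail comparison then go through unchanged, and your final arithmetic already shows $(t_2-t_1)(t_1+t_2+1-2\mu)\ge (t_2-\lceil\mu\rceil)^2-(t_1-\lfloor\mu\rfloor)^2$ in both the integer and non-integer cases.
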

\begin{proof}
Duembgen et al~\cite{dumbgen2020density} gives a bound on the ratio of two consecutive probabilities. For any $c$ with $\mathbb{P}(X=c-1)>0$, 
	\begin{equation}
	\label{eq:duembgen}
	\frac{\mathbb{P}(X=c+1)}{\mathbb{P}(X=c)} < \frac{c}{c+1}\frac{\mathbb{P}(X=c)}{\mathbb{P}(X=c-1)}
	\end{equation}
	
	The mode of $X$ is either at $\lfloor \mathbb\mu\rfloor$, $\lceil \mu \rceil$, or is equally attained at both~\cite{darroch1964distribution}. The probablity increases monotonically from $X=0$ up to the mode(s) and then decreases montonically up to $X=k$. 
	
	So, we have for any integer $m > \mu$:
	\begin{equation}
	\label{eq:darroch}
	\frac{\mathbb{P}(X=m+1)}{\mathbb{P}(X=m)}< 1
	\end{equation}
	Using Equation~\ref{eq:duembgen} and~\ref{eq:darroch}, for any integer $s \geq \lceil \mu \rceil$
	\begin{align*}
	\frac{\mathbb{P}(X = s+1)}{\mathbb{P}(X =s)} &< \frac{s}{s+1} \frac{\mathbb{P}(X = s)}{\mathbb{P}(X = s-1)}
	& <\dots
	&<\frac{\lceil\mu\rceil+1}{s+1} \frac{\mathbb{P}(X=\lceil\mu\rceil+1)}{\mathbb{P}(X = \lceil\mu\rceil)}<\frac{\lceil\mu\rceil+1}{s+1}
	\end{align*}
	We can rewrite the ratio of $\mathbb{P}(X = t_2)$ and $\mathbb{P}(X = t_1)$ as the product of ratios with a difference of 1:
	$$\frac{\mathbb{P}(X = t_2)}{\mathbb{P}(X = t_1)}  = \frac{\mathbb{P}(X = t_2)}{\mathbb{P}(X = t_2- 1)}\frac{\mathbb{P}(X = t_2-1)}{\mathbb{P}(X = t_2 - 2)} \cdots \frac{\mathbb{P}(X = t_1+1)}{\mathbb{P}(X = t_1)}$$
	Substituting the bound above:
	$$\frac{\mathbb{P}(X = t_2)}{\mathbb{P}(X = t_1)} < \frac{\left(\lceil\mu\rceil+1\right)^{(t_2-t_1)}}{t_2(t_2-1)(t_2-2) \dots (t_1 + 1)} =\prod_{s=t_1+1}^{t_2}\left(1-\frac{s-\lceil\mu\rceil - 1}{s}\right)$$

	Using the approximation $1-x\leq e^{-x}$, this is at most:
	$$\leq \exp\left(-\sum_{s=t_1+1}^{t_2} \frac{s-\lceil\mu\rceil - 1}{s}\right)\leq \exp\left(-\frac{\sum_{s=t_1}^{t_2} s- \lceil \mu \rceil - 1}{t_2}\right)$$
	Expanding the sum in the numerator:
 \begin{align*}
	&=\exp\left(-\frac{\sum_{s=0}^{t_2-\lceil \mu \rceil - 1} s -\sum_{s=0}^{t_1-\lceil \mu \rceil - 1}s}{t_2}\right) \\
 &= \exp\left(-\frac{(t_2-\lceil \mu \rceil -1)(t_2 - \lceil\mu\rceil) -(t_1-\lceil \mu \rceil -1)(t_1 - \lceil\mu\rceil) }{2t_2}\right) \\
	&\leq \exp\left(-\frac{(t_2-\lceil \mu \rceil)^2 -(t_1-\lfloor \mu \rfloor)^2}{2t_2}\right)\\
\end{align*}
	This ratio decreases as $t_1$ increases and $t_2 - t_1$ remains constant. This means that, for any $i > t_2$, we have $\mathbb{P}(X = i)<\exp\left(-\frac{(t_2-\lceil \mu \rceil)^2 -(t_1-\lfloor \mu \rfloor)^2}{2t_2}\right)\mathbb{P}(X = i - (t_2 - t_1))$
	
	\begin{align*}
	\mathbb{P}(X \geq t_2) &= \sum_{i=\mu + t_2 \sqrt{\mu}}^{k}\mathbb{P}(X = i)\\
	&< \sum_{i=\mu + t_2 \sqrt{\mu}}^{k}\exp\left(-\frac{(t_2-\lceil \mu \rceil)^2 -(t_1-\lfloor \mu \rfloor)^2}{2t_2}\right)\mathbb{P}(X = i - (t_2 - t_1)) \\
	&\leq \exp\left(-\frac{(t_2-\lceil \mu \rceil)^2 -(t_1-\lfloor \mu \rfloor)^2}{2t_2}\right)\mathbb{P}(X \geq t_1)\\
	\end{align*}
	Expanding $(t_2-\lceil \mu \rceil)^2 -(t_1-\lfloor \mu \rfloor)^2$, we get $t_2^2 - t_1^2 - 2\lceil \mu \rceil(t_2 - t_1) = (t_2 - t_1)(t_2 + t_1 - 2\lceil \mu \rceil)$. Since $t_2 > t_1 \geq \lceil \mu \rceil$ by assumption, this exceeds $2(t_2 - t_1)^2$. 
	
\end{proof}

\end{document}